\newcommand{\BlackBox}{\rule{1.5ex}{1.5ex}}  % end of proof
\newenvironment{proof}{\par\noindent{\bf Proof\ }}{\hfill\BlackBox\\[2mm]}
\newtheorem{example}{Example} 
\newtheorem{theorem}{Theorem}
\newtheorem{lemma}[theorem]{Lemma} 
\newtheorem{proposition}[theorem]{Proposition} 
\newtheorem{remark}[theorem]{Remark}
\newtheorem{corollary}[theorem]{Corollary}
\newtheorem{definition}[theorem]{Definition}
\title{A Wasserstein distance approach for concentration of \\ empirical risk estimates}
\author{
	Prashanth L. A.\\
{\normalsize Indian Institute of Technology Madras}\\
{\normalsize \texttt{prashla@cse.iitm.ac.in}}
	\and 
	Sanjay P. Bhat\\
{\normalsize Tata Consultancy Services Limited} \\
{\normalsize \texttt{sanjay.bhat@tcs.com}}
}
\date{}
\begin{document}
	
	\maketitle

	\begin{abstract}
		This paper presents a unified approach based on Wasserstein distance to derive concentration bounds for empirical estimates for two  broad classes of risk measures defined in the paper. The classes of risk measures introduced include as special cases  well known risk measures from the finance literature such as conditional value at risk (CVaR), optimized certainty equivalent risk, spectral risk measures, utility-based shortfall risk, cumulative prospect theory (CPT) value, rank dependent expected utility and distorted risk measures. Two estimation schemes are considered, one for each class of risk measures. One estimation scheme involves applying the risk measure to the empirical distribution function formed from a collection of i.i.d. samples of the random variable (r.v.), while the second scheme involves applying the same procedure to a truncated sample. The bounds provided apply to three popular  classes of distributions, namely sub-Gaussian, sub-exponential and heavy-tailed distributions. The bounds are derived by first relating the estimation error to the Wasserstein distance between the true and empirical distributions, and then using recent concentration bounds for the latter. Previous concentration bounds are available only for specific risk measures such as CVaR and CPT-value. The bounds derived in this paper are shown to either match or improve upon previous bounds in cases where they are available. The usefulness of the bounds is illustrated through an algorithm and the corresponding regret bound for a stochastic bandit problem involving a general risk measure from each of the two classes introduced in the paper. 
	\end{abstract}

	\section{Introduction}
	\label{sec:introduction}
	Concentration of sample averages has received a lot of attention in statistics. Sample averages are usually used for estimating the expectation of a random variable (r.v.), and classic inequalities, such as those of Hoeffding and Bernstein, provide the necessary concentration bounds. However, the expected value has several shortcomings in the context of risk-sensitive optimization \citep{allais53,ellsberg61,kahneman1979prospect,rocka}, and several measures have been proposed in the literature to capture the notion of risk in a practical application. Unlike the case of expected value, concentration bounds are either not available, or not optimal, for the estimation of several risk measures. 

In this paper, we consider the estimation of the following popular risk measures: optimized certainty equivalent (OCE) risk \citep{bental1986oce} which includes Conditional Value-at-Risk (CVaR) \citep{rocka} as a special case, spectral risk measure (SRM) \citep{acerbi2002spectral}, utility-based shortfall risk (UBSR) \citep{follmer2002convex}, cumulative prospect theory (CPT) \citep{tversky1992advances}, rank-dependent expected utility (RDEU) \citep{quiggin2012generalized} and distorted risk measures (DRM) \citep{denneberg1990distorted}.  
CVaR is popular in financial applications, where it is necessary to minimize the worst-case losses, say in a portfolio optimization context. 
CVaR is a special instance of the class of OCE risk measures \citep{lee2020oce} as well as SRMs \citep{acerbi2002spectral}. CVaR is an appealing risk measure because it is coherent \citep{artzner1999coherent}, and spectral risk measures retain this property. 
UBSR belongs to the family of convex risk measures \citep{follmer2002convex}, which generalizes the class of  coherent risk measures. 
CPT value is a risk measure that is useful for modeling human preferences. RDEU is a closely-related measure, since it shares with CPT the idea of employing a weight function to distort underlying probabilities. 

We provide a novel categorization of risk measures based on their continuity properties in the space of distributions equipped with the Wasserstein metric. To elaborate, we define two broad types of risk measures, say \tone\ and \ttwo, and show that, under suitable conditions, (i) CVaR, SRM and UBSR are \tone; and (ii) CPT, RDEU and DRM are \ttwo. Such a categorization provides a uniform framework for estimating the aforementioned risk measures as well as deriving concentration bounds for the estimates, which is the primary focus of this work. 

The need for concentration bounds for estimation of risk measures is motivated by the fact that, while information about the underlying distribution is typically unavailable in practical applications, one can often obtain i.i.d. samples from the distribution. Our aim is to estimate the chosen risk measure using these samples, and derive concentration bounds for these estimates. We consider this problem of estimation in the broader context of \tone\ and \ttwo\ risk measures. In the former case, we examine the  estimator obtained by applying the risk measure to the empirical distribution constructed from an i.i.d. sample. On the other hand, in the case of a \ttwo\ risk measure, we consider an estimator that is obtained using the truncated empirical distribution.  Our estimator for \tone\ risk measure, when specialized to the case of CVaR, coincides with the one that is already available in the literature. On the other hand, the general \tone\ estimator applied to SRM and UBSR leads to novel estimators. Next, in the case of CPT, the estimator in literature does not involve truncation, while our \ttwo\ estimator does. For the case of RDEU, our estimator is novel, to the best of our knowledge. 

We derive concentration bounds for estimators of \tone\ and \ttwo\ risk measures for three general classes of distributions, namely sub-Gaussian, sub-exponential and heavy-tailed distributions that satisfy a higher-moment bound. We achieve this in a novel manner by relating the estimation error to the Wasserstein distance between the empirical and true distributions, and then using  concentration bounds for the latter. We specialize the bounds obtained for the case of a \tone\ risk measure to provide concentration results for empirical versions of OCE (with CVaR as a special case), SRM and UBSR. We perform a similar exercise to obtain concentration results for CPT and RDEU, using the general result for \ttwo\ risk measures. 

We now summarize our results when the underlying distribution is sub-Gaussian, which is a popular class of distributions with possibly unbounded support.
\begin{enumerate}[label=(\arabic*)]
	\item For the case of CVaR, we provide a tail bound of the order $O\left(\exp\left(-c n \epsilon^2\right)\right)$, where $n$ is the number of samples, $\epsilon$ is the accuracy parameter, and $c$ is a universal constant. Our bound matches the rate obtained for distributions with bounded support in \citet{brown2007large}, and features improved dependence on $\epsilon$ as compared to the one derived for sub-Gaussian distributions in \citet{Kolla}. Further, unlike our results, the latter work imposes a minimum growth assumption on the underlying distribution. 
	\item Tail bounds of the order $O\left(\exp\left(-c n \epsilon^2\right)\right)$ are shown to hold for any SRM having a bounded risk spectrum, and any UBSR with a Lipschitz utility function. Unlike CVaR, the estimators as well as the concentration results for SRM and UBSR are novel. A trapezoidal rule-based estimator has been proposed for SRM in \cite{pandey2019estimation}, and the bounds there are under more stringent assumptions as compared to the one we derive for SRM.  
	\item For the case of CPT-value, we obtain an order $O\left(\exp\left(-c n \epsilon^2\right)\right)$ bound for the case of distributions with bounded support, matching the rate in \citet{prashanth2018tac}. For the case of sub-Gaussian distributions, we provide a bound that has an improved dependence on the number of samples $n$, as compared to the corresponding bound derived by \citet{prashanth2018tac}.
	Similar bounds are shown to hold for any RDEU involving a utility function with derivative bounded above and bounded away from 0. 
	\item The results outlined above for various risk measures in the sub-Gaussian case rely on a concentration bound on the Wasserstein distance between empirical and true distributions, which we derive. This bound, with explicit constants, may be of independent interest.
	\item As a minor contribution, our concentration bounds open avenues for bandit applications, and we illustrate this claim by considering a risk-sensitive bandit setting, with any \tone/\ttwo\ risk measure governing the objective. We use the concentration bounds for \tone/\ttwo\ risk measures to derive regret bounds for such a bandit problem in two cases: first, when the underlying arms' distribution is assumed to be sub-Gaussian and the risk measure is \tone, and second, when the arms' distribution has bounded support and the risk measure is \ttwo.   Previous works (cf. \citet{galichet2015thesis,aditya2016weighted}) consider CVaR and CPT optimization in a bandit context, with arms' distributions having bounded support. In contrast, we consider unbounded, albeit sub-Gaussian distributions in the broader \tone\ class of risk measures.
\end{enumerate}

\begin{table}
	\caption{Summary of the bounds on $\Prob{ \left|\rho_{n} - \rho(X)\right| > \epsilon}$, for various choices of risk measure $\rho(X)$. Here $X$ is either a bounded, or sub-Gaussian, or sub-exponential r.v., and $\rho_n$ is an estimate of $\rho(X)$ using $n$ i.i.d. samples. The corollaries accompanied by a $\ast$ symbol feature improved bounds in comparison to those in the literature. }
	\label{tab:summary}
	\centering
	\begin{tabular}{c|c|c|c}
		\toprule
		\textbf{Risk measure} & \textbf{Bounded support} & \textbf{Sub-Gaussian} & \textbf{Sub-exponential}\\\midrule
		Conditional  & \citep{wang2010deviation} & \citep{prashanth2019concentration} & \citep{prashanth2019concentration}\\
		Value-at-Risk&  & Corollaries \ref{prop:oce-t1-conc} and \ref{cor:oce-subgauss} & Corollary \ref{cor:cvar-subexp}\\\midrule
		Spectral risk  & \citep{pandey2019estimation} & \citep{pandey2019estimation} & \citep{pandey2019estimation}\\
		measure &&Corollaries \ref{specconcprop} and \ref{cor:spec-subgauss}\ $\ast$ &Corollary \ref{cor:spec-subexp}\ $\ast$\\\midrule
		Utility-based & Corollary \ref{cor:ubsr-subgauss} & Corollaries \ref{ubsrconcprop} and \ref{cor:ubsr-subgauss} & Corollary \ref{cor:ubsr-subexp}\\
		shortfall risk &&&\\\midrule
		Cumulative prospect   & \citep{prashanth2018tac}&  \citep{prashanth2018tac}& Corollary \ref{cor:cpt-subexp}\\
		theory &Corollary \ref{cor:cpt-bounded}&Corollaries \ref{cor:cpt-subgauss} and \ref{cor:cpt-subgauss-const}\ $\ast$&\\\midrule
		Rank-dependent  & \multicolumn{3}{c}{\multirow{2}{*}{Bounds for CPT apply in light of Lemma \ref{lemma:rdeu-t2}}}\\
		expected utility  \\\bottomrule
	\end{tabular}
\end{table}

In addition, we also derive concentration bounds for the five risk measures mentioned above, for the case when the underlying distribution is either sub-exponential or heavy-tailed, but satisfying a higher moment bound. To the best of our knowledge, barring CVaR and SRM, tail bounds are not available for the aforementioned classes of distributions for the other risk measures.  

Table \ref{tab:summary} provides a risk-measure-wise summary of the bounds presented in this paper for various classes of distributions along with the relevant previous works in parentheses. After characterizing the two types of risk measures in Section \ref{sec:risk-abstract}, and establishing the type for several popular risk measures, we provide a map of our results in Section \ref{sec:results}   for various risk measures under different assumptions on the underlying distribution through Tables \ref{tab:summary-expec}--\ref{tab:summary-subexp}. In particular, these tables list bounds for risk estimation that hold in expectation (under a bounded higher moment condition), and concentration bounds for sub-Gaussian and sub-exponential distributions, respectively.

\textit{Related work.}
Concentration of empirical CVaR has been the topic of many recent works, cf. \cite{brown2007large,wang2010deviation,thomas2019concentration,Kolla,kagrecha2019distribution,prashanth2019concentration}. The first three references address the case when the underlying distribution has bounded support, while the rest consider unbounded distributions which are either sub-Gaussian or sub-exponential or have a bounded higher-moment. SRM estimation has been considered in \cite{pandey2019estimation}, where the authors provide tail bounds for the case of distributions that have either bounded support, or are sub-Gaussian/sub-exponential. In comparison to these works, the bounds that we derive are under less stringent assumptions. For instance, in \cite{prashanth2019concentration}, the authors require a minimum growth condition on the underlying distribution, while in \cite{pandey2019estimation}, the authors require that the underlying density be bounded above zero, or follow the Gaussian/exponential form. 
While estimation of UBSR has been considered in \cite{dunkel2010stochastic} and \cite{hu2018utility}, concentration bounds for UBSR has not been addressed in previous works, to the best of our knowledge. In the first of these two references, the authors present a stochastic approximation based method for UBSR estimation, while in the second reference, the authors propose a sample-average approximation for UBSR. In both works, there are no non-asymptotic bounds. Instead, an asymptotic normality result is provided. Finally, CPT estimation is the topic of \cite{prashanth2016cumulative} and \cite{prashanth2018tac}. Our estimate for \ttwo\ measures resembles the one in the aforementioned references, except that we employ truncation, which aids in arriving at better concentration results for the sub-Gaussian case. Concentration bounds for empirical versions of RDEU are not available in the literature, to the best of our knowledge, and we fill this gap.

Since CVaR and spectral risk measures are weighted averages of the underlying distribution quantiles, a natural alternative to a Wasserstein-distance-based approach is to employ concentration results for quantiles such as in \cite{Kolla}. While such an approach can provide bounds with better constants, the resulting bounds also involve distribution-dependent quantities (see \cite{prashanth2019concentration}, for instance), and require different proofs for sub-Gaussian and sub-exponential r.v.s. In contrast, our approach provides a unified method of proof.  

In \cite{cassel2018general}, the authors consider a general risk measure along with an abstract norm on the space of distributions such that the risk measure satisfies a polynomial growth bound with respect to the norm, and the norm difference between the empirical and true distributions satisfies a given concentration inequality. There are thus parallels between how continuity properties of the risk measure and concentration inequalities for empirical distributions are combined in \cite{cassel2018general} and in this paper. 
Our definition of \tone\ risk measures is also motivated by the stability criteria in the aforementioned reference.  However, there are some key differences. First, instead of an abstract norm, we work with the Wasserstein metric, and impose a \holdernosp-continuity requirement on the risk measure in the space of distributions. Second, the concentration bounds we derive easily specialize to SRM and UBSR — two risk measures not considered there. Third, CPT and RDEU risk measures do not satisfy the stability criteria of \cite{cassel2018general}. We address this gap by providing the \ttwo\ class of risk measures, which
CPT/RDEU belong to. Fourth, the requirement $2$ in Definition 3 of \cite{cassel2018general}, which specifies the stability criteria,  relates to the concentration of empirical distribution, and is usually satisfied for sub-Gaussian distributions. We study concentration of empirical risk estimates for sub-Gaussian distributions as well as for the more general classes of sub-exponential and heavy-tailed distributions. Note that the aforementioned requirement in \cite{cassel2018general} does not hold for sub-exponential and higher-moment bounded distributions under standard metric such as the sup-norm and the Wasserstein distance. 

The rest of the paper is organized as follows:
In Section \ref{sec:prelims}, we cover background material on Wasserstein distance, and state concentration bounds on Wasserstein distance between empirical and true distribution functions under different assumptions on the tail of the underlying distribution. In Section \ref{sec:risk-abstract}, we define two types of risk measures, and establish the type for five popular risk measures. In Section \ref{sec:results}, we provide a map of the results in Sections \ref{sec:expec-bounds}--\ref{sec:heavytailed}.
In Section \ref{sec:expec-bounds}, we provide bounds in expectation for risk estimation.
In Section \ref{sec:subgauss}, we present concentration bounds for the two general types of risk measures for the case when the underlying distribution is sub-Gaussian. In Sections \ref{sec:subexp} and \ref{sec:heavytailed}, we provide concentration bounds for the cases of sub-exponential and heavy-tailed distributions, respectively. In Section \ref{sec:bandits}, we discuss bandit applications. 
Finally, in Section \ref{sec:conclusions}, we provide the concluding remarks.

	\section{Wasserstein Distance}
	\label{sec:prelims}
	In this section, we introduce the notion of Wasserstein distance, a popular metric for measuring the proximity between two distributions. The reader is referred to Chapter 6 of \citet{villani2008optimal} for a detailed introduction. 

Given two cumulative distribution functions (CDFs) $F_{1}$ and $F_{2}$ on $\R$, let $\Gamma(F_{1},F_{2})$ denote the set of all joint distributions on $\R^{2}$ having $F_{1}$ and $F_{2}$ as marginals. 
\begin{definition}
	\label{definition:wasserstein}
	Given two CDFs $F_1$ and $F_2$ on $\R$ and $p\ge 1$, the $p$-Wasserstein distance between them is defined by
	\begin{align}
		W_{p}(F_1,F_2) \triangleq \inf_{F\in\Gamma(F_{1},F_{2})} \left[\int_{\R^2} |x-y|^p dF(x,y)\right]^{\frac{1}{p}}.
	\end{align}
\end{definition}
In this paper, we will mostly restrict ourselves to the case $p=1$. For convenience, we shall refer to the $1$-Wasserstein distance as `the Wasserstein distance'.

Given $L>0$ and $\kappa>0$, a function $f:\R\rightarrow \R$ is $L$-H\"{o}lder of order $\kappa$ if $|f(x)-f(y)|\leq L|x-y|^{\kappa}$ for all $x,y\in\R$. The function $f:\R\rightarrow \R$ is $L$-Lipschitz if it is $L$-H\"{o}lder of order $1$. Finally, if $F$ is a CDF on $\R$, we define the generalized inverse $F^{-1}:[0,1]\rightarrow \R$ of $F$ by $F^{-1}(\beta)=\inf\{x\in\R: F(x)\geq \beta\}$. In the case where $F$ is strictly increasing and continuous, $F^{-1}$ equals the usual inverse of a bijective function. 

The following lemma provides alternative characterizations of the Wasserstein distance between two CDFs.
\begin{lemma}
	\label{lemma:lipschitz-wasserstein}
	Suppose $X$ and $Y$ are r.v.s having CDFs $F_{1}$ and $F_{2}$, respectively. Then,
	\begin{equation}
		\sup \left|\E(f(X) - \E(f(Y))\right|=W_{1}(F_1,F_2)=\int_{-\infty}^{\infty}|F_{1}(s)-F_{2}(s)|\mathrm{d}s=\int_{0}^{1}|F_{1}^{-1}(\beta)-F_{2}^{-1}(\beta)|\mathrm{d}\beta,\label{lipwass2}
	\end{equation}
	where the supremum is over all functions $f:\R\rightarrow\R$ that are $1$-Lipschitz. 
\end{lemma}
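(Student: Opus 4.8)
The plan is to establish the four-way equality by proving a cycle of inequalities, which forces all four quantities to coincide and sidesteps having to verify directly that any particular coupling is optimal. Write $A$ for the supremum on the left-hand side, $B=W_1(F_1,F_2)$, $C=\int_{-\infty}^{\infty}|F_1(s)-F_2(s)|\,ds$, and $D=\int_0^1|F_1^{-1}(\beta)-F_2^{-1}(\beta)|\,d\beta$. I would prove $A\le B$, then $B\le D$, then $D=C$, and finally $C\le A$, closing the loop $A\le B\le D=C\le A$. Throughout I would assume these quantities are finite, i.e., that $F_1,F_2$ have finite first moments, since otherwise the equalities hold trivially in the extended reals and $A$ itself is not finitely defined. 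The bound $A\le B$ is immediate: for any $1$-Lipschitz $f$ and any joint law $F\in\Gamma(F_1,F_2)$, the pointwise estimate $|f(x)-f(y)|\le|x-y|$ gives $|\E(f(X))-\E(f(Y))|=\big|\int_{\R^2}(f(x)-f(y))\,dF\big|\le\int_{\R^2}|x-y|\,dF$, and taking the infimum over $F$ and the supremum over $f$ yields the claim. For $B\le D$, I would exhibit the comonotone coupling as a feasible competitor: with $U$ uniform on $[0,1]$, the Galois relation $F_i^{-1}(u)\le x\iff u\le F_i(x)$ shows $F_i^{-1}(U)$ has CDF $F_i$, so the law of $(F_1^{-1}(U),F_2^{-1}(U))$ belongs to $\Gamma(F_1,F_2)$ and has transport cost $\E|F_1^{-1}(U)-F_2^{-1}(U)|=D$.

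The identity $D=C$ is a change-of-variables computation. I would start from the elementary fact $|a-b|=\int_{\R}|\mathbf 1\{a>s\}-\mathbf 1\{b>s\}|\,ds$ applied with $a=F_1^{-1}(\beta)$ and $b=F_2^{-1}(\beta)$, then rewrite the indicators using the Galois relation in the form $F_i^{-1}(\beta)>s\iff F_i(s)<\beta$. Swapping the order of integration by Tonelli (the integrand is nonnegative) reduces $D$ to $\int_{\R}\big(\int_0^1|\mathbf 1\{F_1(s)<\beta\}-\mathbf 1\{F_2(s)<\beta\}|\,d\beta\big)\,ds$, and since $F_1(s),F_2(s)\in[0,1]$ the inner integral equals $|F_1(s)-F_2(s)|$, giving $C$.

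The remaining step $C\le A$ is the one I expect to be the crux, and I would handle it by constructing a near-extremal test function. Set $g(s)=\mathrm{sign}(F_2(s)-F_1(s))$ and $f(s)=\int_0^s g(u)\,du$; then $f$ is $1$-Lipschitz because $|g|\le1$. Integration by parts gives $\E(f(X))-\E(f(Y))=\int_{\R}f\,d(F_1-F_2)=-\int_{\R}g(s)(F_1(s)-F_2(s))\,ds=\int_{\R}|F_2(s)-F_1(s)|\,ds=C$, so that $C\le|\E(f(X))-\E(f(Y))|\le A$. The delicate point is the vanishing of the boundary terms $f(s)(F_1(s)-F_2(s))$ as $s\to\pm\infty$: although $f$ may grow linearly, finiteness of the first moments forces $s(1-F_i(s))\to0$ and $sF_i(s)\to0$ (a standard consequence of monotonicity together with tail integrability of the survival and distribution functions), which kills the boundary contribution and legitimizes the integration by parts. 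Verifying this tail decay carefully is the main technical obstacle; the rest of the argument is routine.
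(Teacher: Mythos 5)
Your proof is correct, but it takes a genuinely different and more self-contained route than the paper. The paper proves only the third equality (equality of the two integral expressions) directly --- via a reduction to the case $F_1\geq F_2$ pointwise and an interchange of the order of integration over the region between the two graphs --- and cites the Kantorovich--Rubinstein theorem and Vallander's result for the first two equalities. You instead close a cycle $A\le B\le D=C\le A$ that proves all four identities from scratch: the comonotone coupling handles $B\le D$, the layer-cake formula plus Tonelli handles $D=C$ (an argument essentially equivalent to the paper's, but avoiding the pointwise max/min reduction), and the explicit extremal test function $f(s)=\int_0^s\mathrm{sign}(F_2(u)-F_1(u))\,du$ handles $C\le A$, which is precisely the step that substitutes for the citation of Kantorovich--Rubinstein. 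What your approach buys is a complete elementary proof of the one-dimensional duality rather than an appeal to the general theorem; what the paper's approach buys is brevity. Two small points worth tightening if you write this up: (i) the boundary-term analysis in the integration by parts can be avoided entirely by writing $f(X)=\int_0^\infty g(u)\indic{u<X}\,\mathrm{d}u-\int_{-\infty}^0 g(u)\indic{X\le u}\,\mathrm{d}u$ and applying Fubini, which yields $\E f(X)-\E f(Y)=\int_{\R}g(u)\left(F_2(u)-F_1(u)\right)\mathrm{d}u=C$ directly under finite first moments; (ii) your standing finiteness assumption is the right one and matches the paper's implicit convention, and the tail decay $sP(|X|>s)\to 0$ you invoke does follow from integrability together with monotonicity of the survival function, so the gap you flag as the ``main technical obstacle'' is genuinely closable by the standard argument.
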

\begin{proof}
	See Section \ref{sec:wass-equiv-proof}.
\end{proof}

For deriving concentration bounds for empirical versions of risk measures, we shall use a bound on the Wasserstein distance between the empirical distribution function (EDF) and the underlying CDF. We first define the EDF of a r.v. $X$ before stating the relevant Wasserstein distance bounds.
Given  i.i.d. samples $X_1,\ldots,X_n$ from the distribution $F$ of a r.v. $X$, the EDF $F_n$ is defined by 
\begin{align}
	F_n\left(x\right) = \frac1{n} \sum_{i=1}^n \indic{X_i \le x}, \ \  \textrm{for all }x\in \R.
	\label{eq:edf}
\end{align}
In the above, $\indic{\cdot}$ denotes the indicator function, i.e., for an event $A$, $\indic{A} = 1$ if $A$ happens, and $\indic{A} = 0$ otherwise.

Concentration bounds on the Wasserstein distance between the EDF of an i.i.d. sample and the underlying CDF from which the sample is drawn, have been derived in \cite{lei2020convergence,weed2019sharp,bolley2007quantitative,boissard2011simple,fournier2015rate}. 
In the following sections, we
shall state  Wasserstein distance bounds for two different classes of unbounded r.v.s.
\subsection{Distributions satisfying an exponential moment bound}
In this section, we state a Wasserstein distance bound assuming that the underlying distribution satisfies an exponential moment bound with an exponent greater than one. Sub-Gaussian distributions are a popular class of distributions that satisfy this assumption. 

\begin{assumption}
\label{ass:c1}
There exist $\beta >1$,  $\gamma> 0$ and $\top>0$ such that $\E\left(\exp\left(\gamma|X|^\beta\right)\right) < \top < \infty$.
\end{assumption}

For the special case of distributions satisfying \ref{ass:c1}, we extract the required Wasserstein concentration bound from  \cite{fournier2015rate}. This result will be used to derive concentration bounds for empirical versions of 
various risk measures in Section \ref{sec:subgauss}. 
\begin{lemma}
	\label{lemma:wasserstein-dist-bound}
	Let $X$ be a r.v. with CDF $F$, and  suppose $X$ satisfies \ref{ass:c1}. Then, for every $\epsilon \ge 0$ and $n\geq 1$, we have
	\[ \Prob{ W_{1}(F_n,F) > \epsilon} \le c_1\left( \exp\left(-c_2 n \epsilon^2\right)\indic{\epsilon\le 1} + \exp\left(-c_3 n \epsilon^{\beta}\right)\indic{\epsilon> 1}\right),\]
	where $c_1, c_2,$ and $c_3$ are constants that depend on the parameters $\beta, \gamma$ and $\top$ specified in \ref{ass:c1}.
\end{lemma}
\begin{proof}
See Section \ref{sec:wasserstein-dist-bound-c1-proof}.
\end{proof}

We next define a sub-Gaussian  r.v., which is a popular sub-class of unbounded r.v.s  satisfying assumption \ref{ass:c1}.
\begin{definition}
	A r.v. $X$ is sub-Gaussian if there exists $\sigma >0$ such that
\begin{align} 
	P( X \ge \epsilon) \le \exp\left(-\frac{\epsilon^2}{2\sigma^2}\right), \textrm{ and } P( X \le -\epsilon) \le \exp\left(-\frac{\epsilon^2}{2\sigma^2}\right), 
	\label{eq:subgauss-1}
\end{align}
for every $\epsilon>0$.
\end{definition}
A sub-Gaussian r.v. $X$ satisfies 
	\begin{align} 
	\E\left(\exp\left(\gamma X^2 \right) \right) \le  2,
	\label{eq:subgauss-expomoment}
\end{align} 
where $\gamma$ is a universal constant multiple of the sub-Gaussianity parameter $\sigma$. Thus, a sub-Gaussian random variable satisfies \ref{ass:c1} with $\beta=\top=2$.

\begin{remark}
Note that our definition of sub-Gaussianity above does not require the r.v. to have mean zero. If the r.v. $X$ has mean zero, then $X$ is sub-Gaussian if and only if
there exists  $\sigma >0$ such that
\[ \E\left(\expo{\lambda X}\right) \le \expo{\dfrac{\lambda^2 \sigma^2}{2}} \text{ for every } \lambda \in \R.\]
The reader is referred to Section 2.5 of \cite{vershynin2018high} for a detailed introduction to sub-Gaussian distributions.
\end{remark}

\begin{remark}
	While sub-exponential distributions also satisfy an exponential moment bound, they do not satisfy \ref{ass:c1}, as the exponent $\beta$  in the sub-exponential case equals one. While Theorem 2 in \citet{fournier2015rate} covers the  case $\beta=1$, the tail bound there is weak as it exhibits a polynomial decay for large deviations. In the next section, we handle the sub-exponential case separately by using a Wasserstein distance bound from the recent work of \citet{lei2020convergence}.
	\label{rem6}
\end{remark}

The following result for sub-Gaussian distributions is an immediate corollary of Lemma \ref{lemma:wasserstein-dist-bound}.
\begin{corollary}
	\label{cor:subgauss-wasserstein-dist-bound}
	Let $X$ be a r.v. with CDF $F$. Suppose that $X$ is sub-Gaussian with parameter $\sigma$. Then, for every $\epsilon \ge 0$ and $n\geq 1$, we have
	\[ \Prob{ W_{1}(F_n,F) > \epsilon} \le c_1 \exp\left(-c_2 n \epsilon^2\right),\]
	where $c_1,$ and $c_2$ are constants that depend on the sub-Gaussianity parameter $\sigma$.
\end{corollary}

	The bound in Corollary \ref{cor:subgauss-wasserstein-dist-bound} does not make the constants explicit, since an expression that makes the dependence of the constants $c_1,c_2$ on the underlying parameters explicit is not available in \citet{fournier2015rate}. From the proof of Theorem 2 there, we were unable to extract the explicit dependence of the constants on the underlying parameters.
	The lack of explicit constants may be of concern in bandit applications. To address this shortcoming, we provide Wasserstein distance bounds with explicit constants for sub-Gaussian r.v.s in the result below. 
	In the next section, we provide such bounds for sub-exponential r.v.s. 

\begin{lemma}\textbf{\textit{(Wasserstein distance bound)}}
	\label{lemma:wasserstein-dist-bound-subgauss}
	Let $X$ be a sub-Gaussian r.v. with parameter $\sigma$. Let $F$ denote the CDF of $X$.  Then, for every $n\geq 1$ and  $\epsilon$ such that $\frac{512\sigma}{\sqrt{n}}<\epsilon < \frac{512\sigma}{\sqrt{n}}+16\sigma\sqrt{\myexp}$, we have
	\[ \Prob{ W_{1}(F_n,F) > \epsilon} \le \exp\left(-  \frac{n}{256\sigma^2\myexp}\left(\epsilon-\frac{512\sigma}{\sqrt{n}}\right)^2\right),\]
	where $\myexp$ is Euler's number.
\end{lemma}
\begin{proof}
	See Section \ref{sec:wasserstein-dist-bound-subgauss}.
\end{proof}

Notice that, unlike Lemma \ref{lemma:wasserstein-dist-bound}, the constants are made explicit in the bound above. 

\begin{remark}
	The bound in the lemma above has a term of the form $\left(\epsilon-\frac{512\sigma}{\sqrt{n}}\right)^2$ inside the exponential function because we first derive a tail bound on the centered error $W_{1}(F_n,F) - \E [W_{1}(F_n,F)]$, and then use $\E [W_{1}(F_n,F)] \le \frac{512\sigma}{\sqrt{n}}$.
	The tail bound uses arguments similar to those employed in establishing the well-known McDiarmid inequality (cf. Theorem 5.1 of \cite{lei2020convergence}), and is of the form
	\[ \Prob{ W_{1}(F_n,F) - \E [W_{1}(F_n,F)] > \tilde\epsilon} \le \exp\left(-\frac{n\tilde{\epsilon}^2}{128  \sigma^2 \myexp}\right), \textrm{ for any } \tilde\epsilon \in (0,16\sigma\sqrt{\myexp}).\]
	The main bound in Lemma \ref{lemma:wasserstein-dist-bound-subgauss} follows by setting $\tilde\epsilon =\left(\epsilon-\frac{512\sigma}{\sqrt{n}}\right)$ in the above inequality, and using $\E [W_{1}(F_n,F)] \le \frac{512\sigma}{\sqrt{n}}$.  The reader is referred to Section \ref{sec:wasserstein-dist-bound-subgauss} for the detailed proof.
\end{remark}

\subsection{Sub-exponential distributions}
The second class of r.v.s that we treat in this paper are sub-exponential. As mentioned in Remark \ref{rem6},   sub-exponential r.v.s satisfy an exponential moment bound with exponent equal to one, and hence these r.v.s do not satisfy \ref{ass:c1}. The reason for treating sub-exponential distributions will be made apparent after presenting the Wasserstein concentration bound below for the sub-exponential case. 
Before presenting this bound, we specify below the condition that characterizes a sub-exponential r.v..

\begin{assumption}
    \label{ass:subexp}
	There exists a $c>0$ such that 
	\begin{equation}	
	\Prob{X > \epsilon} \le \exp(-c\epsilon), \textrm{ and } \Prob{X \le -\epsilon} \le \exp(-c\epsilon),\ \ \forall \epsilon \ge 0.
	\label{subexptail}
\end{equation}
\end{assumption}

A sub-exponential r.v. $X$ satisfies
	\begin{equation}	
		\E\left(\exp\left(c'|X|\right)\right) \le 2, \label{eq:subexpdef}
	\end{equation}
where $c'$ is an universal constant multiple of the constant $c$ from \ref{ass:subexp}.
Further,  if the r.v. $X$ is mean zero, then an equivalent characterization of sub-exponential r.v.s is the following (see \citet[Sec. 2.7]{vershynin2018high}):
there exist  positive constants $\sigma$ and $b$ such that
\begin{equation} \E(\expo{\lambda X}) \le \expo{\dfrac{\lambda^2 \sigma^2}{2}} \text{ for every } |\lambda| < \frac{1}{b}.
\label{bernstein}
\end{equation}

Since \ref{ass:c1} requires $\beta>1$, sub-exponential r.v.s fall outside the class of r.v.s satisfying \ref{ass:c1}. The reason for separately handling sub-exponential r.v.s is that the bound in \cite{fournier2015rate} is not satisfactory for such r.v.s. In particular, the tail bound there exhibits a power law decay for   large values of $\epsilon$. To obtain an exponential decay in the tail bound, we rely on a bound from \cite{lei2020convergence}, which is presented in Lemma \ref{lemma:wasserstein-dist-bound-subexp} below.  In this bound, which is an analogue of Lemma \ref{lemma:wasserstein-dist-bound} for the case of sub-exponential distributions, we make all the constants explicit by tracing through the proofs of Theorem 3.1 and Corollary 5.2 of \cite{lei2020convergence}.

\begin{lemma}\textbf{\textit{(Wasserstein distance bound: sub-exponential case)}}
	\label{lemma:wasserstein-dist-bound-subexp}
	Let $X$ be a r.v. satisfying \ref{ass:subexp} with parameter $c$. Let $F$ denote the CDF of $X$.  Then, for every $n\geq 1$ and  $\epsilon$ satisfying $\epsilon > \frac{384}{c\sqrt{n}}$, we have
	\[ \Prob{ W_{1}(F_n,F) > \epsilon} \le \exp\left(-  \frac{n\left(\epsilon-\frac{384}{c\sqrt{n}}\right)^2}{\frac{32}{c^2} + \frac{4}{c}\left(\epsilon-\frac{384}{c\sqrt{n}}\right)}\right).\]
\end{lemma}
\begin{proof}
	See Section \ref{sec:wasserstein-dist-bound-subexp-proof}.
\end{proof}

%%%%%%%%%%%%%%%%%%%%%%%%%%%%%%%%%%%%%%%%%%%%%%%%%%%%%%%%%%%%%%%%%%%%%%%%%%%%%
%%%%%%%%%%%%%%%%%%%%%%%%%%%%%%%%%%%%%%%%%%%%%%%%%%%%%%%%%%%%%%%%%%%%%%%%%%%%%%%%%%%%%%%%%%%%%%%%%%%%%%%%%%%%%%%%%%%%%%%%%%%%%%%%%%%%%%%%%%%%%%%%%%%%%%%%%%
\subsection{Heavy-tailed distributions}
The third class of distributions that we consider in this paper are those that satisfy a higher-moment bound, as specified by  the following condition:
\begin{assumption}
	\label{ass:heavy}
	There exists $\beta >2$ such that $\E\left(|X|^\beta\right) < \top < \infty$.
\end{assumption}

Distributions satisfying \ref{ass:heavy} fall under the broad class of heavy-tailed distributions (see \cite{nair2012scheduling}), which includes  sub-Gaussian and  sub-exponential distributions, as well as distributions with infinite variance. 

Next, we provide an analogue of Lemma \ref{lemma:wasserstein-dist-bound} for the case of distributions satisfying \ref{ass:heavy}.
\begin{lemma}\textbf{\textit{(Wasserstein distance bound)}}
	\label{lemma:wasserstein-dist-bound-heavy}
	Let $X$ be a r.v. with CDF $F$. Suppose that $X$ satisfies \ref{ass:heavy}. Then, for every $n\geq 1$, $\epsilon \ge 0$ and $\eta\in(0,\beta)$, we have
	\[ \Prob{ W_{1}(F_n,F) > \epsilon} \le c_1\left( \exp\left(-c_2 n \epsilon^2\right)\indic{\epsilon\le 1} 
	+ n\left(n\epsilon\right)^{-(\beta-\eta)}\indic{\epsilon> 1}\right),\]
	where
	$c_1, c_2$ are constants that depend on $\eta$ and the parameters $\beta$ and $\top$ specified in \ref{ass:heavy}. 
\end{lemma}
\begin{proof}
	See Section \ref{sec:wasserstein-dist-bound-heavy-proof}.
\end{proof}

The constants are not available explicitly in the tail bound presented above, as the corresponding result in \cite{fournier2015rate} does not specify constants explicitly, and we could not obtain an explicit expression for the constants as a function of $\beta, \eta$ and $\top$ from the proofs given therein.

	\section{Risk measures and estimators}
	\label{sec:risk-abstract}
	Let $\L$ denote the space of CDFs on $\R$. A {\em risk measure} is simply a map $\rho:\L\rightarrow \R$. With a slight abuse of notation, we will find it convenient to also think of a risk measure as a real-valued function on the set of real-valued r.v.s. More precisely, given a risk measure $\rho$ and a real-valued r.v. $X$ with CDF $F$, we will write $\rho(X)$ to mean $\rho(F)$.  

\subsection{Empirical risk estimators}

Given a risk measure $\rho$ and  CDF $F$, we consider two intuitive empirical estimates of $\rho(F)$ in this paper. The first estimate involves applying $\rho$ to the EDF $F_n$ formed by drawing $n$ samples from $F$. 
To make this precise, we define the estimate $\rho_{n}$ of $\rho(F)$ by
\begin{align}
	\rho_{n}=\rho(F_{n}),
	\label{eq:rho-est} 
\end{align}
where $F_n$ is as defined in \eqref{eq:edf}.

Our second estimate of $\rho(F)$ involves applying $\rho$ to a truncated sample drawn from $F$.  
We introduce some notation to make this precise. Given $\tau>0$, we denote $F|_{\tau}=F\indic{-\tau\leq x<\tau}+\indic{x\geq \tau}$. If $F$ is the CDF of a r.v. $X$, then $F|_{\tau}$ is the CDF of the r.v. $X\indic{-\tau\leq X <\tau}$. 

As before, let $F_n$ denote the EDF of r.v. $X$, formed using i.i.d. samples $X_1,\ldots,X_n$. Our second estimate of the risk measure $\rho(F)$ is then given by  
\begin{align}
	\rho_{n,\tau} \triangleq \rho(F_n|_{\tau}).
	\label{eq:rho-est-trunc} 
\end{align}

We consider two types of risk measures in this paper. The first type satisfies a \holdernosp-continuity requirement in the metric space of distributions under the Wasserstein distance, which is made precise below. The second type, which is presented later, handles some classes of risk measures that may not satisfy a \holder condition.

\subsection{Type-1 \tone\ risk measures }
\label{sec:t1}
\begin{risktype}
	\label{type:one}
Let $(\L,W_1)$ denote the metric space of distributions, with Wasserstein distance as the metric. 
The risk measure $\rho(\cdot)$ is  \holder-continuous on $(\L,W)$, i.e.,  there exists $\kappa\in(0,1]$ and $L>0$ such that, for any two distributions $F, G \in \L$, the following holds:
\begin{align}
	\left| \rho(F) - \rho(G)\right| \le L \left(W_1(F,G)\right)^\kappa. \label{tonedef}
\end{align} 
\end{risktype}

Optimized certainty equivalent (OCE) risk \citep{bental1986oce,bental2007oce}, spectral risk measure \citep{acerbi2002spectral}, and utility-based shortfall risk \citep{follmer2002convex} are three popular families of risk measures that are of type \ref{type:one}. We introduce these risk measures in the following sections and specialize the estimate given in \eqref{eq:rho-est} to the cases of the aforementioned risk measures.

\subsubsection{OCE risk}\label{ocesec}
We adapt the definition of an OCE risk given in \citet{lee2020oce}.  Given a  nondecreasing, convex disutility function $\phi:\R\rightarrow \R$ and a r.v. $X$ such that $\phi(X)$ is integrable, the OCE risk of $X$ determined by $\phi$ is 
    \begin{align}
        \oce(X)\triangleq\inf_{\xi}\left\lbrace \xi + \E\left[ \phi\left(X -\xi\right)\right] \right\rbrace.
        \label{ocedef} 
    \end{align}
   
The following lemma shows that an OCE is a risk measure of type \ref{type:one}. 
\begin{lemma}
	\label{lemma:oce-t1}
	Let $X$ and $Y$ be r.v.s with CDFs $F_{X}$ and $F_{Y}$, respectively, and let $\phi$ be a disutility function as in \eqref{ocedef}. Assume that $\phi$ is $L$-Lipschitz for some $L>0$. Then
	\begin{equation}
		|\oce(X)-\oce(Y)|\leq L W_{1}(F_{X},F_{Y}).
		\label{ocelip}
	\end{equation}
	\label{ocet1lem}
\end{lemma}
\begin{proof}
	See Section \ref{sec:oce-t1-proof}.
\end{proof}

Next, we discuss estimation of an OCE risk from  i.i.d. samples $X_1,\ldots,X_n$, which are drawn from the distribution of $X$. We estimate $\oce(X)$ from such a sample by
\begin{equation}
	\oce_{n} = \inf_{\xi} \left\lbrace \xi + \frac{1}{n}\sum_{i=1}^n\phi( X_i -\xi) \right\rbrace. \label{ocest}
\end{equation}
It is straightforward to see that $\oce_{n}= \oce(Z_n)$, where $Z_n$ is a r.v. with distribution $F_n$. The estimator \eqref{ocest} is thus a special case of the general empirical risk estimate given in \eqref{eq:rho-est}.

Next, we define CVaR, a risk measure that is popular in financial applications.	The CVaR at level $\alpha \in (0,1)$ for an integrable r.v $X$ is defined by 
	\begin{equation}
	\cvar_{\alpha}(X) \triangleq \inf_{\xi} \left\lbrace \xi + \frac{1}{(1-\alpha)}\E\left( X -\xi\right)^+ \right\rbrace, \textrm{ where } (y)^+ = \max\{y,0\}.
	\label{cvardef}
	\end{equation}

It is well known (see \citet{rocka}) that the infimum in the definition of CVaR above is achieved for $\xi=\var(X)$, where $\var(X)=F^{-1}(\alpha)$ is the value-at-risk of the r.v. $X$ at confidence level $\alpha$. Thus CVaR may also be written alternatively as given, for instance, in \citet{Kolla}. In the special case where $X$ has a continuous distribution, $\cvar_{\alpha}(X)$ equals the expectation of $X$ conditioned on the event that $X$ exceeds $\var(X)$. 

A comparison of \eqref{cvardef} and \eqref{ocedef} shows that CVaR at level $\alpha\in(0,1)$ is an OCE risk with the disutility function $\phi(x)=(1-\alpha)^{-1}(x)^{+}$, which is nondecreasing, convex, and $L$-Lipschitz for $L=(1-\alpha)^{-1}$. Applying Lemma \ref{ocet1lem} lets us conclude that CVaR is a risk measure of type \ref{type:one} satisfying 
\begin{equation}
		|\cvar_{\alpha}(X)-\cvar_{\alpha}(Y)|\leq (1-\alpha)^{-1}W_{1}(F_{X},F_{Y})
		\label{cvrlip}
\end{equation}
for any two integrable random variables $X$ and $Y$. 	

We may now apply \eqref{ocest} to estimate CVaR from  i.i.d. samples $X_1,\ldots,X_n$, which are drawn from the distribution of $X$. The resulting estimate of CVaR appeared earlier in  \cite{brown2007large}, and is given by
\begin{equation}
	c_{n,\alpha} = \inf_{\xi} \left\lbrace \xi + \frac{1}{n(1-\alpha)}\sum_{i=1}^n\left( X_i -\xi\right)^+ \right\rbrace. \label{cvarest}
\end{equation}
It is straightforward to see that $c_{n,\alpha}= \cvar_\alpha(Z_n)$, where $Z_n$ is a r.v. with distribution $F_n$. 

\subsubsection{Spectral risk measures}
Spectral risk measures are an alternative generalization of CVaR. Given a weighting function $\phi:[0,1]\rightarrow [0,\infty)$, the spectral risk measure $M_{\phi}$ associated with $\phi$ is defined  by 
\begin{align}
	M_{\phi}(X) = \int_{0}^{1}\phi(\beta)F_X^{-1}(\beta)\mathrm{d}\beta, \label{specdef}
\end{align}
where $X$ is a r.v. with CDF $F_X$. 
If the weighting function, also known as the {\em risk spectrum}, is increasing and integrates to 1, then $M_{\phi}$ is a coherent risk measure like CVaR. In fact, CVaR is itself a special case of \eqref{specdef}, with $\cvar_{\alpha}(X)=M_{\phi}$ for the risk spectrum $\phi=(1-\alpha)^{-1}\indic{\beta\geq \alpha}$
(see \cite{acerbi2002spectral} and \cite{Dowd} for details).
Assuming the r.v. $X$ models losses in a financial application, it is apparent that CVaR treats all losses above a certain threshold equally, by assigning the same weight for $\beta\ge \alpha$. 
One could think of a spectral risk measure with a weight function chosen such that higher losses are given more weight. An example of such a weight function, proposed by \cite{cotter2006extreme}, is $\phi(\beta) = \frac{\varkappa\,\myexp^{-\varkappa(1-\beta)}}{1-\myexp^{-\varkappa}},\ \beta \in [0,1]$, where $\varkappa$ is a constant that controls risk-aversion.
The dual-power risk measure \cite{wang1996premium} employs the weighting function given by $\phi(\beta)= \varkappa (1-\beta)^{\varkappa-1}$, with $\varkappa\ge 1$. 

Our next lemma shows that a spectral risk measure having a bounded weighting function is a risk measure of type \ref{type:one}. The proof is omitted as the lemma follows very easily from the definition \eqref{specdef}, and the last characterization of the Wasserstein distance given in \eqref{lipwass2}. 

\begin{lemma}
	\label{lemma:srm-t1}
	Let $X$ and $Y$ be r.v.s with CDFs $F_{X}$ and $F_{Y}$, respectively, and let $\phi:[0,1]\rightarrow [0,\infty)$ be a weighting function as in \eqref{specdef}. Assume that $\phi(u)\leq K$ for all $u\in[0,1]$. Then 
	\begin{equation}
		|M_{\phi}(X)-M_{\phi}(Y)|\leq KW_{1}(F_{X},F_{Y}).
		\label{specrsklip}
	\end{equation}
	\label{spectt1lem}
\end{lemma}
\begin{remark}
	\label{remark:srm-ph}
	The two examples of the weighting function given above satisfy the boundedness requirement in the lemma above. However, there are spectral risk measures with weighting function that are not bounded. For example, the weighting function of the  proportional hazard transform risk measure \citep{wang1995insurance} is unbounded, and  given by  $\phi(\beta) = \frac{1}{\varkappa} \beta^{\frac{1}{\varkappa}-1}$, with $\varkappa\ge 1$. Such a risk measure may  not fall under the class of \ref{type:one} risk measures. We shall handle  spectral risk measures with unbounded weight functions through the \ref{type:two} class of risk measures introduced in subsection \ref{sec:t2} below. 
\end{remark}	

We now discuss the  estimation of a spectral risk measure from an i.i.d. sample $X_{1},\ldots,X_{n}$, which is drawn from the CDF $F$ of a r.v. $X$. Using the EDF, a natural empirical estimate of the spectral risk measure $M_{\phi}(X)$ of $X$ is 
\begin{equation}
	m_{n,\phi}=\int_{0}^{1}\phi(\beta)F_{n}^{-1}(\beta)\mathrm{d}\beta. \label{empspecdef}
\end{equation}
As in the case of an OCE risk, the estimate defined above is also a special case of  the general estimate given in \eqref{eq:rho-est}.

\subsubsection{Utility-based shortfall risk (UBSR)}
VaR as a risk measure is not popular owing to the fact that it is not sub-additive. CVaR overcomes this limitation, and is a coherent risk measure. Convex risk measures  \citep{follmer2002convex} are  a more general class of measures than coherent risk measures, because sub-additivity and homogeneity imply convexity. UBSR form a popular class of convex risk measures. 

For a r.v. $X$, the utility-based shortfall risk $S_\alpha(X)$ is defined as 
\begin{align}
	S_\alpha(X) = \inf\left\{\xi \in \R \mid \E\left( l(X-\xi)\right)\le \alpha\right\},
	\label{eq:ubsr-def}
\end{align}
where $l:\R\rightarrow \R$ is a utility function. $S_\alpha(X)$ can be seen to be the value  as well as the minimizer of the following constrained minimization problem:
\begin{align} 
	\min_{\xi\in \R} \xi  \qquad\textrm{ subject to }\qquad \E\left( l(X-\xi)\right)\le \alpha.\label{eq:ubsr-equiv-form}
\end{align}

UBSR is a risk measure that generalizes VaR, since one recovers VaR from \eqref{eq:ubsr-def} by employing an indicator function for $l$. While VaR is not a convex risk measure, a suitable choice for the utility function $l$ in \eqref{eq:ubsr-def} can yield a convex risk measure, cf. Section 4.9 of \cite{follmer2016stochastic}.  Furthermore, in contrast to CVaR, UBSR is invariant under randomization \citep{dunkel2010stochastic}. Invariance under randomization  formally means that given two r.v.s $X_1, X_2$ satisfying $S_\alpha(X_i) \le 0, i=1,2$, the  compound r.v. $Z$ that chooses between $X_1$ and $X_2$ using an independent Bernoulli r.v. also satisfies  $S_\alpha(Z)\leq 0$.

For the purpose of showing that UBSR is of type \ref{type:one}, we require that $l$ be Lipschitz. While our results below require no additional assumptions on $l$, we point out that a convexity assumption on $l$ leads to a useful dual representation \citep{follmer2002convex}. Likewise, if the utility is increasing, then the estimation of  UBSR from i.i.d. samples can be performed in a computationally efficient manner (see \cite{hu2018utility}).

The following lemma shows that UBSR is a risk measure of type \ref{type:one}. 
\begin{lemma}
	\label{lemma:ubsr-t1}
	 Suppose the utility function $l$ is non-decreasing, and there exist $K,k>0$ such that $l$ is $K$-Lipschitz and satisfies $l(x_{2})\geq l(x_{1})+k(x_{2}-x_{1})$ for every $x_{1},x_{2}\in\R$ satisfying  $x_{2}\geq x_{1}$. Let $X$ and $Y$ be  r.v.s with CDFs $F_{X}$ and $F_{Y}$, respectively. Then
	\begin{align}
		|S_{\alpha}(X)-S_{\alpha}(Y)|\leq \frac{K}{k} W_{1}(F_{X},F_{Y}).
		\label{ubrlip}
	\end{align}
	\label{ubsrt1lem}
\end{lemma}
\begin{proof}
	See Section \ref{sec:ubsr-t1-proof}.
\end{proof}

Next, we discuss the estimation of UBSR $S_\alpha(X)$. Given $n$ i.i.d. samples $X_{1},\ldots,X_{n}$, UBSR is estimated as the solution of the following constrained optimization problem \citep{hu2018utility}:  
\begin{align}
	\min_{\xi\in \R} \xi  \qquad\textrm{ subject to }\qquad \frac{1}{n}\sum_{i=1}^n  l(X_i-\xi)\le \alpha.
	\label{eq:sr-est}
\end{align}
The problem above can be seen as a sample-average approximation to \eqref{eq:ubsr-equiv-form}. In other words, the solution of  \eqref{eq:sr-est} is the UBSR value of a r.v. distributed according to the EDF  $F_n$ of $X$. Thus, as in the case of OCE risk  and spectral risk measures, the estimation scheme provided above is a special case of the general estimate given in \eqref{eq:rho-est}.

In \cite{hu2018utility}, the authors show that the problem \eqref{eq:sr-est} can be solved efficiently using a bisection method if the utility function $l$ is increasing and an interval containing  $S_\alpha(X)$ is known. The reader is referred to \cite{hu2018utility} for further details.

\subsubsection{Two Examples} \label{examples}
The \ref{type:one} measures seen so far, namely, OCE risk, spectral risk measures, and UBSR all satisfy \eqref{tonedef} with $\kappa=1$. To illustrate the point that the additional generality provided in \eqref{tonedef} is not vacuous, we next present an example of a risk measure that satisfies the definition of a \ref{type:one} measure for some $\kappa<1$, but not for $\kappa=1$. 

\begin{example} \label{ctrex1}
Consider the risk measure $\rho(F) = \int_0^1 (1-F(x))^{\frac{1}{3}}\mathrm{d}x$. To show that $\rho$ is \ref{type:one}, let $F$ and $G$ be two distributions. We have 
\begin{eqnarray*}
|\rho(F)-\rho(G)|&\leq & \int_{0}^{1}|(1-F(x))^{\frac{1}{3}}-(1-G(x))^{\frac{1}{3}}|\mathrm{d}x \\
&\leq & 2^{\frac{2}{3}}  \int_{0}^{1}|F(x)-G(x)|^{\frac{1}{3}}\mathrm{d}x \\
&\leq &  2^{\frac{2}{3}}\left[\int_{0}^{1}|F(x)-G(x)|\mathrm{d}x\right]^{\frac{1}{3}}\leq 2^{\frac{2}{3}}[W_{1}(F,G)]^{\frac{1}{3}},
\end{eqnarray*}
where the last inequality above follows from Lemma \ref{lemma:lipschitz-wasserstein}, the first and third inequalities above follow from Jensen's inequality, and the second inequality follows from the fact that $|a^{\alpha}-b^{\alpha}|\leq 2^{1-\alpha}|a-b|^{\alpha}$ for every $a,b\in\R$ and $\alpha\in(0,1]$ such that $\alpha$ equals the ratio of two odd positive integers (see Fact 2.2.77 in \citet{dsb}). This shows that $\rho$ satisfies \eqref{tonedef} for all $F$ and $G$ with $\kappa=\frac{1}{3}$. 

We claim that there exists no $L>0$ such that $\rho$ satisfies \eqref{tonedef}  with $\kappa=1$ for all distributions. To see this, let $L>0$, and pick $a\in(0,L^{-\frac{3}{2}})$. Let $F$ (respectively, $G$) be the CDF of a random variable that takes the value $a$ (respectively, $0$) almost surely. It is a simple matter to calculate $\rho(F)=a^{\frac{1}{3}}$, $\rho(G)=0$, and $W_{1}(F,G)=a$. Consequently, we have  $\frac{|\rho(F)-\rho(G)|}{W_{1}(F,G)}=a^{-\frac{2}{3}}>L$. Since $L>0$ was chosen arbitrarily, our claim follows. 
\end{example}

To provide motivation for the class of risk measures that we introduce next, we provide an example of a risk measure that is not \ref{type:one}. 

\begin{example} \label{ctrex2}
	Consider the risk measure $\rho(F) = \int_0^\infty \sqrt{1-F(x)}\mathrm{d}x$. To show that $\rho$ is not \ref{type:one}, let $L>0$ and $\kappa\in(0,1]$ be given. Choose $p\in(0,L^{-2})$. Additionally, choose $a=1$ or $a>[Lp^{\kappa-\frac{1}{2}}]^{\frac{1}{1-\kappa}}$ accordingly as $\kappa=1$ or $\kappa<1$, respectively. Define CDFs $G$ and $F$ by $G(x)\triangleq \indic{x\geq 0}$ and $F(x)\triangleq (1-p)\indic{x\geq 0}+p\indic{x\geq a}$ for all $x\in\R$. Note that $G$ is the CDF of a r.v. that equals $0$ a.s., while $F$ is the CDF of a r.v. that takes the value $0$ with probability $(1-p)$ and the value $a$ with probability $p$. It is a simple matter to evaluate $\rho(G)=0$ and $\rho(F)=a\sqrt{p} $.  Likewise, Lemma \ref{lemma:lipschitz-wasserstein} can be used to compute $W_{1}(F,G)=ap.$ One can now easily check that, in the case where $\kappa=1$, $\frac{|\rho(F)-\rho(G)|}{[W_{1}(F,G)]^{\kappa}}=p^{-\frac{1}{2}}>L$, while in the case where $\kappa<1$, $\frac{|\rho(F)-\rho(G)|}{[W_{1}(F,G)]^\kappa}=\frac{a^{1-\kappa}}{p^{\kappa-\frac{1}{2}}}>L$. We have thus shown that, for every $L>0$ and $\kappa\in(0,1]$, there exist CDFs $F$ and $G$ such that \eqref{tonedef} fails to hold. It follows that the risk measure $\rho$ is not \ref{type:one}. 
	\end{example}

	\subsection{Type-2 \ttwo\ risk measures }
\label{sec:t2}

In this subsection, we consider risk measures that satisfy a weaker version of the \holdernosp-continuity requirement specified in \ref{type:one}\ risk measures, and the definition below makes this continuity requirement precise.
\begin{risktype}
	\label{type:two}
Let $(\L,W_1)$ denote the metric space of distributions, with Wasserstein distance as the metric. 
The risk measure $\rho(\cdot)$ satisfies {\em a   truncated \holdernosp-continuity condition with a tail bound} on $(\L,W_{1})$ if there exist positive constants $\alpha_{1}$ ,$\alpha_{2}$, $\alpha_{3}$ $L_{1},L_{2},L_{3},K_{1},K_{2}$, and $\gamma$ such that $\alpha_{1},K_{1},K_{2}\leq 1$ and,  for every choice of two distributions $F, G \in \L$ and every $\tau >0$, the following holds:
\begin{align}
	\left| \rho(F) - \rho(G|_{\tau})\right| &\leq L_{1} \left(W_1(F,G)\right)^{\alpha_{1}} \tau^{\gamma}+L_{2}\int_{K_{1}\tau}^{\infty}[1-F(z)]^{\alpha_{2}}\mathrm{d}z \nonumber \\
	& {}+ L_{3}\int^{-K_{2}\tau}_{-\infty}[F(z)]^{\alpha_{3}}\mathrm{d}z.
	\label{ttwodef}
\end{align} 
\end{risktype}

To see the significance of the inequality \eqref{ttwodef}, let $\rho$ be a type \ref{type:two}\ risk measure satisfying \eqref{ttwodef}, and consider two distributions $F$ and $G$ supported on the interval $[-K_{2}\tau,K_{1}\tau]$ for some $\tau>0$. Then $G|_{\tau}=G$, and $1-F(z)=0=F(y)$  for all $z>K_{1}\tau$ and $y<-K_{2}\tau$. The inequality \eqref{ttwodef} then yields
\begin{equation}
    \left| \rho(F) - \rho(G)\right| \leq L_{1} \left(W_1(F,G)\right)^{\alpha_{1}} \tau^{\gamma}.
\end{equation}
The equation above indicates that, when restricted to distributions having bounded support, a type  \ref{type:two}\ risk measure is H\"{o}lder continuous with respect to  the Wasserstein distance with a H\"{o}lder constant that grows in an unbounded manner as the  length of the support increases.  

The reader will notice that the condition \eqref{ttwodef} is not symmetric in the distributions $F$ and $G$, and it may seem that a more natural-looking condition might be to require that 
\begin{align}
	\left| \rho(F) - \rho(G)\right| &\leq L_{1} \left(W_1(F,G)\right)^{\alpha_{1}} \tau^{\gamma}+L_{2}\int_{\tau}^{\infty}|F(z)-G(z)|^{\alpha_{2}}\mathrm{d}z \nonumber \\
	& {}+ L_{3}\int^{-\tau}_{-\infty}|F(z)-G(z)|^{\alpha_{3}}\mathrm{d}z
	\label{ttwoalt}
\end{align} 
hold for all $\tau\geq 1$. 
Note that the condition \eqref{ttwoalt} implies \eqref{ttwodef}, and hence \eqref{ttwodef} is the weaker of the two. Also, as we show below, \eqref{ttwodef} suffices for the bounds that we wish to derive and apply to known risk measures. Hence we choose to go with the weaker, albeit odd-looking, condition \eqref{ttwodef}. 

In the following subsection, we introduce cumulative prospect theory (CPT), which is a prominent risk measure in human-centered decision making systems, and an example of a \ref{type:two}\ risk measure. Subsequently, we describe rank-dependent expected utility (RDEU), which we show to be a special instance of the CPT-value that we define below. 
The risk measure that was seen to be not  \ref{type:one} in example  \ref{ctrex2} is a special case of the CPT-value that we introduce next.

\subsubsection{Cumulative prospect theory (CPT)}
For any r.v. $X$, the CPT-value is defined as 
\begin{align}
	C(X) &= \int_{0}^{\infty} w^+\left(\Prob{u^+(X)>z}\right) \mathrm{d}z - \int_{0}^{\infty} w^-\left(\Prob{u^-(X)>z}\right) \mathrm{d}z. \label{eq:cpt-val}
\end{align}
Let us deconstruct the above definition. First, the functions
$u^+,u^-:\R\rightarrow \R_+$ are utility functions which are assumed to be continuous,
with $u^+(x)=0$ when $x\le 0$ and increasing otherwise, and with $u^-(x)=0$ when $x\ge 0$ and decreasing otherwise.
The utility functions capture the human inclination to play safe with gains and take risks with losses  -- see Fig \ref{fig:u}.
Second, 
$w^+,w^-:[0,1] \rightarrow [0,1]$ are weight functions, which are assumed to be continuous, non-decreasing and satisfy $w^+(0)=w^-(0)=0$ and $w^+(1)=w^-(1)=1$.
The weight functions $w^+, w^-$ capture the human inclination to view probabilities in a non-linear fashion. \cite{tversky1992advances,Barberis:2012vs}  (see Fig \ref{fig:w} from \cite{tversky1992advances}) recommend the following choices for $w^+$ and $w^-$, based on inference from experiments involving human subjects:
\begin{align}
	w^+(p) = \frac{p^{0.61}}{\left(p^{0.61}+(1-p)^{0.61}\right)^{\frac1{0.61}}}, \textrm{ and } w^-(p) = \frac{p^{0.69}}{\left(p^{0.69}+(1-p)^{0.69}\right)^{\frac1{0.69}}}.\label{eq:cpt-weights}
\end{align}

\begin{minipage}{.5\textwidth}
	\centering
	\tabl{c}{
		\scalebox{0.65}{\begin{tikzpicture}
				\begin{axis}[width=10cm,height=6.5cm,legend pos=south east,         
					axis lines=middle,
					xmin=-5,     
					xmax=5,    
					ymin=-4,    
					ymax=4,  
					ylabel={\large\bf Utility},
					xlabel={\large\bf Gains},
					x label style={at={(axis cs:4.7,-0.8)}},
					y label style={at={(axis cs:-0.8,4.8)}},
					xticklabels=\empty,
					yticklabels=\empty,
					legend style={at={(0.5,-0.1)},anchor=north, legend columns=-1,column sep=1ex}
					]
					\addplot[name path=cptplus,domain=0:5, green!35!black, very thick,smooth] 
					{pow(abs(x),0.8)}; 
					\addlegendentry{\large $\bm{u^+}$};
					\addplot[name path=cptminus,domain=-5:0, red!35!black,very thick,smooth] 
					{-2*pow(abs(x),0.7)}; 
					\addlegendentry{\large $\bm{-u^-}$}
					\addplot[domain=-5:5, blue, thick]           {x}; 
					
					\path[name path=diagplus] (axis cs:0,0) -- (axis cs:5,5);
					\path[name path=diagminus] (axis cs:-5,-5) -- (axis cs:0,0);
					\path[name path=xaxisplus] (axis cs:0,0) -- (axis cs:5,0);
					\path[name path=xaxisminus] (axis cs:-5,0) -- (axis cs:0,0);
					\path[name path=yaxisplus] (axis cs:0,0) -- (axis cs:0,5);
					\path[name path=yaxisminus] (axis cs:0,-5) -- (axis cs:0,0);
					
					\node at (axis cs:  -3.7,-0.45) {\large\bf Losses};
				\end{axis}
		\end{tikzpicture}}\\[1ex]
	}
	
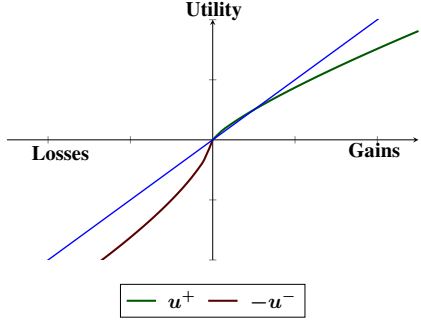
\captionof{figure}{Utility function}
	\label{fig:u}
\end{minipage}%
\begin{minipage}{.5\textwidth}
	\centering
	\tabl{c}{
		\scalebox{0.65}{\begin{tikzpicture}
				\begin{axis}[width=10cm,height=6.5cm,legend pos=south east,
					grid = major,
					grid style={dashed, gray!30},
					xmin=0,     
					xmax=1,    
					ymin=0,    
					ymax=1,  
					axis background/.style={fill=white},
					ylabel={\large Weight $\bm{w(p)}$},
					xlabel={\large Probability $\bm{p}$},
					legend style={at={(0.5,-0.25)},anchor=north, legend columns=-1,column sep=1ex}
					]
					\addplot[domain=0:1, magenta, thick,smooth,samples=1500] 
					{pow(x,0.6)/pow((pow(x,0.61) + pow(1-x,0.61)),1.64)}; 
					\addlegendentry{\large $\bm{w(p)=\frac{p^{0.61}}{(p^{0.61}+ (1-p)^{0.61})^{1/0.61}}}$};           
					\addplot[domain=0:1, upmaroon, thick]           {x};                      
					\addlegendentry{\large $\bm{w(p)=p}$};           
				\end{axis}
		\end{tikzpicture}}\\[1ex]
	}
	
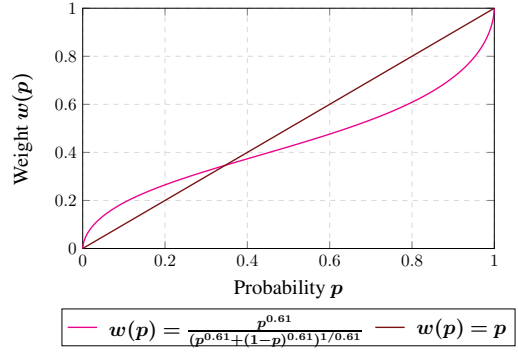
\captionof{figure}{Weight function}
	\label{fig:w}
\end{minipage}  

The lemma below shows that CPT-value is a risk measure of type \ref{type:two} under certain assumptions. 
\begin{lemma}
	\label{lemma:cpt-t2}
	Suppose that the utility functions $u^+,u^-:\R\rightarrow \R_+$ are differentiable, and their derivatives are bounded above and below by $K^{+}>0$ and $k^{+}>0$, and $K^{-}$ and $k^{-}>0$, respectively,  in absolute value.
	Further assume that the weight functions $w^{+}$ and $w^{-}$ are H\"{o}lder continuous with exponent $\alpha\in(0,1]$ and H\"{o}lder constant $L>0$,  and let $F,G \in\L$ be CDFs. Then, for every $\tau>0$, we have
	\begin{align}
		|C(F)-C(G|_{\tau})|&\leq L(K^{+}+K^{-})[W_{1}(F,G)]^{\alpha}\tau^{(1-\alpha)}+LK^{+}\int_{\frac{k^{+}}{K^{+}}\tau}^{\infty}[1-F(z)]^{\alpha}\mathrm{d}z \nonumber\\
		& {}+LK^{-}\int^{-\frac{k^{-}}{K^{-}}\tau}_{-\infty}[F(z)]^{\alpha}\mathrm{d}z.
		\label{cptwkhld}
	\end{align}
	\label{cpttt2lem} 
\end{lemma}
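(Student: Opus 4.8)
The plan is to decompose the CPT-value into its gain and loss parts, $C(X)=C^{+}(X)-C^{-}(X)$, where $C^{+}(X)=\int_{0}^{\infty} w^{+}(\Prob{u^{+}(X)>z})\,\mathrm{d}z$ and $C^{-}(X)=\int_{0}^{\infty} w^{-}(\Prob{u^{-}(X)>z})\,\mathrm{d}z$, bound $|C^{+}(F)-C^{+}(G|_{\tau})|$ and $|C^{-}(F)-C^{-}(G|_{\tau})|$ separately, and combine by the triangle inequality. I will treat the gain part in detail; the loss part is symmetric. Since $u^{+}$ is increasing on $(0,\infty)$ with $u^{+}(0)=0$ and derivative bounded below by $k^{+}>0$, it has a well-defined increasing inverse $a=(u^{+})^{-1}$, and for $z>0$ one has $\Prob{u^{+}(X)>z}=1-F(a(z))$. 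The crucial observation about truncation is that, because $G|_{\tau}$ clamps mass to $1$ at $x=\tau$ and $w^{+}(0)=0$, the integrand $w^{+}(1-G|_{\tau}(a(z)))$ vanishes for $z\ge u^{+}(\tau)$; hence $C^{+}(G|_{\tau})=\int_{0}^{u^{+}(\tau)} w^{+}(1-G(a(z)))\,\mathrm{d}z$, and the difference splits into a bulk term over $[0,u^{+}(\tau)]$ and a tail term over $[u^{+}(\tau),\infty)$.

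For the bulk term I would apply the H\"{o}lder continuity of $w^{+}$ to bound the integrand by $L|F(a(z))-G(a(z))|^{\alpha}$, change variables $x=a(z)$ (so $\mathrm{d}z=(u^{+})'(x)\,\mathrm{d}x\le K^{+}\,\mathrm{d}x$) to land in $x$-space over $[0,\tau]$, and then invoke Jensen's inequality using the concavity of $t\mapsto t^{\alpha}$ (valid since $\alpha\le 1$, as $w^{+}$ is non-constant and H\"{o}lder of exponent $\alpha$). This pulls out a factor $\tau^{1-\alpha}$ and reduces the remaining $\int_{0}^{\tau}|F-G|\,\mathrm{d}x$ to $W_{1}(F,G)$ via the second equality in \eqref{lipwass2} of Lemma \ref{lemma:lipschitz-wasserstein}, yielding the bound $LK^{+}\tau^{1-\alpha}[W_{1}(F,G)]^{\alpha}$.

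For the tail term I would use $w^{+}(p)=|w^{+}(p)-w^{+}(0)|\le Lp^{\alpha}$ to bound the integrand by $L[1-F(a(z))]^{\alpha}$. The point where the constant $k^{+}/K^{+}$ enters is here: from $u^{+}(0)=0$ and $k^{+}\le (u^{+})'\le K^{+}$ one gets the linear envelope $k^{+}x\le u^{+}(x)\le K^{+}x$ for $x\ge 0$, hence $u^{+}(\tau)\ge k^{+}\tau$ and $a(z)\ge z/K^{+}$. Monotonicity of $F$ then gives $1-F(a(z))\le 1-F(z/K^{+})$, and after extending the lower limit down to $k^{+}\tau$ and substituting $y=z/K^{+}$ the tail is bounded by $LK^{+}\int_{(k^{+}/K^{+})\tau}^{\infty}[1-F(y)]^{\alpha}\,\mathrm{d}y$, exactly matching the stated term. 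The loss part is handled identically using $b=(u^{-})^{-1}$ restricted to the negatives, the envelope $k^{-}|x|\le u^{-}(x)\le K^{-}|x|$ for $x\le 0$, and the substitution $y=-z/K^{-}$, producing $LK^{-}\tau^{1-\alpha}[W_{1}(F,G)]^{\alpha}$ and $LK^{-}\int_{-\infty}^{-(k^{-}/K^{-})\tau}[F(y)]^{\alpha}\,\mathrm{d}y$. Summing the gain and loss bounds gives the claim.

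I expect the main obstacle to be bookkeeping around the truncation rather than any deep estimate: one must be careful that $G|_{\tau}$ clamps (rather than deletes) the tail mass, correctly identify that the $z$-integration for the truncated measure terminates at $u^{+}(\tau)$, and then route the tail estimate through the linear derivative envelopes $k^{+}x\le u^{+}(x)\le K^{+}x$ so that the final bound is expressed in terms of the \emph{undistorted} tail $\int[1-F]^{\alpha}$ with the precise lower limit $(k^{+}/K^{+})\tau$. A secondary point worth verifying is that $\alpha\le 1$, needed for the Jensen step; this follows because a non-constant H\"{o}lder function of exponent larger than $1$ cannot exist and $w^{+}(0)=0\ne 1=w^{+}(1)$.
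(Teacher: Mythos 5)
Your proposal is correct and follows essentially the same route as the paper's proof: the same gain/loss decomposition, the same observation that truncation kills the integrand beyond $u^{+}(\tau)$, the same H\"{o}lder-plus-Jensen treatment of the bulk term (merely swapping the order of the change of variables and the Jensen step), and the same derivative envelopes $k^{+}x\le u^{+}(x)\le K^{+}x$ to relocate the tail integral to $[(k^{+}/K^{+})\tau,\infty)$. Your explicit check that $\alpha\le 1$ (needed for concavity in the Jensen step) is a small point the paper leaves implicit, but it does not change the argument.
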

\begin{proof}
	See Section \ref{sec:cpt-t2-proof}.
\end{proof}

\begin{remark}
\label{rem:ctrex}
It is not difficult to see that the risk measure considered in Example \ref{ctrex2} can be written as a right-tailed version of CPT value by letting $u^{-}$ be identically zero, $u^{+}$ to be the identity map, and $w^{+}$ to be the square root function. Although these choices do not satisfy the assumptions of Lemma  \ref{cpttt2lem}, one can follow the steps in the proof of Lemma \ref{cpttt2lem} to show that the risk measure in the example is \ref{type:two}. Example \ref{ctrex2} thus provides a risk measure that is \ref{type:two} but not \ref{type:one}. 
\end{remark}

%%%%%%%%%%%%%%%%%%%%%%%%%%%%%%%%%%%%%%%%%%%%%%%%%%%%%%%%%%%%%%%%%%%%% 
We now describe the CPT-value estimation scheme, which is a variant of the one proposed in \citet{prashanth2016cumulative}. In particular, unlike the aforementioned reference, we employ a truncated EDF to obtain the CPT-value estimate.
Let $X_i$, $i=1,\ldots,n,$ denote $n$ independent samples from the distribution of $X$. 
For any given $\tau_n$, and  real-valued functions $u^+$ and $u^-$, let $\left(F_n|_{\tau_n}\right)^+$ and $\left( F_n|_{\tau_n}\right)^-$ denote the truncated EDFs formed from the samples $\{u^+(X_i), i=1,\ldots,n\}$ and $\{u^-(X_i), i=1,\ldots,n\}$, respectively.
Using the truncated EDFs, the CPT-value is estimated as follows:
\begin{align}
	C_n =& \int_{0}^{\tau_n} w^+(1-\left(F_n|_{\tau_n}\right)^+(x))  \mathrm{d}x - \int_{0}^{\tau_n} w^-(1-\left(F_n|_{\tau_n}\right)^-(x))  \mathrm{d}x.\label{eq:cpt-est}
\end{align}
Notice that we have substituted the complementary (truncated) EDFs $\left(1-\left(F_n|_{\tau_n}\right)^+(x)\right)$ and \\$\left(1-\left(F_n|_{\tau_n}\right)^-(x)\right)$  for $\Prob{u^+(X)>x}$ and $\Prob{u^-(X)>x}$, respectively,  in \eqref{eq:cpt-val}, and then performed an integration of the weight function composed with the complementary EDF. It is apparent that the CPT-value estimator in \eqref{eq:cpt-est} equals the CPT value of the truncated EDF $F_{n}|_{\tau_{n}}$, and thus is a special case of the estimator \eqref{eq:rho-est-trunc} for a general risk measure.

Let $\tilde X_i = X_i \indic{X_i \le \tau_n}$, for $i=1,\ldots,n$. 
Using arguments similar to that in Section III of \citet{prashanth2016cumulative}, the first and second integral, say $C_n^+$ and $C_n^-$, in \eqref{eq:cpt-est} can be easily computed using the order statistics $\{\tilde X_{(1)}, \ldots, \tilde X_{(n)}\}$ of the truncated samples $\{\tilde X_i: i=1,\ldots,n\}$ as follows:
\begin{align*}
	C_n^+&=\sum_{i=1}^{n} u^+(\tilde X_{[i]}) \left[w^+\left(\frac{n+1-i}{n}\right)- w^+\!\left(\frac{n-i}{n}\right) \right],\\
	C_n^-&=\sum_{i=1}^{n} u^-(\tilde X_{[i]}) \left[w^-\left(\frac{i}{n}\right)- w^-\left(\frac{i-1}{n}\right) \right].  
\end{align*}

%%%%%%%%%%%%%%%%%%%%%%%%%%%%%%%%%%%%%%%%%%%%%%%%%%%%%%%%%%%%%%%%%%%%% 
%%%%%%%%%%%%%%%%%%%%%%%%%%%%%%%%%%%%%%%%%%%%%%%%%%%%%%%%%%%%%%%%%%%%% 
\subsubsection{Rank-dependent expected utility (RDEU)}
Let $w:[0,1]\rightarrow [0,1]$ be an increasing %, H\"{o}lder continuous 
weight function 
such that $w(0)=0$ and $w(1)=1$. Let $u:\R\rightarrow\R$ be a continuous, increasing  function satisfying  $u(0)=0$. Then, following \cite{quiggin2012generalized}, the RDEU-value $V(F)$ is defined by
\begin{align}
	V(F) = \int_{-\infty}^{\infty} u(x) \mathrm{d}(w \circ F)(x). \label{eq:rdeu-def}
\end{align}

The result below shows that RDEU-value is a special case of CPT-value as defined in \eqref{eq:cpt-val}
To elaborate, CPT allows one the freedom of choosing two different weight functions $w^+$ and $w^-$. By suitably defining these, we show that RDEU is a special case of CPT. 
\begin{lemma}
	\label{lemma:rdeu-t2} Let $w:[0,1]\rightarrow [0,1]$ and $u:\R\rightarrow \R$ be a weight function and a utility function as above. Assume that $u$ is unbounded above and below. 
	Define $u^+(x) =u(x\indic{x\ge 0})$, $u^-(x) =-u(x\indic{x< 0})$, for $x\in \R$. Let $w^-(p) =w(p)$, and $w^+(p)=1-w(1-p)$, for $p\in [0,1]$. Then, for any r.v. $X$ with CDF $F$, we have
	\[V(F) = \int_{0}^{\infty} w^+\left(\Prob{u^+(X)>z}\right) \mathrm{d}z - \int_{0}^{\infty} w^-\left(\Prob{u^-(X)>z}\right) \mathrm{d}z. \]
\end{lemma}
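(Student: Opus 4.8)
The plan is to recognize the RDEU integral as an ordinary expectation under a \emph{distorted} distribution, and then expand that expectation through its tail (layer-cake) representation so that each of its two pieces matches, term by term, one of the two integrals appearing in the CPT-value. Since $w$ is continuous and non-decreasing with $w(0)=0$ and $w(1)=1$, the composition $G\triangleq w\circ F$ is again a CDF. Letting $Z$ be a r.v.\ with CDF $G$, the definition \eqref{eq:rdeu-def} reads $V(F)=\int_{-\infty}^{\infty}u(x)\,\mathrm{d}G(x)=\E\left(u(Z)\right)$, so it suffices to rewrite $\E\left(u(Z)\right)$ in the desired form.

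First I would invoke the standard tail-sum identity for the mean: since $u$ is increasing with $u(0)=0$, one has $\E\left(u(Z)\right)=\int_{0}^{\infty}\Prob{u(Z)>z}\,\mathrm{d}z-\int_{0}^{\infty}\Prob{u(Z)<-z}\,\mathrm{d}z$, valid whenever $V(F)$ is finite. Using the monotonicity of $u$ together with $G=w\circ F$, each tail probability is expressed through $F$ as $\Prob{u(Z)>z}=1-w\left(F(u^{-1}(z))\right)$ and $\Prob{u(Z)<-z}=w\left(F\!\left((u^{-1}(-z))^{-}\right)\right)$. The generalized-inverse and left-limit refinements here alter the integrands only on an at most countable set of $z$, so they are immaterial once integrated against $\mathrm{d}z$.

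It then remains to check that the two CPT integrands coincide with these expressions under the stated choices of $u^{+},u^{-},w^{+},w^{-}$. For $z>0$, the definition $u^{+}(x)=u(x\indic{x\ge 0})$ gives $\{u^{+}(X)>z\}=\{u(X)>z\}$ (as $u(X)>z>0$ already forces $X>0$), hence $\Prob{u^{+}(X)>z}=1-F(u^{-1}(z))$; applying $w^{+}(p)=1-w(1-p)$ yields $w^{+}\left(\Prob{u^{+}(X)>z}\right)=1-w\left(F(u^{-1}(z))\right)$, which is exactly $\Prob{u(Z)>z}$. Symmetrically, $u^{-}(x)=-u(x\indic{x<0})$ gives $\Prob{u^{-}(X)>z}=\Prob{u(X)<-z}=F\!\left((u^{-1}(-z))^{-}\right)$ for $z>0$, and $w^{-}(p)=w(p)$ then gives $w^{-}\left(\Prob{u^{-}(X)>z}\right)=\Prob{u(Z)<-z}$. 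Integrating both identities in $z$ and substituting into the tail-sum representation of $\E\left(u(Z)\right)$ delivers the claimed equality.

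The conceptual steps — identifying $w\circ F$ as a distorted CDF and exploiting the complementation $w^{+}(p)=1-w(1-p)$ to convert an upper tail of $F$ into a lower tail of $G$ — are short. I expect the main thing requiring care to be the measure-theoretic bookkeeping around discontinuities of $F$ (strict versus non-strict inequalities, and the left-limit in the loss term), together with the integrability needed to split $\E\left(u(Z)\right)$ into two separate tail integrals, rather than any substantive inequality.
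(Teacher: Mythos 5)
Your proposal is correct and follows essentially the same route as the paper's proof: viewing $w\circ F$ as a distorted CDF and applying the layer-cake (tail-sum) representation is exactly the Fubini interchange the paper performs on $\int u(x)\,\mathrm{d}(w\circ F)(x)$, and the subsequent matching of the two tail integrals with the CPT integrands via $w^{+}(p)=1-w(1-p)$ and $w^{-}=w$ is identical. Your explicit attention to strict-versus-non-strict inequalities and left limits at discontinuities of $F$ is slightly more careful than the paper's treatment, but does not change the argument.
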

\begin{proof}
	See Section \ref{sec:rdeu-t2-proof}.
\end{proof}

Thus, the RDEU-value $V$ is a special case of CPT-value, with $u^{\pm}$ and $w^{\pm}$ chosen as in the statement of the lemma above. Note that for the choice of $w^+$ and $w^-$ given in Lemma \ref{lemma:cpt-t2}, we have 
\[w^+(p) + w^-(1-p)=1.\] CPT-value, as defined in \eqref{eq:cpt-val}, is more general, as the weight functions $w^+$ and $w^-$ are not required to satisfy the aforementioned equality. Owing to the fact that RDEU is a special case of CPT, the estimation scheme presented in \eqref{eq:cpt-est} applies to RDEU as well.

%%%%%%%%%%%%%%%%%%%%%%%%%%%%%%%%%%%%%%%%%%%%%%%%%%%%%%%%%%%%%%%%%%%%% 
%%%%%%%%%%%%%%%%%%%%%%%%%%%%%%%%%%%%%%%%%%%%%%%%%%%%%%%%%%%%%%%%%%%%% 
%%%%%%%%%%%%%%%%%%%%%%%%%%%%%%%%%%%%%%%%%%%%%%%%%%%%%%%%%%%%%%%%%%%%% 
%%%%%%%%%%%%%%%%%%%%%%%%%%%%%%%%%%%%%%%%%%%%%%%%%%%%%%%%%%%%%%%%%%%%% 
\subsubsection{Distorted risk measure (DRM)}
For a r.v. $X$ with CDF $F$, the DRM $D(X)$ is defined as
\begin{align}
	D(X) = \int_{-\infty}^{0} \left[w\left(1-F(z)\right) -1\right] \mathrm{d}z +\int_{0}^{\infty} w\left(1-F(z)\right) dz,
\end{align}
where $w:[0,1]\rightarrow [0,1]$ is a weight function that satisfies $w(0)=0$ and $w(1)=1$. 
If $w$ is concave, then DRM is a coherent risk measure. Further, DRMs are equivalent to spectral risk measures if the weight function is increasing and differentiable. In this case, the risk spectrum for the equivalent spectral risk measure is given by $\phi(\beta)=w'(1-\beta),$ for $\beta \in [0,1]$.

It is easy to see that DRMs are of type \ref{type:two}, as they can be treated as a special case of CPT-value. Thus, the bounds we derive for CPT-value estimation can be easily specialized to handle the case of DRMs. While DRMs are equivalent to spectral risk measures, the bounds for spectral risk measures derived earlier require that the risk spectrum be bounded. As noted in Remark \ref{remark:srm-ph}, there are spectral risk measures (or DRMs) that do not satisfy this boundedness requirement, and for such DRMs, one could take the \ref{type:two} route to arrive at estimation bounds. An example of such a risk measure is the proportional hazard transform, which has the following risk spectrum: $\phi(\beta) = \frac{1}{\varkappa} \beta^{\frac{1}{\varkappa}-1}$, with $\varkappa\ge 1$.

	%%%%%%%%%%%%%%%%%%%%%%%%%%%%%%%%%%%%%%%%%%%%%%%%%%%%%%%%%%%%%%%%%%%%% 
	\section{Map of the results}
	\label{sec:results}
	%%%%%%%%%%%%%% EXPECTATION BOUNDS %%%%%%%%%%%%%%%%%%%%%%%%%%%%%%%%%%%%%%%%%%%%%%%%%%%%%%%%%%%%%%%
	In the next three sections, we provide bounds in expectation  as well as concentration bounds for estimates of the risk measures described in the previous section under the assumptions (\ref{ass:c1})-(\ref{ass:heavy}) (as well as \ref{ass:heavyvariant} appearing in the next section) on the underlying  distribution. Since the number of combinations of risk measures and distributional assumptions is rather large, we provide here a tabular summary of all the bounds that will be presented in the succeeding sections. 
	
	\begin{table}[h]
		\caption{Summary of the bounds in expectation of the form $\EE{|\rho_{n}-\rho(X)|}$ for various choices of risk measure $\rho(X)$. Here $X$ is a r.v. satisfying $\E\left(|X|^\beta\right) < \top < \infty$ for some $\beta>1$, and $\rho_n$ is an estimate of $\rho(X)$ using $n$ i.i.d. samples. }
		\label{tab:summary-expec}
		\centering
		\begin{tabular}{c|c|c}
			\toprule
			\textbf{Risk measure} & \textbf{Bound in expectation} & \textbf{Reference}\\\midrule
			OCE  (includes CVaR) & $O\left(\frac{1}{n^{\min\{\frac{1}{2},1-\frac{1}{\beta}\}}}\right)$ & Corollary \ref{cor:oce-expec}\\\midrule
			Spectral risk measure  & $O\left(\frac{1}{n^{\min\{\frac{1}{2},1-\frac{1}{\beta}\}}}\right)$& Corollary \ref{cor:srm-expec} \\\midrule
			Utility-based shortfall risk &    $O\left(\frac{1}{n^{\min\{\frac{1}{2},1-\frac{1}{\beta}\}}}\right)$& Corollary \ref{cor:ubsr-expec} \\\midrule
			Cumulative prospect theory    & $O\left(\frac{1}{\left(n^{\min\{\frac{1}{2},1-\frac{1}{\beta}\}}\right)^{\frac{\beta\alpha-1}{\beta-1}}}\right)$& Corollary \ref{cor:cpt-expec-bd} \\\bottomrule
		\end{tabular}
	\end{table}
	%%%%%%%%%%%%%% SUBGAUSS %%%%%%%%%%%%%%%%%%%%%%%%%%%%%%%%%%%%%%%%%%%%%%%%%%%%%%%%%%%%%%%
	\begin{table}[h]
		\caption{Summary of the concentration bounds of the form $\Prob{|\rho_{n}-\rho(X)|>\epsilon}$, for various choices of risk measure $\rho(X)$. Here $X$ is a sub-Gaussian r.v. with parameter $\sigma$. Here the constants $c_1,c_2$ are functions of the sub-Gaussianity parameter $\sigma$, but an explicit expression is not available. For bounds with explicit constants, see Table \ref{tab:summary-subgauss-const}}
		\label{tab:summary-subgauss}
		\centering
		\begin{tabular}{c|c|c}
			\toprule
			\textbf{Risk measure} & \textbf{Concentration bound} & \textbf{Reference}\\\midrule
			OCE  & $2c_1\exp\left(- \frac{c_2 n \epsilon^2}{L^2}\right)$ & Corollary \ref{prop:oce-t1-conc} \\
			&&with $\beta=2$\\\midrule
			Spectral risk measure  & $2c_1\exp\left(- \frac{c_2 n \epsilon^2}{K^2}\right)$& Corollary \ref{specconcprop} \\
			&&with $\beta=2$\\\midrule
			Utility-based shortfall risk &    $ 2c_1\exp\left(- \frac{c_2k^{2} n \epsilon^2}{K^2}\right)$& Corollary \ref{ubsrconcprop} \\
			&&with $\beta=2$\\\midrule
			Cumulative prospect theory    & $ c_1 \exp\left( - c_2 n \left(\frac{\epsilon- c_3(n)}{L(K^++K^-)\tau_{n}^{1-\alpha}}\right)^{\frac{2}{\alpha}}  \right),$& Corollary \ref{cor:cpt-subgauss} \\
			& $c_3(n) = \frac{\mbox{const} }{\sqrt{\log n} n^{\alpha(1-\alpha)}}$ $\tau_n= \mbox{const} \sqrt{\log n}$.&with $\beta=2$\\\bottomrule
		\end{tabular}
	\end{table}
	%\clearpage \newpage
	
	%%%%%%%%%%%%%% SUBGAUSS - with constants %%%%%%%%%%%%%%%%%%%%%%%%%%%%%%%%%%%%%%%%%%%%%%%%%%%%%%%%%%%%%%%
	\begin{table}[h]
		\caption{Summary of the concentration bounds of the form $\Prob{|\rho_{n}-\rho(X)|>\epsilon}$, for various choices of risk measure $\rho(X)$. Here $X$ is a sub-Gaussian r.v. with parameter $\sigma$. Unlike Table \ref{tab:summary-subgauss}, the bounds here feature explicit constants. }
		\label{tab:summary-subgauss-const}
		\centering
		\begin{tabular}{c|c|c}
			\toprule
			\textbf{Risk measure} & \textbf{Concentration bound} & \textbf{Reference}\\\midrule
			OCE  & $\exp\left(- \frac{n}{256\sigma^2\myexp} \left(\frac{\epsilon}{L}-\frac{512\sigma}{\sqrt{n}}\right)^2\right)$ & Corollary \ref{cor:oce-subgauss}\\\midrule
			Spectral risk measure  & $\exp\left(- \frac{n}{256\sigma^2\myexp} \left(\frac{\epsilon}{K}-\frac{512\sigma}{\sqrt{n}}\right)^2\right)$& Corollary \ref{cor:spec-subgauss} \\\midrule
			Utility-based shortfall risk &    $ \exp\left(- \frac{n}{256\sigma^2\myexp} \left(\frac{k\epsilon}{K}-\frac{512\sigma}{\sqrt{n}}\right)^2\right)$& Corollary \ref{cor:ubsr-subgauss} \\\midrule
			Cumulative prospect theory    & $ \exp\left( - \frac{n}{256\sigma^2\myexp} \right. \times$& Corollary \ref{cor:cpt-subgauss-const} \\
			&$\left.\left(\left(\frac{\epsilon- c_3(n)}{L(K^++K^-)\left[\max\left\{\frac{K^{+}}{k^{+}},\frac{K^{-}}{k^{-}}\right\}\left(\log n\right)^{\frac{1}{2}}\right]^{1-\alpha}}\right)^{\frac{1}{\alpha}} -\frac{512 \sigma}{\sqrt{n}}\right)^2  \right)$ &\\\bottomrule
		\end{tabular}
	\end{table}
	\clearpage\newpage
	%%%%%%%%%%%%%% SUBEXP %%%%%%%%%%%%%%%%%%%%%%%%%%%%%%%%%%%%%%%%%%%%%%%%%%%%%%%%%%%%%%%
	\begin{table}[h]
		\caption{Summary of the concentration bounds of the form $\Prob{|\rho_{n}-\rho(X)|>\epsilon}$, for various choices of risk measure $\rho(X)$. Here $X$ is a sub-exponential r.v. with parameter $c$. }
		\label{tab:summary-subexp}
		\centering
		\begin{tabular}{c|c|c}
			\toprule
			\textbf{Risk measure} & \textbf{Concentration bound} & \textbf{Reference}\\\midrule
			OCE  & $\exp\left(- \frac{n}{\frac{32}{c^2} + \frac{4}{c}\left(\frac{\epsilon}{L}-\frac{384}{c\sqrt{n}}\right)} \left(\frac{\epsilon}{L}-\frac{384}{c\sqrt{n}}\right)^2\right)$ & Corollary \ref{cor:cvar-subexp}\\\midrule
			Spectral risk measure  & $\exp\left(- \frac{n}{\frac{32}{c^2} + \frac{4}{c}\left(\frac{\epsilon}{K}-\frac{384}{c\sqrt{n}}\right)} \left(\frac{\epsilon}{K}-\frac{384}{c\sqrt{n}}\right)^2\right)$& Corollary \ref{cor:spec-subexp} \\\midrule
			Utility-based shortfall risk &    $ \exp\left(- \frac{n}{\frac{32}{c^2} + \frac{4}{c}\left(\frac{k\epsilon}{K}-\frac{384}{c\sqrt{n}}\right)} \left(\frac{k\epsilon}{K}-\frac{384}{c\sqrt{n}}\right)^2\right)$& Corollary \ref{cor:ubsr-subexp} \\\midrule
			Cumulative prospect theory    & $ \exp\Bigg( -  \frac{n}{\frac{32}{c^2} + \frac{4}{c}\left(\left(\frac{\epsilon- \frac{(K^{+}+K^{-})L}{c\alpha n^{\alpha}}}{(K^{+}+K^{-})L \tau_n^{1-\alpha}}\right)^{\frac{1}{\alpha}} - \frac{384}{c\sqrt{n}}\right)}$& Corollary \ref{cor:cpt-subexp}\\
			& $\qquad\times\left(\left(\frac{\epsilon-\frac{(K^{+}+K^{-})L}{c\alpha n^{\alpha}}}{(K^{+}+K^{-})L \tau_n^{1-\alpha}}\right)^{\frac{1}{\alpha}} -\frac{384}{c\sqrt{n}}\right)^2  \Bigg),$&\\
			& $\tau_n= \mbox{const} \sqrt{\log n}$ &\\\bottomrule
		\end{tabular}
	\end{table}
	%%%%%%%%%%%%%%%%%%%%%%%%%%%%%%%%%%%%%%%%%%%%%%%%%%%%%%%%%%%%%%%%%%%%% 
	%%%%%%%%%%%%%%%%%%%%%%%%%%%%%%%%%%%%%%%%%%%%%%%%%%%%%%%%%%%%%%%%%%%%% 
	\section{Bounds in expectation for risk estimation}
	\label{sec:expec-bounds}
	In this section, we provide non-asymptotic bounds, which hold in expectation, for estimation of risk measures. For these bounds, we assume that the underlying distribution has a finite higher-moment bound. More precisely, we assume that the following condition holds.  
\begin{assumption}
	\label{ass:heavyvariant}
	There exists $\beta >1$ such that $\E\left(|X|^\beta\right) < \top < \infty$.
\end{assumption}

\subsection{Bounds for risk measures of type \ref{type:one}}
\begin{theorem}
	\label{thm:t1-expec-bd}
	Suppose $X$ is a r.v. satisfying \ref{ass:heavyvariant}, $\rho:\L\rightarrow \R$ is a risk measure of type \ref{type:one} with parameters $L>0$ and $\kappa>0$ as in \eqref{tonedef}, and $\rho_{n}$ is given by \eqref{eq:rho-est}. Then, for every $n\geq 1$, we have 
	\begin{equation}
		\EE{|\rho_{n}-\rho(X)|}\leq 
		L \left(\frac{2^{\beta+3} \top}{n^{\min\{\frac{1}{2},1-\frac{1}{\beta}\}}}\right)^\kappa.\label{eq:t1-expec-bd}
	\end{equation}
\end{theorem}
\begin{proof}
	See Section \ref{sec:proof-expec-t1}.
\end{proof}

The following corollaries provide bounds in expectation for empirical OCE, SRM and UBSR. The proofs of these corollaries follow in a straightforward fashion using the result in Theorem \ref{thm:t1-expec-bd}, and we omit the proof details.
\begin{corollary}[Bound in expectation: OCE]
	\label{cor:oce-expec}
	Suppose $X$ is a r.v. satisfying \ref{ass:heavyvariant}, $\phi:\R\rightarrow \R $ is a $L$-Lipschitz disutility function, $\oce(X)$ is the OCE risk of $X$ as defined in \eqref{ocedef},  and $\oce_{n}$ is its empirical estimate as in \eqref{ocest}. Then, for every $n\geq 1$, we have 
	\begin{align*}
		&\EE{ \left| \oce_{n} - \oce(X) \right|} 
		\le L \left(\frac{2^{\beta+3} \top}{n^{\min\{\frac{1}{2},1-\frac{1}{\beta}\}}}\right).
	\end{align*}
\end{corollary}

In light of the fact that CVaR is an OCE risk satisfying \eqref{cvrlip}, it is clear that the bound above holds for empirical CVaR with $L=(1-\alpha)^{-1}$.

\begin{corollary}[Bound in expectation: SRM]
	\label{cor:srm-expec}
Suppose $X$ is a r.v. satisfying \ref{ass:heavyvariant}, $\phi:[0,1]\rightarrow [0,\infty) $ is a weighting function uniformly bounded above by $K>0$, $M_{\phi}(X)$ is the OCE risk of $X$ as defined in \eqref{specdef},  and $m_{n,\phi}$ is its empirical estimate as in \eqref{empspecdef}. Then, for every $n\geq 1$, we have 
		\begin{align*}
		&\EE{ \left| m_{n,\phi} - M_{\phi}(X) \right| } 
		\le K \left(\frac{2^{\beta+3} \top}{n^{\min\{\frac{1}{2},1-\frac{1}{\beta}\}}}\right).
	\end{align*}
\end{corollary}
\begin{corollary}[Bound in expectation: UBSR]
	\label{cor:ubsr-expec}
	Suppose $X$ is a r.v. satisfying \ref{ass:heavyvariant}, $l:\R\rightarrow \R $ is a utility function satisfying the assumptions in Lemma \ref{lemma:ubsr-t1}, and $S_{\alpha}(X)$ is the UBSR of $X$ as defined in \eqref{eq:ubsr-def}.  
	Let $\xi_{n,\alpha}$ denote the solution to the constrained problem in \eqref{eq:sr-est}.
	 Then, for every $n\geq 1$, we have
	\begin{align*}
		&\EE{ \left|\xi_{n,\alpha} - S_{\alpha}(X)\right|} 
		\le \frac{K}{k} \left(\frac{2^{\beta+3} \top}{n^{\min\{\frac{1}{2},1-\frac{1}{\beta}\}}}\right),
	\end{align*}
	where $K,k>0$ are as in Lemma \ref{lemma:ubsr-t1}.
\end{corollary}

%%%%%%%%%%%%%%%%%%%%%%%%%%%%%%%%%%%%%%%%%%%%%%%%%%%%%%%%%%%%%%
%%%%%%%%%%%%%%%%%%%%%%%%%%%%%%%%%%%%%%%%%%%%%%%%%%%%%%%%%%%%%%
%%%%%%%%%%%%%%%%%%%%%%%%%%%%%%%%%%%%%%%%%%%%%%%%%%%%%%%%%%%%%%
\subsection{Bounds for risk measures of type \ref{type:two}}
In this subsection, we provide bounds on the expected estimation error for a risk measure of type \ref{type:two}. 

\begin{theorem}
	\label{thm:t2-expec-bd}
	Suppose $X$ is a r.v. satisfying \ref{ass:heavyvariant}, and $\rho:\L\rightarrow \R$ is a risk measure of type \ref{type:two}, with  parameters $\alpha_1, \alpha_2, \alpha_3, L_1, L_2, L_3, K_1, K_2$ as defined in \eqref{ttwodef}. Further, assume that  $\min\{\beta \alpha_2, \beta \alpha_3\} > 1$. Fix $\tau>0$.  Then, for every $n\geq 1$, we have 
	\begin{equation}
		\EE{| \rho_{n,\tau} - \rho(X)|}\leq 
		L _1 \tau^\gamma\left(\frac{2^{\beta+3} \top}{n^{\min\{\frac{1}{2},1-\frac{1}{\beta}\}}}\right)^{\alpha_1}
		+ \frac{L_2 \top^{\alpha_2}}{(\beta\alpha_2-1) (K_1\tau)^{\beta\alpha_2-1}}
		+ \frac{L_3 \top^{\alpha_3}}{(\beta\alpha_3-1) (K_2\tau)^{\beta\alpha_3-1}},\label{eq:t2-expec-bd}
	\end{equation}
	where $\rho_{n,\tau} = \rho(F_n|_{\tau})$.
\end{theorem}
\begin{proof}
	See Section \ref{sec:proof-expec-t2}.
\end{proof}

We now specialize the result in Theorem \ref{thm:t2-expec-bd} for the case of CPT-value. We consider the CPT value estimator based on truncation, defined in \eqref{eq:cpt-est}. 

\begin{corollary}[Bound in expectation: CPT value]
	\label{cor:cpt-expec-bd}
	Assume that the conditions in Lemma \ref{lemma:cpt-t2} hold. Suppose  that $X$ satisfies \ref{ass:heavyvariant} for some $\beta>1$ such that $\beta\alpha>1$, where $\alpha\in(0,1]$ is as in Lemma \ref{lemma:cpt-t2}.
	For each $n\geq 1$, set $\tau_n = \left(n^{\min\{\frac{1}{2},1-\frac{1}{\beta}\}}\right)^{\frac{1}{(\beta-1)}}$, and form the CPT-value estimate $C_n$ using \eqref{eq:cpt-est}. Then  we have
	\begin{align*}
&		\EE{ \left|{C}_n - C(X)\right|}\leq 
\frac{L \top^{\alpha}}{\left(n^{\min\{\frac{1}{2},1-\frac{1}{\beta}\}}\right)^{\frac{\beta\alpha-1}{\beta-1}}} \\
&\qquad\qquad\qquad \times \left(	2^{(\beta+3)\alpha }(K^+ + K^-)
	+ \frac{K^+ }{(\beta\alpha-1) \left(\frac{k^+}{K^+}\right)^{\beta\alpha-1}}
	+ \frac{K^- }{(\beta\alpha-1) \left(\frac{k^-}{K^-}\right)^{\beta\alpha-1}}\right).
	\end{align*}
\end{corollary}
\begin{proof}
	See Section \ref{sec:proof-expec-cpt}. .
\end{proof}

As discussed in subsection \ref{sec:t2}, RDEU and DRM can be viewed as special cases of CPT value, and are hence covered by Corollary \ref{cor:cpt-expec-bd}. 

	%%%%%%%%%%%%%%%%%%%%%%%%%%%%%%%%%%%%%%%%%%%%%%%%%%%%%%%%%%%%%%%%%%%%% 
	%%%%%%%%%%%%%%%%%%%%%%%%%%%%%%%%%%%%%%%%%%%%%%%%%%%%%%%%%%%%%%%%%%%%% 
	%%%%%%%%%%%%%%%%%%%%%%%%%%%%%%%%%%%%%%%%%%%%%%%%%%%%%%%%%%%%%%%%%%%%% 
	\section{Concentration bounds for distributions satisfying \ref{ass:c1}}
	\label{sec:subgauss}
	In this section, we provide concentration bounds for the risk estimates introduced in section \ref{sec:risk-abstract} using the Wasserstein distance bound in Lemma \ref{lemma:wasserstein-dist-bound}. 
The concentration bounds assume that the underlying distribution satisfies condition \ref{ass:c1}, that is, an exponential moment bound with an exponent greater than one. Sub-Gaussian distributions are a popular class of distributions that satisfy this assumption. 
As mentioned before, sub-exponential distributions are treated separately in the next section. 

In the following sections, we relate the estimation errors for both risk measure types to the Wasserstein distance between the EDF and the underlying CDF. The resulting concentration bounds apply to unbounded random variables as long as they belong to one of the three classes mentioned above. 
An alternative approach involves an application of the Dvoretzky-Kiefer-Wolfowitz (DKW) theorem \cite[Chapter 2]{wasserman2006}.
Such an approach has been used to obtain concentration bounds for CVaR/CPT (cf. \cite{thomas2019concentration,prashanth2018tac}, but the sup norm in the DKW inequality will make the resulting bounds applicable only to bounded r.v.s. In contrast, employing the Wasserstein metric allows us to derive concentration bounds for a broader class of  r.v.s. 

\subsection{Bounds for Risk Measures of Type \ref{type:one}}
Given a risk measure $\rho$, a r.v. $X$ with CDF $F$, and $n>0$, we form an empirical estimate $\rho_{n}$ of $\rho(X)$ using \eqref{eq:rho-est}. Recall that the latter estimate is obtained  by applying $ \rho$ to the EDF formed by drawing $n $ samples from $F$. 

\begin{theorem}
	\label{thm:t1-conc-bd}
	Suppose $X$ is a r.v. satisfying \ref{ass:c1}, and $\rho:\L\rightarrow \R$ is a risk measure of type \ref{type:one} with parameters $L>0$ and $\kappa>0$ as in \ref{tonedef}. Then, for every $\epsilon>0$ and $n\geq 1$, we have 
	\begin{equation}
		\Prob{|\rho_{n}-\rho(X)|>\epsilon}\leq c_1\left( \exp\left(-c_2 n \left(\frac{\epsilon}{L}\right)^{\frac{2}{\kappa}}\right)\indic{\epsilon\le L} + \exp\left(-c_3 n \left(\frac{\epsilon}{L}\right)^{\frac{\beta}{\kappa}}\right)\indic{\epsilon> L}\right),\label{c1t1bd}
	\end{equation}
	where the constants $c_1,c_2,$ and $c_3$ are as in Lemma \ref{lemma:wasserstein-dist-bound}.
	\label{c1t1thm}
\end{theorem}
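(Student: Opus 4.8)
The plan is to exploit the deterministic Hölder bound \eqref{tonedef} to reduce the probabilistic statement about $\rho_{n}$ entirely to the Wasserstein concentration already established in Lemma \ref{lemma:wasserstein-dist-bound}. Writing $\rho_{n}=\rho(F_{n})$ and $\rho(X)=\rho(F)$, I would first apply the Type-1 property to the pair $(F_{n},F)$ to obtain the sample-path inequality
\[
|\rho_{n}-\rho(X)|=|\rho(F_{n})-\rho(F)|\le L\,\bigl(W_{1}(F_{n},F)\bigr)^{\kappa}.
\]
This holds for every realization of the i.i.d. sample, so it is a genuine inclusion of events rather than merely an inequality in expectation.

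Next I would convert the target event into an event on the Wasserstein distance alone. Since $t\mapsto L t^{\kappa}$ is strictly increasing on $[0,\infty)$ (as $L>0$ and $\kappa>0$), the event $\{|\rho_{n}-\rho(X)|>\epsilon\}$ is contained in $\{L(W_{1}(F_{n},F))^{\kappa}>\epsilon\}=\{W_{1}(F_{n},F)>(\epsilon/L)^{1/\kappa}\}$, and monotonicity of probability gives
\[
\Prob{|\rho_{n}-\rho(X)|>\epsilon}\le \Prob{W_{1}(F_{n},F)>(\epsilon/L)^{1/\kappa}}.
\]

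Finally I would invoke Lemma \ref{lemma:wasserstein-dist-bound} with accuracy parameter $\tilde{\epsilon}:=(\epsilon/L)^{1/\kappa}$, which is admissible since $\tilde{\epsilon}\ge 0$ whenever $\epsilon>0$. Substituting yields $\tilde{\epsilon}^{2}=(\epsilon/L)^{2/\kappa}$ and $\tilde{\epsilon}^{\beta}=(\epsilon/L)^{\beta/\kappa}$, matching the two exponents appearing in \eqref{c1t1bd}. The threshold conditions transform correctly as well: by the strict monotonicity of the power map, $\tilde{\epsilon}\le 1$ is equivalent to $\epsilon\le L$ and $\tilde{\epsilon}>1$ to $\epsilon>L$, so the two indicator regimes of the lemma align exactly with the two cases in the statement, and the constants $c_{1},c_{2},c_{3}$ carry over unchanged.

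There is essentially no hard step: all the analytic content is front-loaded into the Hölder property (verified for CVaR, SRM and UBSR in Lemmas \ref{lemma:cvar-t1}, \ref{lemma:srm-t1} and \ref{lemma:ubsr-t1}) and into the Wasserstein concentration result imported from \citep{fournier2015rate}. The only point meriting a little care is the bookkeeping of the change of variable $\epsilon\mapsto(\epsilon/L)^{1/\kappa}$, namely checking that it sends the boundary $\epsilon=L$ precisely to $\tilde{\epsilon}=1$ so that the indicator cases split at the correct point; everything else is direct substitution.
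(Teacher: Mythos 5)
Your proposal is correct and follows exactly the paper's own argument: apply the H\"{o}lder property pathwise to deduce that $\{|\rho_{n}-\rho(X)|>\epsilon\}$ is contained in $\{W_{1}(F_{n},F)>(\epsilon/L)^{1/\kappa}\}$, then invoke Lemma \ref{lemma:wasserstein-dist-bound} with the rescaled accuracy. Your extra bookkeeping confirming that the indicator split at $\epsilon=L$ corresponds to $\tilde{\epsilon}=1$ is a detail the paper leaves implicit, but there is no substantive difference.
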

\begin{proof}
	See Section \ref{sec:t1-conc-bd-proof}.
\end{proof}
The result above can be easily specialized for the case of sub-Gaussian distributions by using $\beta=2$ in the bound above.

\begin{remark}
It is easy to see from the first equality in  \eqref{lipwass2} that $X\mapsto \E(X)$ is a \ref{type:one}\  risk measure with parameters $L=\kappa=1$. Theorem \ref{thm:t1-conc-bd} applies with $\beta=2$, and yields the well-known Hoeffding's inequality for the concentration of sample mean in the case of sub-Gaussian r.v.s. 
\end{remark}

Next, we present a concentration bound for the sub-Gaussian case, but with explicit constants. 
\begin{theorem}
	\label{thm:t1-conc-bd-subgauss-const}
	Let $X$ be a sub-Gaussian r.v. with parameter $\sigma>0$. Suppose $\rho:\L\rightarrow \R$ is a risk measure of type \ref{type:one}, with parameters $L$ and $\kappa$. Then, for every $n\geq 1$ and $\epsilon$ such that $\epsilon$ such that $  \frac{512\sigma}{\sqrt{n}}<\left(\frac{\epsilon}{L}\right)^{\frac{1}{\kappa}} < \frac{512\sigma}{\sqrt{n}}+16\sigma\sqrt{\myexp}$, we have 
	\begin{equation}
		\Prob{|\rho_{n}-\rho(X)|>\epsilon}\leq \exp\left(- \frac{n}{256\sigma^2\myexp} \left(\left(\frac{\epsilon}{L}\right)^{\frac{1}{\kappa}}-\frac{512\sigma}{\sqrt{n}}\right)^2\right).
	\end{equation}
\end{theorem}
\begin{proof}
    See Section \ref{sec:t1-conc-bd-subgauss-const-proof}.
\end{proof}

\subsubsection{Bounds for empirical OCE}
We now provide a concentration bound for the empirical OCE estimate \eqref{ocest}, by relating the estimation error $\left| \oce_{n} - \oce(X) \right|$ to the Wasserstein distance between the true and empirical distribution functions, and subsequently invoking Lemma \ref{lemma:wasserstein-dist-bound} that bounds the Wasserstein distance between these two distributions. 
\begin{corollary}[OCE concentration]
	\label{prop:oce-t1-conc}
	Suppose $X$ satisfies \ref{ass:c1} for some $\beta>1$. 
	Let  $\phi:\R\rightarrow \R $ be a $L$-Lipschitz disutility function. Let  $\oce(X)$ be the OCE risk of $X$ as defined in \eqref{ocedef},  and let $\oce_{n}$ be its empirical estimate as in \eqref{ocest}. 	Then, for every $\epsilon >0$ and $n\geq 1$, we have
	\begin{align*}
		\Prob{ \left| \oce_{n}- \oce(X) \right| > \epsilon} &
		\le c_1\left[ \exp\left[-c_2 n (\epsilon/L)^2\right]\indic{\epsilon\le L} \right.\\
		&\quad\left.+ \exp\left[-c_3 n (\epsilon/L)^\beta\right]\indic{\epsilon> L}\right]\!, 
	\end{align*}
	where  the constants $c_1, c_2$ and $c_3$ are as in Lemma \ref{lemma:wasserstein-dist-bound}.
	\label{cvarconcprop}
\end{corollary}
\begin{proof}
	See Section \ref{sec:cvarconcprop-proof}.
\end{proof}	
One can specialize the result above to handle the case of sub-Gaussian random r.v.s. by using $\beta=2$ in the bound above.  However, the constants $c_1, c_2$ are unknown functions of the parameter $\sigma$. 
The following result provides an alternative OCE concentration bound with explicit constants.

\begin{corollary}[OCE concentration with explicit constants]
	\label{cor:oce-subgauss}
	Suppose $X$ is a sub-Gaussian r.v.  with parameter $\sigma$, and let $\phi$ be as in Corollary \ref{cvarconcprop}. Then, for every $n\geq 1$ and  $\epsilon$ such that\\ $  \frac{512\sigma}{\sqrt{n}}<\frac{\epsilon}{L} < \frac{512\sigma}{\sqrt{n}}+16\sigma\sqrt{\myexp}$, we have
	\begin{align*}
		&\Prob{ \left| \oce_{n} - \oce(X) \right| > \epsilon} \le \exp\left(- \frac{n}{256\sigma^2\myexp} \left(\frac{\epsilon}{L}-\frac{512\sigma}{\sqrt{n}}\right)^2\right).
	\end{align*}
\end{corollary}
\begin{proof}
	See Section \ref{sec:oce-subgauss-proof}.
\end{proof}	

As discussed in subsection \ref{ocesec}, CVaR is a special case of an OCE risk, and satisfies the assumptions of corollaries \ref{cvarconcprop} and  \ref{cor:oce-subgauss} with $L=(1-\alpha)^{-1}$. Hence concentration bounds for empirical CVaR follow from the preceding two results. 

The case considered in Corollary \ref{cor:oce-subgauss}, as applied to empirical CVaR, was also treated by \citet{Kolla,prashanth2019concentration}, while the special case of bounded r.v.s has been treated in \citet{brown2007large,wang2010deviation}. 
In terms of dependence on $n$ and $\epsilon$, the tail bound in Corollary \ref{cor:oce-subgauss} is better than the one-sided concentration bound in \citet{Kolla}. In fact, the dependence on $n$ and $\epsilon$ given in Corollary \ref{cor:oce-subgauss} matches that in the case of bounded distributions  \citep{brown2007large,wang2010deviation}. 
More recently,  \cite{prashanth2019concentration} have derived a two-sided concentration result for CVaR estimation. Their bound requires the knowledge of the density in a neighborhood of the true VaR,  while the constants in our bounds depend only on the parameter $\sigma$ of the underlying sub-Gaussian distribution. On the other hand, though our bounds do not depend on density-related information, they probably involve conservative constants. Finally, unlike \cite{prashanth2019concentration},  our bounds allow a multi-armed bandit application in the spirit of the classic UCB algorithm \citep{auer2002finite}, as we require only the knowledge of the sub-Gaussianity parameter in arriving at a confidence term. The reader is referred to Section \ref{sec:bandits} for more details.

\subsubsection{Bounds for empirical spectral risk measure}
In this section, we restrict ourselves to a spectral risk measure $M_{\phi}$ whose associated risk spectrum $\phi$ is bounded. Specifically, we assume that $|\phi(\beta)|\leq K$ for all $\beta\in[0,1]$ for some $K>0$. It immediately follows from Lemma \ref{lemma:srm-t1} that, if $X$ and $Y$ are r.v.s with CDFs $F_{1}$ and $F_{2}$, then 
\begin{equation}
	|M_{\phi}(X)-M_{\phi}(Y)|\leq KW_{1}(F_{1},F_{2}).\label{specriskwass}
\end{equation}
On noting from \eqref{empspecdef} that the empirical estimate $m_{n,\phi}$ of $M_{\phi}(X)$ is simply the spectral risk measure $M_{\phi}$ applied to a r.v. whose CDF is $F_{n}$, we conclude from \eqref{specriskwass} that
\begin{equation}
	|M_{\phi}(X)-m_{n,\phi}|\leq KW_{1}(F,F_{n}).\label{specriskwass2}
\end{equation}
Equation \eqref{specriskwass2} relates the estimation error $|M_{\phi}(X)-m_{n,\phi}|$ to the Wasserstein distance between the true and empirical CDFs of $X$. As in the case of CVaR, invoking Lemma \ref{lemma:wasserstein-dist-bound} provides concentration bounds for the empirical spectral risk measure estimate \eqref{empspecdef}. 
\begin{corollary}[SRM concentration]	\label{specconcprop} 
	Suppose $X$ satisfies \ref{ass:c1} for some $\beta>1$. Let $K>0$ and let $\phi:[0,1]\rightarrow [0,K]$ be a risk spectrum. 
	Then, for every $\epsilon >0$ and $n\geq 1$, we have
	\begin{align*}
		&\Prob{ \left| m_{n,\phi} - M_{\phi}(X) \right| > \epsilon} 
		\le c_1\left[ \exp\left[-c_2 n \left[\frac{\epsilon}{K}\right]^2\right]\indic{\epsilon\le K} + \exp\left[-c_3 n \left[\frac{\epsilon}{K}\right]^{\beta}\right]\indic{\epsilon> K}\right],
	\end{align*}
	where the constants $c_1, c_2$ and $c_3$ are as in Lemma \ref{lemma:wasserstein-dist-bound}.
\end{corollary}
\begin{proof}
	See Section \ref{sec:specconcprop-proof}.
\end{proof}	
As before, setting $\beta=2$ in the bound above handles the special case of sub-Gaussian random r.v.s.

Next, we present an SRM concentration bound with explicit constants.
\begin{corollary}[SRM concentration with explicit constants]
	\label{cor:spec-subgauss}
	Let $X$ be a sub-Gaussian r.v. with parameter $\sigma$, and let $\phi$ be a risk spectrum as in Corollary \ref{specconcprop}. Then, 
	for every $n\geq 1$ and  $\epsilon$ such that $  \frac{512\sigma}{\sqrt{n}}<\frac{\epsilon}{K} < \frac{512\sigma}{\sqrt{n}}+16\sigma\sqrt{\myexp}$, we have
\begin{align*}
	&\Prob{ \left| m_{n,\phi} - M_{\phi}(X) \right| > \epsilon} \le \exp\left(- \frac{n}{256\sigma^2\myexp} \left(\frac{\epsilon}{K}-\frac{512\sigma}{\sqrt{n}}\right)^2\right).
\end{align*}
\end{corollary}
\begin{proof}
	See Section \ref{sec:spec-subgauss-proof}.
\end{proof}	
The bound above is an improvement over the SRM concentration bound derived in \cite{pandey2019estimation} for two reasons. First, our bound is for the un-truncated estimator, while their bound involves a truncated estimate. Second, our bound applies for all $n\ge 1$, while their bound applies only for a sufficiently large number of samples. 

\subsubsection{Bounds for empirical utility-based shortfall risk}
As in the case of OCE and spectral risk measure, using the fact that UBSR is \ref{type:one}\ leads to the following concentration bound.
\begin{proposition}	\label{ubsrconcprop}
	Suppose $X$ satisfies \ref{ass:c1} for some $\beta>1$. Let the utility function $l$ in the definition \eqref{eq:ubsr-def} of $S_\alpha(X)$ satisfy the assumptions in Lemma \ref{lemma:ubsr-t1}, and let $\xi_{n,\alpha}$ denote the solution to the constrained problem in \eqref{eq:sr-est}.
	Then, for every $\epsilon >0$ and $n\geq 1$, we have
	\begin{align*}
		&\Prob{ \left| \xi_{n,\alpha} - S_{\alpha}(X) \right| > \epsilon} 
		\le c_1\left[ \exp\left[-c_2 n \left[\frac{k\epsilon}{K}\right]^2\right]\indic{\epsilon\le \frac{K}{k}} + \exp\left[-c_3 n \left[\frac{k\epsilon}{K}\right]^{\beta}\right]\indic{\epsilon> \frac{K}{k}}\right], 
	\end{align*}
	where the constants $c_1, c_2$ and $c_3$ are as in Lemma \ref{lemma:wasserstein-dist-bound}, while $K,k>0$ as are as in Lemma \ref{lemma:ubsr-t1}.
\end{proposition}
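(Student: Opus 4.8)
The plan is to obtain this bound as a direct specialization of the general Type-1 concentration result, Theorem \ref{thm:t1-conc-bd}, exactly as was done for CVaR in Proposition \ref{cvarconcprop} and for the spectral risk measure in Proposition \ref{specconcprop}. The argument therefore has three short steps, and I expect none of them to present a genuine difficulty.

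First I would invoke Lemma \ref{lemma:ubsr-t1}: since the utility function $l$ is assumed $K$-Lipschitz, that lemma gives $|S_\alpha(X) - S_\alpha(Y)| \le K\, W_1(F_X, F_Y)$ for every pair of r.v.s, which is precisely the statement that $S_\alpha$ is a \tone\ risk measure with H\"{o}lder parameters $L = K$ and $\kappa = 1$. Second, I would identify the estimator $\xi_{n,\alpha}$ with the generic empirical estimate $\rho_n$ of \eqref{eq:rho-est}. As noted in the discussion following \eqref{eq:sr-est}, solving the sample-average constrained problem \eqref{eq:sr-est} is equivalent to computing the UBSR value of a r.v. distributed according to the EDF $F_n$; hence $\xi_{n,\alpha} = S_\alpha(F_n)$, which is exactly $\rho_n$ for the choice $\rho = S_\alpha$. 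This is the only step that is not purely mechanical, and since it has already been established in the text, no new work is required here.

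Finally, with $\rho = S_\alpha$ shown to be \tone\ with $L = K$ and $\kappa = 1$, I would apply Theorem \ref{thm:t1-conc-bd}. Substituting these parameter values into \eqref{c1t1bd} makes the exponents $2/\kappa = 2$ and $\beta/\kappa = \beta$ and turns $\epsilon/L$ into $\epsilon/K$, yielding the claimed two-regime bound verbatim, with the constants $c_1, c_2, c_3$ inherited from Lemma \ref{lemma:wasserstein-dist-bound}.

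The main obstacle, such as it is, lies entirely in the already-completed Lemma \ref{lemma:ubsr-t1}, namely the repeated inf--sup interchange (carried out via the $1/m$ slack argument) needed to transfer the Wasserstein bound through the dual representation $S_\alpha(X) = \inf_\xi \sup_\mu L_\alpha(\mu,\xi)$ of UBSR. Granting that lemma, the present proposition is an immediate corollary of the general Type-1 theorem, and requires no separate treatment of the sub-Gaussian, sub-exponential, or heavy-tailed cases beyond what Lemma \ref{lemma:wasserstein-dist-bound} already supplies.
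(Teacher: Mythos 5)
Your proposal is correct and follows exactly the paper's own route: the paper likewise observes via Lemma \ref{lemma:ubsr-t1} that UBSR is \tone\ with $L=K$ and $\kappa=1$, identifies $\xi_{n,\alpha}$ with the generic estimate \eqref{eq:rho-est}, and applies Theorem \ref{thm:t1-conc-bd}. Nothing further is needed.
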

\begin{proof}
	See Section \ref{sec:ubsrconcprop-proof}.
\end{proof}
The specialization to sub-Gaussian r.v.s is immediate. Next, we present an UBSR concentration bound with explicit constants.
\begin{corollary}
	\label{cor:ubsr-subgauss}
	Let $X$ be a sub-Gaussian r.v.  with parameter $\sigma$, and let $l$ be a utility function as in Proposition \ref{ubsrconcprop}. Then, 
	for every $n\geq 1$ and  $\epsilon$ such that $  \frac{512\sigma}{\sqrt{n}}<\frac{\epsilon k}{K} < \frac{512\sigma}{\sqrt{n}}+16\sigma\sqrt{\myexp}$, we have 
	\begin{align*}
		&\Prob{ \left| \xi_{n,\alpha} - S_{\alpha}(X) \right| > \epsilon} \le \exp\left(- \frac{n}{256\sigma^2\myexp} \left(\frac{k\epsilon}{K}-\frac{512\sigma}{\sqrt{n}}\right)^2\right).
	\end{align*}
\end{corollary}
\begin{proof}
	See Section \ref{sec:ubsr-subgauss-proof}.
\end{proof}	

\subsection{Bounds for Risk Measures of Type \ref{type:two}}
We consider two cases in this subsection. The first case is when the distribution $F$ has bounded support, while the second case is that of sub-Gaussian distributions. Although the first case is subsumed under the case of sub-Gaussian distributions, we still consider it separately as the concentration bound that we obtain is stronger than the one in the sub-Gaussian case.

\subsubsection{Distributions with bounded support}
Our first result concerns the risk estimator \eqref{eq:rho-est} for a Type \ref{type:two} risk measure applied to a distribution with bounded support. 
\begin{theorem}
	\label{thm:t2conc-bdd}
	Suppose $X$ takes values in $[-B_2,B_1]$ a.s. for some $B_{1},B_{2}\geq 0$ such that at least one of $B_{1},B_{2}$ is positive. Let  $\rho:\L\rightarrow \R$ be a risk measure of type \ref{type:two}. For each $n$, let 
	$\rho_n = \rho(F_n)$, where $F$ is the CDF of $X$.
	Then, for every $\epsilon>0$ and $n\geq 1$,  we have 
	\begin{align*}
		&\Prob{\left|  \rho_n - \rho(X)\right| > \epsilon}\le c_1\exp\left(-c_2 n \left[\frac{\epsilon}{L_1\tau^\gamma}\right]^{\frac{2}{\alpha_1}}\right), % \\
	\end{align*} 
where $\tau = \max\left\{\frac{B_1}{K_1},\frac{B_2}{K_2}\right\}$,  $K_{1},K_{2},L_1,\gamma,\alpha_1$ are as in the definition \eqref{ttwodef} of a Type \ref{type:two}\ risk measure,    and $c_1, c_2$ are constants that depend on $B_1, B_2$.
\end{theorem}
\begin{proof}
	See Section \ref{sec:t2conc-bdd-proof}.
\end{proof}
As mentioned earlier, the constants $c_1, c_2$ are not explicitly known, and this lack of knowledge hinders bandit applications. To handle such an application, we next present a concentration bound with explicit constants.
\begin{theorem}
	\label{thm:t2conc-bdd-const}
	Assume that the  conditions of Theorem \ref{thm:t2conc-bdd} hold. Then, for every $n\geq 1$
	and $\epsilon$ such that $  \frac{256(B_1+B_2)}{\sqrt{n}}<\left(\frac{\epsilon}{L_1\tau^{\gamma}}\right)^{\frac{1}{\alpha_1}} < \frac{256(B_1+B_2)}{\sqrt{n}}+8(B_1+B_2)\sqrt{\myexp}$,  we have 
	\begin{align*}
		&\Prob{\left|  \rho_n - \rho(X)\right| > \epsilon}\le \exp\left(-  \frac{n}{64 \myexp(B_1 + B_2)^2}\left( \left(\frac{\epsilon}{L_1\tau^{\gamma}}\right)^{\frac{1}{\alpha_1}}-\frac{256(B_1+B_2)}{\sqrt{n}}\right)^2\right), % \\
	\end{align*} 
	where $L_1,\gamma$ and $ \tau$ are as specified in Theorem \ref{thm:t2conc-bdd}.
\end{theorem}
\begin{proof}
	See Section \ref{sec:t2conc-bdd-const-proof}.
\end{proof}

We now turn our attention to  deriving a concentration bound for the CPT estimator in \eqref{eq:cpt-est} for the case of a bounded r.v. 
To put things in context, in \citet{prashanth2018tac}, the authors derive a concentration bound for the same estimator assuming that the underlying distribution has bounded support, and for this purpose, they employ the DKW theorem. 
Interestingly, we are able to provide a matching bound for the case of distributions with bounded support, using a proof technique that relates the the estimation error $\left| C_n -C(X)\right|$ to the Wasserstein distance between the empirical and true CDF, and this is the content of the proposition below.
\begin{corollary}\textbf{\textit{(CPT concentration for bounded r.v.s)}}
	\label{cor:cpt-bounded}
	Suppose $X$ is a r.v. that assumes values in $[-B_{2},B_{1}]$ a.s., where $B_{1},B_{2}\geq 0$, and at least one of $B_{1},B_{2}$ is positive. 
	Further, assume that the conditions of Lemma \ref{cpttt2lem} on the CPT value defined by \eqref{eq:cpt-val} hold.
	Then, for every $\epsilon>0$ and $n\geq 1$, we have
	\[\Prob{\left| C_n - C(X)\right|>\epsilon}  \le 
	c_1\exp\left(-c_2 n \left[\frac{\epsilon}{L\tau^{1-\alpha}}\right]^{\frac{2}{\alpha}}\right), \]
	where $\tau=\max\left\{B_{1}\frac{K^{+}}{k^{+}},B_{2}\frac{K^{-}}{k^{-}}\right\}$, the constants $L, K^+, k^+, K^-, k^-$ and $\alpha$ are as in Lemma \ref{cpttt2lem}, and $c_1,$ and  $c_2$ are as in Theorem \ref{thm:t2conc-bdd}.
\end{corollary}
\begin{proof}
	See Section \ref{sec:cpt-bounded-proof}.
\end{proof}

It is apparent from the bound above that, given $\delta \in (0,1)$, if the number of samples $n$ is of the order $O\left(\frac{1}{\epsilon^{2/\alpha}}\log\left(\frac{1}{\delta}\right)\right)$, then $\left| C_n - C(X)\right|<\epsilon$ with probability $1-\delta$.

Next, we provide a CPT concentration bound with explicit constants.
\begin{corollary}\textbf{\textit{(CPT concentration for bounded r.v.s with explicit constants)}}
	\label{cor:cpt-bounded-const}
	Under the  conditions of Corollary \ref{cor:cpt-bounded}, for every $n\geq 1$ and $\epsilon$ such that $  \frac{256(B_1+B_2)}{\sqrt{n}}<\left(\frac{\epsilon}{L\tau^{1-\alpha}}\right)^{\frac{1}{\alpha}} < \frac{256(B_1+B_2)}{\sqrt{n}}+8(B_1+B_2)\sqrt{\myexp}$, we have
	\[\Prob{\left| C_n - C(X)\right|>\epsilon}  \le 
	\exp\left(-  \frac{n}{64\myexp(B_1 + B_2)^2}\left( \left(\frac{\epsilon}{L\tau^{1-\alpha}}\right)^{\frac{1}{\alpha}}-\frac{256(B_1+B_2)}{\sqrt{n}}\right)^2\right), \]
	where $\tau,\alpha$ and $L$ are as in Corollary \ref{cor:cpt-bounded}.
\end{corollary}
\begin{proof}
	See Section \ref{sec:cpt-bounded-const-proof}.
\end{proof}
\subsubsection{Distributions satisfying \ref{ass:c1}}

Next, we provide a concentration bound for type \ref{type:two}\ risk measures in the case of r.v.s that satisfy \ref{ass:c1}. 

\begin{theorem}
	\label{thm:t2conc-subGauss}
	Let $X$ be a r.v. with a distribution $F$ that satisfies \ref{ass:c1} for some $\beta>1$, and suppose $\rho:\L\rightarrow \R$ is a risk measure of type \ref{type:two}\ with parameters $\alpha_1, \alpha_2, \alpha_3, L_1, L_2, L_3, K_1, K_2$ as defined in \eqref{ttwodef}. 
	Fix $\tau>0$ and let $\rho_{n,\tau} = \rho(F_n|_{\tau})$.
	Fix $\epsilon>0$ such that 
	\begin{align}\epsilon' = \epsilon - \frac{L_2}{\left(K_1\tau\right)^{\beta-1} \alpha_2\gamma (\beta-1)} \exp\left(-\alpha_2 \gamma \left(K_1\tau\right)^{\beta}\right) - \frac{L_3}{\left(K_2\tau\right)^{\beta-1} \alpha_3\gamma (\beta-1)} \exp\left(-\alpha_3 \gamma \left(K_2\tau\right)^{\beta}\right)
	\label{eprimedef}
	\end{align} is positive.
	Then, for every $n\geq 1$,  we have 
	\begin{align}
		&\Prob{\left|  \rho_{n,\tau} - \rho(X)\right| > \epsilon}\le c_1\exp\left(-c_2 n\left(\frac{\epsilon'}{L_1\tau^\gamma} \right)^{\frac{2}{\alpha_1}}\right),
		\label{cptbd1}% \\
	\end{align} 
	where $c_1$ and $c_2$ are constants that depend on the parameters $\beta,\gamma$ and $\top$ specified in \ref{ass:c1}.
\end{theorem}
\begin{proof}
	See Section \ref{sec:t2conc-subGauss-proof}.
\end{proof}	

Next, we apply Theorem \ref{thm:t2conc-subGauss} to obtain a CPT concentration result for a r.v. satisfying \ref{ass:c1}. For this result, we consider the CPT value estimator based on truncation defined in \eqref{eq:cpt-est}. The cut-off value for truncation is chosen as a function of the sample size to get an exponential decay in the tail bound. 

\begin{corollary}\textbf{\textit{(CPT concentration)}}
	\label{cor:cpt-subgauss}
	Assume that the conditions in Lemma \ref{lemma:cpt-t2} hold, and suppose  that $X$ satisfies \ref{ass:c1} for some $\beta>1$.
	For each $n\geq 1$, set 
	\[\tau_n = \left((\log n)^{\frac{1}{\beta}} + 1\right)\max\left\{\frac{K^{+}}{k^{+}},\frac{K^{-}}{k^{-}}\right\},\] 
	$c_3(n) = \frac{L(K^+ + K^-) }{\left(\log n\right)^{\frac{\beta-1}{\beta}} \alpha(1-\alpha)(\beta-1)n^{\alpha(1-\alpha)}}$, and form the CPT-value estimate $C_n$ using \eqref{eq:cpt-est}, where $L,K^+,k^+,K^-,k^-$ and $\alpha$ are as in Lemma \ref{lemma:cpt-t2}.  Then,  for every $n\geq 1$ and  $\epsilon > c_3(n)$, we have 
% 	\begin{align*}
% 		\Prob{\left|{C}_n - C(X)\right|> \epsilon}
% 		\le c_1 \exp\left( - c_2 n \left(\frac{\epsilon- c_3(n)}{L(K^++K^-)\max\left\{\frac{K^{+}}{k^{+}},\frac{K^{-}}{k^{-}}\right\}\left(\log n\right)^{\frac{1}{\beta}}}\right)^{\frac{2}{\alpha}}  \right),
% 	\end{align*}
	\begin{align*}
		\Prob{\left|{C}_n - C(X)\right|> \epsilon}
		\le c_1 \exp\left( - c_2 n \left(\frac{\epsilon- c_3(n)}{L(K^++K^-)\tau_{n}^{1-\alpha}}\right)^{\frac{2}{\alpha}}  \right),
	\end{align*}
	where $c_1, c_2$ are constants that depend on the parameters $\beta,\gamma$ and $\top$ specified in \ref{ass:c1}. 
\end{corollary}
\begin{proof}
	See Section \ref{sec:cpt-subgauss-proof}.
\end{proof}

In Proposition 3 of \cite{prashanth2018tac}, the authors provide a $\left[2n \myexp^{-n^\frac{\alpha}{2+\alpha}} 
+ 2 \myexp^{-n^{\frac{\alpha}{2+\alpha}}\left(\frac{\epsilon}{2H}\right)^{\frac{2}{\alpha}}}\right]$ bound for CPT-estimation in the case where the underlying distribution is sub-Gaussian. It is apparent that the bound we obtain with $\beta=2$ in the theorem above is significantly improved in comparison to the bound of \cite{prashanth2018tac}. 

In \cite{bhat2019concentration}, a tail bound for CPT-value estimation is presented for the sub-Gaussian case. In comparison, the bound we have in Corollary \ref{cor:cpt-subgauss} applies to the more general class of distributions satisfying \ref{ass:c1}. More importantly, our tail bound is applicable for all $n\ge 1$, while the bound in \cite{bhat2019concentration} applies only when the number of samples $n$ is sufficiently large.

Our next result is a  variation on Corollary \ref{cor:cpt-subgauss}, where we specify the constants, but under the slighly more restrictive assumption of sub-Gaussianity.

\begin{proposition}
\textbf{\textit{(CPT concentration bound with explicit constants for sub-Gaussian r.v.s)}}
	\label{cor:cpt-subgauss-const}
	Assume that the conditions in Lemma \ref{lemma:cpt-t2} hold, and suppose  that $X$ is sub-Gaussian with parameter $\sigma$. For each $n\geq 1$, choose $\tau_n$ and $c_{3}(n)$ as given in Corollary \ref{cor:cpt-subgauss} by setting $\beta=2$, and form the CPT-value estimate $C_n$ using \eqref{eq:cpt-est}, where $L,K^+,k^+,K^-,k^-$ and $\alpha$ are as in Lemma \ref{lemma:cpt-t2}. Then, 
	for every  $n\geq 1$ and $\epsilon$ such that $\frac{512\sigma}{\sqrt{n}}<\left(\frac{\epsilon - c_3(n)}{c_4(n)}\right)^{\frac{1}{\alpha}}< \frac{512\sigma}{\sqrt{n}}+16\sigma\sqrt{\myexp}$, we have 
	\begin{align*}
		&\Prob{\left|{C}_n - C(X)\right|> \epsilon}
		\le\exp\left( - \frac{n}{256\sigma^2\myexp} \left(\left(\frac{\epsilon- c_3(n)}{c_4(n)}\right)^{\frac{1}{\alpha}} -\frac{512\sigma}{\sqrt{n}}\right)^2  \right),
	\end{align*}
	where 
	$c_4(n)=L(K^++K^-)\tau_{n}^{1-\alpha}$.
\end{proposition}
\begin{proof}
	See Section \ref{sec:cpt-subgauss-const-proof}.
\end{proof}

	%%%%%%%%%%%%%%%%%%%%%%%%%%%%%%%%%%%%%%%%%%%%%%%%%%%%%%%%%%%%%%%%%%%%% 
	%%%%%%%%%%%%%%%%%%%%%%%%%%%%%%%%%%%%%%%%%%%%%%%%%%%%%%%%%%%%%%%%%%%%% 
	%%%%%%%%%%%%%%%%%%%%%%%%%%%%%%%%%%%%%%%%%%%%%%%%%%%%%%%%%%%%%%%%%%%%% 
	\section{Concentration bounds for distributions satisfying \ref{ass:subexp}}
	\label{sec:subexp}
	In this section, we present concentration bounds for estimators of the risk measures considered so far under the assumption that the underlying distribution is sub-exponential. First, we consider risk measures of type \ref{type:one}. 

\subsection{Bounds for Risk Measures of Type \ref{type:one}}
Using Lemma \ref{lemma:wasserstein-dist-bound-subexp}, we provide an analogue of Theorem \ref{thm:t1-conc-bd} for the case of sub-exponential r.v.s below.
\begin{theorem}
	\label{thm:t1-conc-bd-subexp}
	Let $X$ be a r.v. satisfying \ref{ass:subexp} with parameter $c$. Suppose $\rho:\L\rightarrow \R$ is a risk measure of type \ref{type:one}, with parameters $L$ and $\kappa$. Then, for every $n\geq 1$ and $\epsilon$ satisfying $\left(\frac{\epsilon}{L}\right)^{\frac{1}{\kappa}} > \frac{384}{c\sqrt{n}}$, we have 
	\begin{equation}
		\Prob{|\rho_{n}-\rho(X)|>\epsilon}\leq \exp\left(- \frac{n}{\frac{32}{c^2} + \frac{4}{c}\left(\left(\frac{\epsilon}{L}\right)^{\frac{1}{\kappa}}-\frac{384}{c\sqrt{n}}\right)} \left(\left(\frac{\epsilon}{L}\right)^{\frac{1}{\kappa}}-\frac{384}{c\sqrt{n}}\right)^2\right).
	\end{equation}
\end{theorem}
\begin{proof}
	See Section \ref{sec:t1-conc-bd-subexp-proof}.
\end{proof}

The following corollaries regarding concentration of empirical OCE, SRM and UBSR follow in a straightforward fashion using the result above. We omit the proof details.
\begin{corollary}[OCE concentration]
	\label{cor:cvar-subexp}
	Let $X$ be a r.v. satisfying \ref{ass:subexp} with parameter $c$, and let  $\phi:\R\rightarrow \R $ be a $L$-Lipschitz disutility function. Let  $\oce(X)$ be the OCE risk of $X$ as defined in \eqref{ocedef},  and let $\oce_{n}$ be its empirical estimate as in \eqref{ocest}. 
	Then, for every $n\ge 1$ and every $\epsilon$ satisfying $\frac{\epsilon}{L} > \frac{384}{c\sqrt{n}}$, we have
	\begin{align*}
		&\Prob{ \left| \oce_{n} - \oce(X) \right| > \epsilon} 
		\le \exp\left(- \frac{n}{\frac{32}{c^2} + \frac{4}{c}\left(\frac{\epsilon}{L}-\frac{384}{c\sqrt{n}}\right)} \left(\frac{\epsilon}{L}-\frac{384}{c\sqrt{n}}\right)^2\right).
	\end{align*}
\end{corollary}

Recall that CVaR is an OCE risk satisfying \eqref{cvrlip}. It is therefore clear that the bound above holds for empirical CVaR with $L=(1-\alpha)^{-1}$.

\begin{corollary}[SRM concentration]
	\label{cor:spec-subexp}
Let $X$ be a r.v. satisfying \ref{ass:subexp} with parameter $c$. 
	Let $K>0$ and let $\phi:[0,1]\rightarrow [0,K]$ be a risk spectrum.
	Then, for all $n\ge 1$ and every $\epsilon$ satisfying $\frac{\epsilon}{K} > \frac{384}{c\sqrt{n}}$, we have
	\begin{align*}
		&\Prob{ \left| m_{n,\phi} - M_{\phi}(X) \right| > \epsilon} 
		\le \exp\left(- \frac{n}{\frac{32}{c^2} + \frac{4}{c}\left(\frac{\epsilon}{K}-\frac{384}{c\sqrt{n}}\right)} \left(\frac{\epsilon}{K}-\frac{384}{c\sqrt{n}}\right)^2\right).
	\end{align*}
\end{corollary}
\begin{corollary}[UBSR concentration]
	\label{cor:ubsr-subexp}
Let $X$ be a r.v. satisfying \ref{ass:subexp} with parameter $c$.
	Let the utility function $l$ in the definition \eqref{eq:ubsr-def}  of $S_\alpha(X)$ satisfy the assumptions of Lemma \ref{lemma:ubsr-t1}. 
	For each $n\geq 1$, let $\xi_{n,\alpha}$ denote the solution to the constrained problem in \eqref{eq:sr-est}.
	Then, for all $n\ge 1$ and every $\epsilon$ satisfying $\frac{k\epsilon}{K} > \frac{384}{c\sqrt{n}}$, we have
	\begin{align*}
		&\Prob{ \left|\xi_{n,\alpha} - S_{\alpha}(X)\right| > \epsilon} 
		\le \exp\left(- \frac{n}{\frac{32}{c^2} + \frac{4}{c}\left(\frac{k\epsilon}{K}-\frac{384}{c\sqrt{n}}\right)} \left(\frac{k\epsilon}{K}-\frac{384}{c\sqrt{n}}\right)^2\right),
	\end{align*}
	where the constants $K,k>0$ are as in Lemma \ref{lemma:ubsr-t1}.
\end{corollary}

\subsection{Bounds for Risk Measures of Type \ref{type:two}}
We now provide an analogue of Theorem \ref{thm:t2conc-subGauss} for the case of sub-exponential r.v.s below.
\begin{theorem}
	\label{thm:t2conc-subExp}
	Let $X$ be a r.v. having CDF $F$ and  satisfying \ref{ass:subexp} with parameter $c$. Suppose $\rho:\L\rightarrow \R$ is a risk measure of type \ref{type:two}\ with parameters $\alpha_1, \alpha_2, \alpha_3, L_1, L_2, L_3, K_1, K_2$ as defined in \eqref{ttwodef}. 
	Fix $\tau>0$, and let $\rho_n = \rho(F_n|_{\tau})$.
	Fix $\epsilon>0$ such that  $\epsilon' \triangleq \epsilon - \frac{L_2}{c \alpha_2} \exp\left(-\alpha_2 c K_1\tau\right) - \frac{L_3}{c \alpha_3} \exp\left(-\alpha_3 c K_2\tau\right)>0$ and
	$\left(\frac{\epsilon'}{L_1\tau^{\gamma}}\right)^{\frac{1}{\alpha_1}} > \frac{384}{c\sqrt{n}}$.
	Then, for every $n\geq 1$,  we have 
	\begin{align}
		&\Prob{\left|  \rho_n - \rho(X)\right| > \epsilon}\le \exp\left(-\frac{n}{\frac{32}{c^2} + \frac{4}{c}\left(\left(\frac{\epsilon'}{L_1\tau^{\gamma}}\right)^{\frac{1}{\alpha_1}}-\frac{384}{c\sqrt{n}}\right)}\left(\left(\frac{\epsilon'}{L_1\tau^\gamma} \right)^{\frac{1}{\alpha_1}} -\frac{384}{c\sqrt{n}}\right)^2 \right).
		\label{thm45ineq}
		% \\
	\end{align} 
\end{theorem}
\begin{proof}
	See Section \ref{sec:t2conc-subExp-proof}.
\end{proof}
Finally, we provide a concentration bound for CPT estimation, when the underlying distribution is sub-exponential. 
\begin{corollary}\textbf{\textit{(CPT concentration for sub-exponential r.v.s)}}
	\label{cor:cpt-subexp}
	Assume that the conditions of Lemma \ref{lemma:cpt-t2} hold. Let $X$ be a r.v. satisfying \ref{ass:subexp} with parameter $c$. For each $n\geq 1$, 
	set 
	\[\tau_n = \left(\frac{\log n}{c} + 1\right)\max\left\{\frac{K^{+}}{k^{+}},\frac{K^{-}}{k^{-}}\right\},\] 
	and form the CPT-value estimate $C_n$ using \eqref{eq:cpt-est}. Then, 
	\begin{align*}
		&\Prob{\left|{C}_n - C(X)\right|> \epsilon}
		\le\\&  \exp\left( -  \frac{n}{\frac{32}{c^2} + \frac{4}{c}\left(\left(\frac{\epsilon- \frac{(K^{+}+K^{-})L}{c\alpha n^{\alpha}}}{(K^{+}+K^{-})L \tau_n^{1-\alpha}}\right)^{\frac{1}{\alpha}} - \frac{384}{c\sqrt{n}}\right)} \left(\left(\frac{\epsilon-\frac{(K^{+}+K^{-})L}{c\alpha n^{\alpha}}}{(K^{+}+K^{-})L \tau_n^{1-\alpha}}\right)^{\frac{1}{\alpha}} -\frac{384}{c\sqrt{n}}\right)^2  \right)
	\end{align*}
	holds  for every $\epsilon>\frac{(K^{+}+K^{-})L}{c\alpha n^{\alpha}}$ satisfying
	\[\left(\frac{\epsilon- \frac{(K^{+}+K^{-})L}{c\alpha n^{\alpha}}}{(K^{+}+K^{-})L \tau_n^{1-\alpha}}\right)^{\frac{1}{\alpha}} > \frac{384}{c\sqrt{n}}.\] 
\end{corollary}
\begin{proof}
	See Section \ref{sec:cpt-subExp-proof}.
\end{proof}

	%%%%%%%%%%%%%%%%%%%%%%%%%%%%%%%%%%%%%%%%%%%%%%%%%%%%%%%%%%%%%%%%%%%%% 
	%%%%%%%%%%%%%%%%%%%%%%%%%%%%%%%%%%%%%%%%%%%%%%%%%%%%%%%%%%%%%%%%%%%%% 
	%%%%%%%%%%%%%%%%%%%%%%%%%%%%%%%%%%%%%%%%%%%%%%%%%%%%%%%%%%%%%%%%%%%%% 
	\section{Concentration bounds for distributions satisfying \ref{ass:heavy}}
	\label{sec:heavytailed}
	In this section, we consider heavy-tailed distributions that satisfy \ref{ass:heavy}, i.e.,  a higher moment bound $\E\left(|X|^\beta\right) < \top < \infty$ for some $\beta >2$.

We consider risk measures of type \ref{type:one}, and derive concentration bounds for the empirical estimate of such a risk measure. 
Using Lemma \ref{lemma:wasserstein-dist-bound-heavy}, we provide below an analogue of Theorem \ref{thm:t1-conc-bd} for the case of  r.v.s satisfying \ref{ass:heavy}.
\begin{theorem}
	\label{thm:t1-conc-bd-heavy}
	Suppose $X$ is a r.v. that satisfies \ref{ass:heavy} with parameter $\beta$, and $\rho:\L\rightarrow \R$ is a risk measure of type \ref{type:one} with parameters $L$ and $\kappa$. Then, for every $\epsilon>0$, $n\geq 1$ and $\eta\in(0,\beta)$, we have 
	\begin{equation}
		\Prob{|\rho_{n}-\rho(X)|>\epsilon}\leq c_1\left( \exp\left(-c_2 n \left(\frac{\epsilon}{L}\right)^{\frac{2}{\kappa}}\right)\indic{\epsilon\le L} 
		+ n\left(n\left(\frac{\epsilon}{L}\right)^\frac{1}{\kappa}\right)^{-(\beta-\eta)}\indic{\epsilon> L}\right),
	\end{equation}
	where the constants $c_1,c_2$ are as in Lemma \ref{lemma:wasserstein-dist-bound-heavy}.
	%, while the constants $L>0$ and $\kappa>0$ are as in (\ref{tonedef}).
\end{theorem}
\begin{proof}
	See Section \ref{sec:t1-conc-bd-heavy-proof}. 
\end{proof}

The following corollaries on concentration of empirical CVaR, SRM and UBSR follow in a straightforward fashion from the result above, and we omit the proof details.

\begin{corollary}[OCE concentration]
	Suppose $X$ is a r.v. that satisfies \ref{ass:heavy} with parameter $\beta$. Let  $\phi:\R\rightarrow \R $ be a $L$-Lipschitz disutility function. Let  $\oce(X)$ be the OCE risk of $X$ as defined in \eqref{ocedef},  and let $\oce_{n}$ be its empirical estimate as in \eqref{ocest}.
	Then, for every $n\geq 1$, $\epsilon >0$ and $\eta\in(0,\beta)$, we have
	\begin{align*}
		&\Prob{ \left| \oce_{n} - \oce(X) \right| > \epsilon} 
		\le c_1\left[ \exp\left[-c_2 n (\epsilon/L)^2\right]\indic{\epsilon\le L} \right.\\
		&\left.\qquad\qquad\qquad\qquad\qquad\qquad	+ n\left(n\epsilon/L\right)^{-(\beta-\eta)}\indic{\epsilon> L}\right],
	\end{align*}
	where the constants $c_1$ and $c_2$ are as in Lemma \ref{lemma:wasserstein-dist-bound-heavy}.
\end{corollary}

In light of the fact that CVaR is an OCE risk satisfying \eqref{cvrlip}, it is clear that the bound above holds for empirical CVaR with $L=(1-\alpha)^{-1}$.

\begin{corollary}[SRM concentration]
	Suppose $X$ is a r.v. that satisfies \ref{ass:heavy} with parameter $\beta$.
	Let $K>0$ and let $\phi:[0,1]\rightarrow [0,K]$ be a risk spectrum. 
	Then, for every $n\geq 1$, $\epsilon >0$ and $\eta\in(0,\beta)$, we have
	\begin{align*}
		&\Prob{ \left| m_{n,\phi} - M_{\phi}(X) \right| > \epsilon} 
		\le c_1\left[ \exp\left(-c_2 n \left[\frac{\epsilon}{K}\right]^2\right)\indic{\epsilon\le K} 
		+ n\left(n\left[\frac{\epsilon}{K}\right]\right)^{-(\beta-\eta)}\indic{\epsilon> K}\right],
	\end{align*}
	where the constants $c_1$ and $c_2$ are as in Lemma \ref{lemma:wasserstein-dist-bound-heavy}.
\end{corollary}
\begin{corollary}[UBSR concentration]
	Suppose $X$ is a r.v. that satisfies \ref{ass:heavy} with parameter $\beta$.
	Let the utility function $l$ in the definition \eqref{eq:ubsr-def} of $S_\alpha(X)$ satisfy the assumptions in Lemma \ref{lemma:ubsr-t1}.
	For every $n\geq 1$, let $\xi_{n,\alpha}$ denote the solution to the constrained problem in \eqref{eq:sr-est}.
	Then, for every $n\geq 1$, $\epsilon >0$ and $\eta\in(0,\beta)$, we have
	\begin{align*}
		&\Prob{ \left|\xi_{n,\alpha} - S_{\alpha}(X)\right| > \epsilon} 
		\le c_1\left[ \exp\left(-c_2 n \left[\frac{k\epsilon}{K}\right]^2\right)\indic{\epsilon\le \frac{K}{k}} 
		+ n\left(n\left[\frac{k\epsilon}{K}\right]\right)^{-(\beta-\eta)}\indic{\epsilon> \frac{K}{k}}\right],
	\end{align*}
	where the constants $c_1$ and $c_2$ are as in Lemma \ref{lemma:wasserstein-dist-bound-heavy} and $K,k>0$ are as in Lemma \ref{lemma:ubsr-t1}.
\end{corollary}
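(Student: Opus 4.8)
The plan is to obtain this corollary as a direct specialization of Theorem \ref{thm:t1-conc-bd-c2}, exactly parallel to the CVaR and SRM corollaries that precede it. First I would recall from Lemma \ref{ubsrt1lem} that, when the utility function $l$ is $K$-Lipschitz, the UBSR $S_{\alpha}$ is a \tone\ risk measure satisfying the \holder-continuity bound \eqref{tonedef} with constants $L=K$ and $\kappa=1$; indeed, Lemma \ref{ubsrt1lem} gives $|S_{\alpha}(X)-S_{\alpha}(Y)|\le K\,W_{1}(F_{X},F_{Y})$, which is precisely \eqref{tonedef} with these parameters.

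Next I would identify the estimator $\xi_{n,\alpha}$ defined through \eqref{eq:sr-est} with the generic \tone\ empirical estimate $\rho_{n}=\rho(F_{n})$ from \eqref{eq:rho-est}. As noted in the discussion following \eqref{eq:sr-est}, the constrained program \eqref{eq:sr-est} is exactly the sample-average approximation that computes the UBSR of a random variable distributed according to the EDF $F_{n}$, so that $\xi_{n,\alpha}=S_{\alpha}(F_{n})$. Hence the estimation error $|\xi_{n,\alpha}-S_{\alpha}(X)|$ is an instance of $|\rho_{n}-\rho(X)|$ to which Theorem \ref{thm:t1-conc-bd-c2} applies.

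Finally, I would invoke Theorem \ref{thm:t1-conc-bd-c2} with $L=K$ and $\kappa=1$ and simplify. Substituting $\kappa=1$ collapses the exponents $2/\kappa$ and $1/\kappa$ to $2$ and $1$, turning $\left(\epsilon/L\right)^{2/\kappa}$ into $\left(\epsilon/K\right)^{2}$ and $\left(\epsilon/L\right)^{1/\kappa}$ into $\epsilon/K$, while the thresholds $\indic{\epsilon\le L}$ and $\indic{\epsilon> L}$ become $\indic{\epsilon\le K}$ and $\indic{\epsilon> K}$. This reproduces the claimed bound verbatim, with the constants $c_{1},c_{2}$ inherited from the underlying Wasserstein concentration estimate Lemma \ref{lemma:wasserstein-dist-bound-c2} through Theorem \ref{thm:t1-conc-bd-c2}.

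There is essentially no obstacle here: the corollary is a mechanical specialization, since all the substantive work has already been done in establishing that UBSR is \tone\ (Lemma \ref{ubsrt1lem}) and in proving the heavy-tailed Type-1 bound (Theorem \ref{thm:t1-conc-bd-c2}). The only point warranting a moment's care is the identification $\xi_{n,\alpha}=S_{\alpha}(F_{n})$, i.e., that the sample program \eqref{eq:sr-est} genuinely returns the UBSR of the empirical law rather than a surrogate; this is immediate from the equivalent form \eqref{eq:ubsr-equiv-form} upon replacing the expectation $\E$ by the empirical average taken against $F_{n}$.
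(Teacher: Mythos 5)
Your proposal is correct and follows exactly the route the paper intends: the paper omits the proof, stating only that the corollary "follows in a straightforward fashion" from Theorem \ref{thm:t1-conc-bd-c2}, and your specialization with $L=K$, $\kappa=1$ via Lemma \ref{ubsrt1lem} together with the identification $\xi_{n,\alpha}=S_{\alpha}(F_{n})$ is precisely that argument.
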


For small deviations, i.e., $\epsilon \le 1$, the bounds presented above are satisfactory, as the tail decay matches that of a Gaussian r.v. with constant variance. On the other hand, for large $\epsilon$, the second term exhibits polynomial decay. The latter polynomial term is not an artifact of our analysis. Instead, it relates to the rate obtained in  Lemma \ref{lemma:wasserstein-dist-bound-heavy}. It would be an interesting research direction to investigate if the rate improves in the large $\epsilon$ case by employing the truncated estimator \eqref{eq:rho-est-trunc}.   

We have not presented bounds for empirical risk estimates of  type \ref{type:two}\  risk measures, when the underlying distribution satisfies \ref{ass:heavy}. It would be an interesting direction of future research to fill this gap.
%%%%%%%%%%%%%%%%%%%%%%%%%%%%%%%%%%%%%%%%%%%%%%%%%%%%%%%%%%%%%%%%%%%%% 
%%%%%%%%%%%%%%%%%%%%%%%%%%%%%%%%%%%%%%%%%%%%%%%%%%%%%%%%%%%%%%%%%%%%% 

	%%%%%%%%%%%%%%%%%%%%%%%%%%%%%%%%%%%%%%%%%%%%%%%%%%%%%%%%%%%%%%%%%%%%% 
	%%%%%%%%%%%%%%%%%%%%%%%%%%%%%%%%%%%%%%%%%%%%%%%%%%%%%%%%%%%%%%%%%%%%% 
	%%%%%%%%%%%%%%%%%%%%%%%%%%%%%%%%%%%%%%%%%%%%%%%%%%%%%%%%%%%%%%%%%%%%% 
	\section{Application: Risk-sensitive bandits}
	\label{sec:bandits}
	The concentration bounds for \ref{type:one}\ and \ref{type:two}\ risk measures in previous sections open avenues for bandit applications. We illustrate this claim by using the regret minimization framework in a stochastic $K$-armed bandit problem, with an objective based on an abstract risk measure. Our algorithm can be used as a template for risk-sensitive bandits, where the notion of risk could be any of the five risk measures discussed earlier namely, OCE (including CVaR as a special case), spectral risk measure, UBSR, CPT (including DRM as a special case) and RDEU.  

\subsection{Risk-sensitive bandit problem}
We are given $K$ arms with unknown distributions $P_i, i=1,\ldots,K$.
The interaction of the bandit algorithm with the environment proceeds, over $n$ rounds, as follows: (i) At round $t$, select an arm $I_t \in \{ 1,\ldots,K\}$;
(ii) Observe a sample cost from the distribution $P_{I_t}$ corresponding  to the arm $I_t$.

Let $\rho$ be a risk measure, and let $\rho(i)=\rho(P_{i})$ denote the risk associated with arm $i$, for $i=1,\ldots,K$. 
Let $\rho_{*} = \min_{i=1,\ldots,K} \rho(i)$ denote the lowest risk among the $K$ distributions, and $\Delta_i = (\rho(i)  - \rho_*)$ denote the gap in risk values of arm $i$ and that of the best arm.

The classic objective in a bandit problem is to find the arm with the lowest expected value. We consider an alternative formulation, where the goal is to find the arm with the lowest risk. Using the notion of regret, this objective is formalized as follows:
\[R_n = \sum_{i=1}^K \rho(i) T_i(n)  - n \rho_* = \sum_{i=1}^K T_i(n) \Delta_i,\]
where $T_i(n)= \sum_{t=1}^n \indic{I_t=i}$ is the number of pulls of arm $i$ up to time instant $n$. 
The regret definition above is in the spirit of those for CVaR and CPT-sensitive bandits in \cite{galichet2015thesis} and \cite{aditya2016weighted}, respectively. 

\subsection{\ref{type:one}\ risk measures with sub-Gaussian arms}
\label{sec:t1-bandits-subgauss}
We present a straightforward adaptation Risk-LCB of the well-known UCB algorithm \citep{auer2002finite} to handle an objective based on the abstract risk measure $\rho$. The algorithm caters to \ref{type:one}\ risk measures and arms' distributions that are sub-Gaussian with common parameter $\sigma$. 
The relevant concentration bound for the former case is in Theorem \ref{thm:t1-conc-bd-subgauss-const}, which we recall below.
\begin{equation}
		\Prob{|\rho_{m}-\rho(X)|>\epsilon}\leq \exp\left(- \frac{m}{256\sigma^2\myexp} \left(\left(\frac{\epsilon}{L}\right)^{\frac{1}{\kappa}}-\frac{512\sigma}{\sqrt{m}}\right)^2\right),\label{eq:t1-tailbd}
	\end{equation}
where $\rho_m$, which is formed using \eqref{eq:rho-est}, is an $m$-sample estimate of the risk measure $\rho(X)$, and $L$ and $\kappa$ come from the defining inequality (\ref{tonedef}) of a \ref{type:one} risk measure.
The tail bound above holds for $\epsilon$ satisfying the constraint given by 
 \begin{align}  
 \frac{512\sigma}{\sqrt{m}}<\left(\frac{\epsilon}{L}\right)^{\frac{1}{\kappa}} < \frac{512\sigma}{\sqrt{m}}+16\sigma\sqrt{\myexp}.
 \label{eq:eps-constraint}
 \end{align}
 Simple algebraic manipulations show that, for  every $\delta\in(\exp(-m),1)$, $\epsilon$ defined by 
 \[\epsilon= L \left[ \left(\frac{256\sigma^2\myexp \log(\frac{1}{\delta})}{m}\right)^{\frac{1}{2}}  + \frac{512\sigma}{\sqrt{m}}\right]^{\kappa}\] satisfies the constraint in \eqref{eq:eps-constraint}. 
% This constraint, when we translate to a high confidence form would be equivalent to 
% \begin{align}
% \delta \in [\exp(-n),1].\label{eq:deltaconstraint}
% \end{align}
This allows us to rewrite the tail bound for a \ref{type:one} risk measure from Theorem \ref{thm:t1-conc-bd-subgauss-const} in the high confidence form as 
\begin{align}		
\Prob{|\rho_{m}-\rho(X)|\le L \left[ \left(\frac{256\sigma^2\myexp \log(\frac{1}{\delta})}{m}\right)^{\frac{1}{2}}  + \frac{512\sigma}{\sqrt{m}}\right]^{\kappa} }
\geq 1-\delta, \mbox{ for every } \delta\in(\myexp^{-m},1).~ \label{eq:t1-hpb-form}
\end{align}

For the classic bandit setup with a expected value objective, the finite sample analysis provided by \cite{auer2002finite} chose to set $\delta = \frac1{t^4}$ for the $t$th round in a tail bound based on Hoeffding's inequality. This choice of $\delta$ was shown to be good enough to guarantee a sub-linear regret in \citep{auer2002finite}. However, unlike \eqref{eq:eps-constraint}, Hoeffding's inequality, which forms the basis for the UCB definition in a risk-neutral bandit setting, does not have a constraint on $\epsilon$. 
In our setting, for an arm $i$ that is pulled $T_i(t-1)$ times up to round $t$, we require $ \delta\in(\myexp^{-T_i(t-1)},1)$ for using the tail bound \eqref{eq:t1-tailbd}. Setting $\delta=\frac{8}{t^4}$ and using the fact that $T_i(t-1)\le t$, it is easy to see that this choice of $\delta$ satisfies the necessary constraint coming from \eqref{eq:t1-hpb-form}. Under this choice of $\delta$ for the risk-sensitive bandit problem that we consider, in any round $t$ of Risk-LCB, we have the following high-confidence guarantee for any arm $k \in \{1,\ldots,K\}$:
\begin{align}  
&	 \Prob{ \rho(i) \in [ \rho_{i,T_{i}(t-1)} - w_{i,T_{i}(t-1)}, \rho_{i,T_{i}(t-1)} + w_{i,T_{i}(t-1)}] } \ge 1-\frac{8}{t^4}, \textrm{ where} \nonumber\\
	& 		w_{i,T_{i}(t-1)} = L\sigma \left[ \frac{ 32\sqrt{\myexp\log(t)} + 512}{\sqrt{T_{i}(t-1)}}\right]^{\kappa} .\label{eq:lcb-w}
\end{align}	 		
where,
$\rho_{i,T_{i}(t-1)}$ is the estimate of the risk measure for arm $i$ computed using \eqref{eq:rho-est} from $T_i(t-1)$ samples, and $w_{i,T_{i}(t-1)}$ is the confidence width. In arriving at the form for the confidence width, we have ignored a subtractive factor involving $\log 8$ that arises from taking the logarithm of  $\delta=\frac{8}{t^4}$. Ignoring such a factor does not affect the high-confidence guarantee since we have increased the  size of the confidence interval through this simplification. 

\begin{center}
	\fbox{
		\begin{minipage}{0.95\textwidth}
			\textbf{Risk-LCB algorithm}\\[0.8ex]
			Initialization:  Play each arm once. \\
			{\bfseries For  $t=K+1,\dots,n$, repeat}
			\begin{enumerate}
				\item For each arm $i=1,\ldots,K$, define 
				\[\lcb_t(i) = \rho_{i,T_{i}(t-1)} - w_{i,T_{i}(t-1)},\]   
				where $\rho_{i,T_{i}(t-1)}$ is the estimate of the risk measure for arm $i$ computed using \eqref{eq:rho-est} from $T_i(t-1)$ samples, and $w_{i,T_{i}(t-1)}$ is defined in \eqref{eq:lcb-w}.
				\item Play  arm $I_t = \argmin\limits_{i=1,\ldots,K} \lcb_t(i)$.
				\item Observe sample $X_t$ from the distribution $P_{I_t}$ corresponding  to the arm $I_t$.
			\end{enumerate}
		\end{minipage}
	}\\[0.5ex]
\end{center}
The result below bounds the regret of Risk-LCB algorithm, and the proof is a straightforward adaptation of that used to establish the regret bound of the regular UCB algorithm in \cite{auer2002finite}. 
\begin{theorem}
	\label{thm:regret-subgauss}
	Consider a $K$-armed stochastic bandit problem with a risk measure $\rho$ that is  \ref{type:one} with parameters $L$ and $\kappa$. Assume that the arms' distributions are sub-Gaussian with a common parameter $\sigma$.
	Then the expected regret of Risk-LCB at the end of $n\geq 1$ rounds satisfies
	\[ \E (R_n) \le \sum\limits_{\{i:\Delta_i>0\}}\dfrac{\sigma^2(32\sqrt{\myexp\log n}+512)^2 \ (2L)^{\frac{2}{\kappa}}}{\Delta_i^{\frac{2}{\kappa}-1}}  + K\left(1 + \dfrac{8\pi^2}{3} \right)\Delta_i.\]
	Further, $R_n$ also satisfies the following bound that does not scale inversely with the gaps:
	\[ \E (R_n)  \le \left(K\sigma^2 (32\sqrt{\myexp\log n}+512)^2 (2L)^{\frac{2}{\kappa}}
		+ K \Delta_i^{2/\kappa} \left(1+\dfrac{8\pi^2}{3}  \right) \right)^{\frac{\kappa}{2}} n^{\frac{2-\kappa}{2}}.\]
\end{theorem} 
\begin{proof}
	See Section \ref{sec:appendix-lcb-proof}.
\end{proof}

For the case of OCE, SRM and UBSR, we have $\kappa=1$. Thus, the regret bound obtained above matches, up to log factors, that of a classic $K$-armed bandit problem w.r.t. the dependence on the underlying gaps, and the horizon $n$. 

A UCB-type algorithm for optimization of risk measures has been proposed earlier in \cite{cassel2018general}. 
In comparison to the bound in the result above, the regret bound in \cite{cassel2018general} exhibits a sub-optimal dependence on the underlying gaps. In particular, the bound scales inversely with $\min(1,\Delta)$, where $\Delta$ is the smallest gap. In contrast, the bound we derive for a UCB-type algorithm scales inversely with the smallest gap. Our bound is thus  better for problems with gaps bounded below by one.

CVaR optimization has been considered in a bandit setting in the literature. For instance, in \cite{galichet2015thesis}, the authors assume that the underlying arms' distributions have bounded support, and propose a UCB-type algorithm. We relax this assumption, and consider the case of sub-Gaussian distributions for the $K$ arms. 
The tail bounds in \cite{Kolla} and \cite{prashanth2019concentration} do not allow a bandit application, because forming the confidence term (required for UCB-type algorithms) using their bound would require knowledge of the density in a neighborhood of the true VaR. In contrast, the constants in our bounds depend only on the sub-Gaussian parameter $\sigma$, and several classic MAB algorithms (including UCB) assume this information.

\subsection{CPT risk measure with sub-Gaussian arms}
We now consider the case of CPT, which is a prominent \ref{type:two} risk measure, in conjunction with sub-Gaussian arms. 
CPT-based bandits have been considered in \cite{aditya2016weighted} for the case of arms' distributions with bounded support, and an UCB-based algorithm has been proposed therein. For handling the case of CPT-value in a bandit context, we propose a straightforward variant of the Risk-LCB algorithm presented earlier. Our algorithm can be seen as a generalization of the scheme in \cite{aditya2016weighted}, as it can handle arms' distributions that are sub-Gaussian. 

 The overall algorithm follows the template in Risk-LCB with  the only modifications being  to the confidence widths used in defining the LCBs for each arm. 
For defining the confidence widths, we start with the concentration bound in Proposition \ref{cor:cpt-subgauss-const} after recalling that the proposition applies to sub-Gaussian r.v.s with $\beta=2$. To write this bound in the high confidence form, note that
for every $m\geq 1$ and  $\delta\in(\exp(-m),1)$, $\epsilon$ defined by 
\[\epsilon=\left[L(K^++K^-)\tau_{m}^{1-\alpha}\right] \left[ \left(\frac{ 256\sigma^2\myexp \log(\frac{1}{\delta})}{m}\right)^{\frac{1}{2}}  + \frac{512\sigma}{\sqrt{m}}\right]^{{\alpha}}+ \frac{(K^{+}+K^{-})L}{\alpha(1-\alpha) n^{\alpha(1-\alpha)}\sqrt{\log m} }\]
satisfies the constraint $\frac{512\sigma}{\sqrt{m}}<\left(\frac{\epsilon - c_3(m)}{c_4(m)}\right)^{\frac{1}{\alpha}}< \frac{512\sigma}{\sqrt{m}}+16\sigma\sqrt{\myexp}$ appearing in Proposition \ref{cor:cpt-subgauss-const}, where $\tau_{m}, L, K^+,K^-,\alpha,c_{3}(m)$ and $c_{4}(m)$ are as defined in that proposition and $\sigma$ is the sub-Gaussianity parameter. This observation along with the fact that $\frac{1}{\sqrt{\log m}} < 2$  for all $m\ge 2$ allows us to rewrite the bound in Proposition \ref{cor:cpt-subgauss-const} in the high confidence form as follows: for every $\delta\in(\myexp^{-m},1)$
\begin{align}	
&	\mathbb{P}\left(|C_{m}-C(X)|\le \left[L(K^++K^-)\tau_{m}^{1-\alpha}\right] \times\right.\\
&\qquad\qquad\left.\left[ \left(\frac{ 256\sigma^2\myexp \log(\frac{1}{\delta})}{m}\right)^{\frac{1}{2}}  + \frac{512\sigma}{\sqrt{m}}\right]^{{\alpha}}+ \frac{2(K^{+}+K^{-})L}{\alpha(1-\alpha) n^{\alpha(1-\alpha)} }\right) \geq 1-\delta,\label{eq:cpt-highconf}
\end{align}
where $C_n$ is the $m$-sample estimate of the CPT-value $C(X)$ formed using \eqref{eq:cpt-est}. 

Let $X_1,\ldots, X_K$ denote the r.v.s corresponding to arms $1,\ldots,K$, respectively. Assume that $X_i, i=1,\ldots,K$ are sub-Gaussian with parameter $\sigma$.
Then, as in Section \ref{sec:t1-bandits-subgauss}, the choice $\delta=\frac{8}{t^4}$ is contained in $(\exp(-t),1)$, and for this choice, we have the following high-confidence guarantee for any arm $k \in \{1,\ldots,K\}$ at any round $t$ of CPT-LCB:
\begin{align}  
	&	 \Prob{ C(X_i) \in [ C_{i,T_{i}(t-1)} - w_{i,T_{i}(t-1)}, C_{i,T_{i}(t-1)} + w_{i,T_{i}(t-1)}] } \ge 1-\frac{8}{t^4}, \textrm{ where} \nonumber\\
	& 		w_{i,T_{i}(t-1)} =L(K^++K^-)\left[\max\left\{\frac{K^{+}}{k^{+}},\frac{K^{-}}{k^{-}}\right\}\left(\sqrt{\log T_i(t-1)}+1\right)\right]^{1-\alpha}\\
	&\qquad\times\left[ \frac{\sigma(32\sqrt{\myexp\log t} + 512)}{\sqrt{T_i(t-1)}}\right]^{{\alpha}}+ \frac{2(K^{+}+K^{-})L}{\alpha(1-\alpha) T_i(t-1)^{\alpha(1-\alpha)}},~\label{eq:lcb-cpt}
\end{align}	 		
where 
$C_{i,T_{i}(t-1)}$ is the estimate of the CPT-value for arm $i$ computed using \eqref{eq:cpt-est} from $T_i(t-1)$ samples. This high-confidence guarantee can be used to prove the following theorem. 

\begin{theorem}
	\label{thm:cptregret-subgauss}
	Consider a $K$-armed stochastic bandit problem with CPT as the risk measure. Assume that the arms' distributions are sub-Gaussian with common parameter $\sigma$.
	Then the expected regret of Risk-LCB at the end of $n\geq 1$ rounds with the  LCB given by \eqref{eq:lcb-cpt} satisfies 
	\begin{align} 
	\E (R_n) &\le
	\sum\limits_{\{i:\Delta_i>0\}} \dfrac{\bigg[ \tilde c_4 [ (32\sqrt{\myexp\log n}+512) \sigma]^{\alpha} + \tilde c_3\bigg]^{\frac{1}{\alpha \min\left\{\frac{1}{2},1-\alpha\right\}}}}{\Delta_i^{\frac{1}{\alpha \min\left\{\frac{1}{2},1-\alpha\right\}}-1}} + K\left(1 + \dfrac{8\pi^2}{3} \right)\Delta_i,\label{eq:cptregretbound}\\ 
	&\textrm{ and }\\
 \E (R_n)  &\le \left(K \bigg[ \tilde c_4 [ (32\sqrt{\myexp\log n}+512) \sigma]^{\alpha} + \tilde c_3\bigg]^{\frac{1}{\alpha \min\left\{\frac{1}{2},1-\alpha\right\}}}\right.\\
 & \left.+ K \Delta_i^{\frac{1}{\alpha \min\left\{\frac{1}{2},1-\alpha\right\}}} \left(1 + \dfrac{8\pi^2}{3} \right) \right)^{\alpha \min\left\{\frac{1}{2},1-\alpha\right\}} n^{1 - \alpha \min\left\{\frac{1}{2},1-\alpha\right\}},\label{eq:cptregretbound2}
	\end{align}
where $\tilde c_3 = \frac{2L(K^++ K^-)}{\alpha(1-\alpha)  }$ and $\tilde c_4=L(K^++K^-)\left(\max\left\{\frac{K^{+}}{k^{+}},\frac{K^{-}}{k^{-}}\right\}(\sqrt{\log n}+1)\right)^{1-\alpha}$.	
\end{theorem} 
\begin{proof}
    See Section \ref{sec:appendix-cpt-regret-proof}.
\end{proof}

Ignoring log factors, the regret bound above is $\tilde O\left(\sum\limits_i\frac{1}{\Delta_i^{\frac{1}{\alpha\min\left\{\frac{1}{2},1-\alpha\right\}}-1}}\right)$. The parameter $\alpha$ is the \holder exponent of the weight function, and is less than $1$ for the weight function shown in Figure \ref{fig:w}. Thus, the regret bound for CPT is weaker than the one for classical UCB that finds the arm with the best mean. 

In \cite{aditya2016weighted}, the authors provide a regret upper bound for a UCB-type algorithm with CPT-value as the risk measure under the assumption that the underlying arms' distributions have bounded support.
In particular, the authors in \cite{aditya2016weighted} establish a regret upper bound of the order $\tilde O\left(\sum_i\frac{1}{\Delta_i^{\frac{2}{\alpha}-1}}\right)$, and also show that this upper bound cannot be improved as far as the dependence on gaps and the horizon are concerned  through a minimax regret lower bound. We relax this assumption to consider the more general class of sub-Gaussian arms' distributions. For $\alpha < \frac{1}{2}$, our regret bound in Theorem \ref{thm:cptregret-subgauss} matches the bound in \cite{aditya2016weighted}, while for $\alpha > \frac{1}{2}$, our bound is weaker. It would be interesting future research to check if one can obtain an improved regret bound with sub-Gaussian arms for the latter case, or if the lower bound in \cite{aditya2016weighted} is sub-optimal in this case.

	%%%%%%%%%%%%%%%%%%%%%%%%%%%%%%%%%%%%%%%%%%%%%%%%%%%%%%%%%%%%%%%%%%%%% 
	%%%%%%%%%%%%%%%%%%%%%%%%%%%%%%%%%%%%%%%%%%%%%%%%%%%%%%%%%%%%%%%%%%%%% 
	%%%%%%%%%%%%%%%%%%%%%%%%%%%%%%%%%%%%%%%%%%%%%%%%%%%%%%%%%%%%%%%%%%%%% 
	\section{Proofs}
	\label{sec:proofs}
	%%%%%%%%%%%%%%%%%%%%%%%%%%%%%%%%%%%%%%%%%%%%%%%%%%%%%%%%%%%%%%%%%%%%%%%%%%%%%%%%%%%%%%%%%%%%%%%%%%%%%%%%%%%%%%%%%%%%%%%%%%%%%%%%%%
\subsection{Proofs of the claims in Section \ref{sec:prelims}}
\label{sec:prelims-proofs}

\subsubsection{Proof of Lemma \ref{lemma:lipschitz-wasserstein}}
\label{sec:wass-equiv-proof}
\begin{proof}
	The first equality in \eqref{lipwass2} is given by the Kantorovich-Rubinstein theorem  \citep{givensshortt,edwards}. The second equality is given in \cite{vallander}. 
	
	To prove the third inequality in \eqref{lipwass2}, we note that the integral on the left hand side 
	of the third inequality is unchanged if we replace $F_{1}$ and $F_{2}$ by the point-wise maximum and minimum, respectively, of $F_{1}$ and $F_{2}$. Hence, without loss of generality, we may assume that $F_{1}(s)\geq F_{2}(s)$ for all $s\in\R$. The integral in question then reduces to 
	\begin{equation}
		\int_{-\infty}^{\infty}|F_{1}(s)-F_{2}(s)|\mathrm{d}s=\int_{-\infty}^{\infty}(F_{1}(s)-F_{2}(s))\mathrm{d}s=\int_{-\infty}^{\infty}\int_{F_{2}(s)}^{F_{1}(s)}\mathrm{d}\beta\mathrm{d}s.\label{wass1}
	\end{equation}
	It can easily be shown from the definition of the generalized inverse that 
	\begin{align*}\{(\beta,s)\in\R^{2}:F_{2}(s)<\beta<F_{1}(s)\}&\subseteq &\{(\beta,s)\in\R^{2}:F_{1}^{-1}(\beta)\leq s\leq F_{2}^{-1}(\beta)\} \\ &\subseteq & \{(\beta,s)\in\R^{2}:F_{2}(s)\leq \beta \leq F_{1}(s)\}.
	\end{align*}
	This justifies interchanging the order of integration (see Theorem 14.14 of \cite{apostol}) in \eqref{wass1}, which yields 
	\begin{equation}
		\int_{-\infty}^{\infty}|F_{1}(s)-F_{2}(s)|\mathrm{d}s=\int_{0}^{1}\int_{F_{1}^{-1}(\beta)}^{F_{2}^{-1}(\beta)}\mathrm{d}s\mathrm{d}\beta=\int_{0}^{1}[F_{2}^{-1}(\beta)-F_{1}^{-1}(\beta)]\mathrm{d}\beta.
	\end{equation}
	The third inequality in \eqref{lipwass2} now follows by noting that, under our assumption that $F_{1}(s)\geq F_{2}(s)$ for all $s\in\R$, we have $F_{2}^{-1}(\beta)\geq F_{1}^{-1}(\beta)$ for all $\beta\in[0,1]$. 
\end{proof}

\subsubsection{Proof of Lemma \ref{lemma:wasserstein-dist-bound}}
\label{sec:wasserstein-dist-bound-c1-proof}
\begin{proof}
	The lemma follows directly by applying case (1) of Theorem 2 in \citep{fournier2015rate} to the r.v. $X$.
\end{proof}

\subsubsection{Proof of Lemma \ref{lemma:wasserstein-dist-bound-subgauss}}
\label{sec:wasserstein-dist-bound-subgauss}
We first state and prove a variation of McDiarmid's inequality, which will be used subsequently to prove Lemma \ref{lemma:wasserstein-dist-bound-subgauss}.
Let $\X=(X_1,\ldots,X_n)$ denote a vector of $n$ i.i.d. samples from a common distribution, say $F$. For each $i=1,\ldots, n$, let $\X'_{(i)}=(X_1,\ldots,X_{i-1},X'_i,X_{i+1},\ldots,X_n)$, where $X'_i$ denotes an independent copy of $X_{i}$. 
Let $f$ be a real-valued function on $\R^{n}$ satisfying $\E [|f(\X)|]<\infty$. For each $i=1,\ldots,n$,  define
\[D_i=f(\X)-f(\X'_{(i)}).\]
Finally, let $\F_{0}$ denote the trivial $\sigma$-field and,  for each $k=1,\ldots,n$, let $\F_k$ denote the $\sigma$-field generated by the samples  $\{X_i,i\le k\}$

\begin{lemma}\label{lemma:mcdiarmid} Let $n\geq 1$, $f$ and $D_{i}$, $i=1,\ldots,n$, be as above. 
     Suppose there exists $\tilde{\sigma} >0$ such that, for each $i=1,\ldots,n$,  $D_i$  satisfies 
    \begin{align}
            \E \left(|D_i|^k|\F_{i-1}\right) \le  4^{k} \tilde{\sigma}^k k^{k/2}, \forall k\ge 1.
        \label{eq:Di-subgauss}
    \end{align}
    Then, for every $\tilde{\epsilon} \in(0,16n\tilde{\sigma}\sqrt{\myexp})$, we have 
    \begin{align}
	\Prob{f(\X) - \E[ f(\X)] > \tilde{\epsilon}}   &\leq \exp\left(-\frac{\tilde{\epsilon}^2}{256 n \tilde{\sigma}^2 \myexp}\right), \label{eq:hpb1}
\end{align}
where $\myexp$ is  Euler's number.
\end{lemma}
\begin{proof}
To begin, choose $\tilde{\epsilon} \in(0,16n\tilde{\sigma}\sqrt{\myexp})$ and $i\in\{1,\ldots,n\}$. Since $X_{i}$ and $X_{i}^{\prime}$ have the same conditional distribution given $\F_{i-1}$, it  is easy to see that $\E(D_{i}|\F_{i-1})=0.$
%=\E_{X_{i+1},\ldots,X_{n}}[\E_{X_{i},X_{i}^{\prime}}(f(\X)-f(\X^{\prime}_{(i)}))]=0$.  
Let  $c=\tilde{\epsilon}/(128n\tilde{\sigma}^{2}\myexp)$, and note that $32c^{2}\tilde{\sigma}^{2}\myexp<1/2$. Using $\myexp^x \le x + \myexp^{x^2}$ and $\E(D_{i}|\F_{i-1})=0$, we obtain
\begin{align}
    \E [\exp(c D_i)|\F_{i-1}] &\le \E\left( c D_i + \exp(c^2 D_i^2)|\F_{i-1} \right)\nonumber \\
    & = \E\left( \exp(c^2 D_i^2)|\F_{i-1} \right) = 1 + \sum_{k \ge 1} \frac{c^{2k} \E (D_i^{2k}|\F_{i-1})}{k!} \\
    &\le 1 + \sum_{k \ge 1} \frac{c^{2k} 4^{2k} \tilde{\sigma}^{2k} (2k)^{k}}{k!}  \\
    &\le 1 + \sum_{k \ge 1} \frac{ (4 c \tilde{\sigma})^{2k} (2k)^{k} }{(k/\myexp)^k} \quad (\textrm{ Using Stirling's approximation } k! \ge (k/\myexp)^k) \\
    & =  \sum_{k\ge 0} (32 c^2 \tilde{\sigma}^2 \myexp)^k = \frac{1}{1-32 c^2 \tilde{\sigma}^2 \myexp} \quad (\textrm{ since } 32 c^2 \tilde{\sigma}^2 \myexp < 1)  \\
    & \le \exp\left(64 c^2 \tilde{\sigma}^2 \myexp\right) \quad (\textrm{ since  } 32 c^2 \tilde{\sigma}^2 \myexp < 1/2), \label{lastineq2}
\end{align}
where we have used the inequality $1/(1-x) \le \myexp^{2x}$ for $0<x<1/2$.

Next, define 
$\Delta_i = \E [ f(\X) \left| \F_k \right.] - \E [ f(\X) \left| \F_{k-1} \right.]$ for each $i=1,\ldots,n$. Fix $i\in\{1,\ldots,n\}$. Since $X_{i}$ and $X_{i}^{\prime}$ are identically distributed and all the samples are independent, we have $\E(f(\X)|\F_{i-1})=\E(f(\X^{\prime}_{(i)})|\F_{i-1})=\E(f(\X^{\prime}_{(i)})|\F_{i})$. As a result, we can write  $\Delta_{i}=\E(D_{i}|\F_{i}).$ Applying Jensen's inequality for conditional expectations now yields 
\[ \E (\exp(c\Delta_i) | \F_{i-1}) =\E[\exp(c\E(D_{i}|\F_{i}))|\F_{i-1}]\le \E[\E(\exp(cD_{i})|\F_{i})|\F_{i-1}]=\E (\exp(c D_i) | \F_{i-1}).\] Combining the above inequality with \eqref{lastineq2} gives
\begin{align}
\E (\exp(c\Delta_i) | \F_{i-1}) &\leq \exp\left(64 c^2 \tilde{\sigma}^2 \myexp\right)
\label{lastineq4}
\end{align}

On noting  that $f(\X) - \E (f(\X)) 	= \sum\limits_{i=1}^{n} \Delta_i$
and using \eqref{lastineq4},
we have
\begin{align}
	\Prob{f(\X) - \E (f(\X)) > \tilde{\epsilon}}  &= \Prob{\sum\limits_{k=1}^n \Delta_k > \tilde{\epsilon}}\\
	&\leq \exp(-c \tilde{\epsilon})\E\left[\exp\left(c\sum\limits_{k=1}^n \Delta_k \right)\right]\\
	&\leq \exp(-c \tilde{\epsilon})\E\left[\exp\left(c\sum\limits_{k=1}^{n-1} \Delta_k \right)\E\left[\exp(c \Delta_n )\middle|\mathcal{F}_{n-1}\right]\right]\\
	& \le \exp(-c \tilde{\epsilon})\E\left[\exp\left(c\sum\limits_{k=1}^{n-1} \Delta_k \right)\exp\left(64 c^2 \tilde{\sigma}^2 \myexp\right)\right]\\
	& \vdots \\
	& \le  \exp(-c \tilde{\epsilon})\exp\left(64n c^2 \tilde{\sigma}^2 \myexp\right)\\
	& = \exp\left(-\frac{\tilde{\epsilon}^2}{256 n \tilde\sigma^2 \myexp}\right),
\end{align}
where the suppressed steps involve successively conditioning over $\mathcal{F}_{n-2}, \mathcal{F}_{n-3}, \ldots,\mathcal{F}_{0}$ and using \eqref{lastineq2} at each step. The final equality comes from substituting the value of $c$. 
\end{proof}
Before proving Lemma \ref{lemma:wasserstein-dist-bound-subgauss}, we provide a standard result on sub-Gaussian r.v.s that establishes bounds on its moments. For the sake of completeness, we prove this result so that the constants in the bound can be inferred easily. 
\begin{lemma}
	\label{lemma:subgaussmoments}
	Suppose a r.v. $X$ is sub-Gaussian with parameter $\sigma$. Then $X$ satisfies 
	\[ \left(\E\left|X\right|^k\right)^{\frac{1}{k}} \le 2 \sigma \sqrt{k}, \ \forall k\ge 1.\]
\end{lemma}
\begin{proof}
	Notice that
	\begin{align*}
		\E\left(\left|X\right|^k\right) &= \int_{0}^{\infty} \Prob{\left|X\right|^k\ge u} du \\
		& = \int_{0}^{\infty} \Prob{\left|X\right|\ge \epsilon} k \epsilon^{k-1} d\epsilon\\
		& \le \int_{0}^{\infty} 2 \exp\left(-\frac{\epsilon^2}{2\sigma^2}\right) k \epsilon^{k-1} d\epsilon\\
		& \le 2^{\frac{k}{2}} \sigma^k k \int_{0}^{\infty} \exp(-s) \  s^{\frac{k}{2}-1} ds\\
		& = 2^{\frac{k}{2}} \sigma^k k\Gamma\left(\frac{k}{2}\right) \le 2^{\frac{k}{2}} \sigma^k k \left(\frac{k}{2}\right)^{\frac{k}{2}} \quad \textrm{(Since } \Gamma(x) \le x^x)\\
		&\le \sigma^k k\left(k\right)^{\frac{k}{2}}.
	\end{align*}
	Hence,
	\[ \left(\E\left|X\right|^k\right)^{\frac{1}{k}} 
	\le \sigma \sqrt{k} \left(k\right)^{\frac{1}{k}}\le 2 \sigma \sqrt{k}.\]
	Hence proved.
	\end{proof}

\begin{proof}[Lemma \ref{lemma:wasserstein-dist-bound-subgauss}]
Choose $n$ and $\epsilon$ as in the lemma, let $f(\X)=W_1(F_n,F)$, and fix $i\in\{1,\ldots,n\}$. On using the triangle inequality for the Wasserstein distance, we obtain
\[ |f(\X) - f(\X'_{(i)})|\leq W_{1}(F_{n},F^{\prime}_{n}) \le \frac{1}{n} | X_i - X'_i|,  \]
where $F^{\prime}_{n}$ denotes the EDF obtained from the sample $\mathcal{X}^{\prime}_{(i)}$, and the last inequality follows from the definition of an EDF. 
Setting $D_i=f(\X) - f(\X'_{(i)})$, using the independence of the samples and  the inequalities $|x-y|^{k}\leq (|x|+|y|)^k\leq 2^{k-1}(|x|^{k}+|y|^{k})$ for $k\geq 1$ (see Fact 2.2.59 in \citet{dsb}), and then applying Lemma \ref{lemma:subgaussmoments} to the  sub-Gaussian r.v.s $X_i,X'_i$, we obtain
\begin{align} \E ( |D_i|^k|\F_{i-1})=\frac{1}{n^{k}}\E(|X_{i}-X_{i}^{\prime}|^{k}) \le \frac{2^k}{n^k} \E (|X_i|^k) \le 4^k  \left(\frac{ \sigma}{n}\right)^k k^{k/2} 
\label{intereqn}
\end{align}
for every $k\geq 1$, where we have also used $\E(|X_{i}|^{k})=\E(|X_{i}^{\prime}|^{k})$. 

The inequalities above show that the assumptions of Lemma \ref{lemma:mcdiarmid} hold with $\tilde{\sigma}=\sigma/n$. Letting $\tilde{\epsilon}=\epsilon-\frac{512\sigma}{\sqrt{n}}$, we see that $\tilde{\epsilon}\in(0,16n\tilde{\sigma}\sqrt{\myexp})$. 
Applying Lemma \ref{lemma:mcdiarmid} now yields
\begin{align}
	\Prob{f(\X) - \E (f(\X)) > \tilde{\epsilon}}  &\le \exp\left(-\frac{\tilde{\epsilon}^2}{256 n (\sigma/n)^2 \myexp}\right) = \exp\left(-\frac{n\tilde{\epsilon}^2}{256  \sigma^2 \myexp}\right).\label{eq:subgauss-centerederror}
\end{align}

To infer the final claim in the lemma statement, we need to bound $\E( f(\X))$. For this purpose, we first specify the bound from Theorem 3.1 of \cite{lei2020convergence} and later specialize to our setting.
	For a r.v. $X$ satisfying $\E\left(|X|^q\right) < \top^q < \infty$ and some $q>p\ge 1$, 
	\begin{align} 
	\E(W_p(F_n,F)) \le c_{p,q} \top n^{-\min\left\{\frac{1}{\max\{2p,1\}},\frac{1}{p}-\frac{1}{q}\right\}} (\log n)^\frac{\zeta}{p}, 
	\label{eq:leibd}
	\end{align}
	where  
	\[\zeta = \begin{cases} 
	2 & \textrm{if }1=q=2p\\
	1 & \textrm{if }``1\ne 2p \textrm{ and } q= \min(\frac{p}{1-p},2p)" \mbox{ or } ``q > 1=2p \textrm{''}\\
	0 & \textrm{else}.
	\end{cases}, \] and $c_{p,q}$ is a constant that depends on $p$ and $q$. 
	Tracing through the proof of the aforementioned theorem, we found that $c_{p,q} = (1+3^p)2^{2p-2}2^{q+1}$.

In our setting, $p=1$. Choosing $q=4$, we obtain $c_{p,q}=128$, $\top=4\sigma$ using Lemma \ref{lemma:subgaussmoments}. Further, $\zeta=0$ for this choice of $p,q$. Thus,	
applying the bound in \eqref{eq:leibd} leads to
\begin{align}
	\E( f(\X)) \le \frac{512\sigma}{\sqrt{n}}.
	\label{eq:wass-expec-bd-subgauss}
\end{align}
The main claim now follows by combining \eqref{eq:subgauss-centerederror} and \eqref{eq:wass-expec-bd-subgauss}, and substituting for $\tilde{\epsilon}$. 
\end{proof}
\subsubsection{Proof of Lemma \ref{lemma:wasserstein-dist-bound-subexp}}
\label{sec:wasserstein-dist-bound-subexp-proof}
For establishing the bound in Lemma \ref{lemma:wasserstein-dist-bound-subexp}, we need to invoke a Wasserstein concentration result for r.v.s satisfying the `Bernstein's condition'. We specify this condition and show that a sub-exponential r.v. satisfies the same.

A r.v. $X$ satisfies the Bernstein's condition if 
	there exist $\sigma, b >0$ such that
	\begin{align}
		\E\left(\left|X\right|^k\right) \le \frac1{2}\sigma^2 (k!) b^{k-2} \textrm{ for\ all\ } k\ge 2.
		\label{eq:bern-cond}
	\end{align}
Note that \eqref{eq:bern-cond} does not require the r.v. to necessarily have mean zero. 
The result below shows that a sub-exponential r.v. satisfies \eqref{eq:bern-cond}
\begin{lemma}
	\label{lemma:subexpequiv}
	Suppose a r.v. $X$  satisfies \ref{ass:subexp} with parameter $c>0$. Then $X$ satisfies the Bernstein's condition \eqref{eq:bern-cond} with parameters $\sigma=\frac{2}{c}$ and $b=\frac{1}{c}$. 
\end{lemma}
\begin{proof}
	Suppose $X$ is sub-exponential. Then, we have
	\begin{align*}
		\E\left(\left|X\right|^k\right) &= \int_{0}^{\infty} \Prob{\left|X\right|^k\ge u} du \\
		& = \int_{0}^{\infty} \Prob{\left|X\right|\ge \epsilon} k \epsilon^{k-1} d\epsilon\\
		& \le \int_{0}^{\infty} 2 \exp(-c\epsilon) k \epsilon^{k-1} d\epsilon\\
		& \le \int_{0}^{\infty} \frac{2k }{c^{k}}\exp(-s)  s^{k-1} ds\\
		& = \frac{2k }{c^{k}}\Gamma(k) = \frac{(2/c)^2}{2} k! \left(\frac{1}{c}\right)^{k-2}.
	\end{align*}
	Thus, $X$ satisfies the Bernstein's condition \eqref{eq:bern-cond}.
\end{proof}

Next, we state a variant of Lemma \ref{lemma:mcdiarmid} for the sub-exponential case, and subsequently prove Lemma \ref{lemma:wasserstein-dist-bound-subexp} by applying this result. The lemma appears as Theorem 5.1 in \cite{lei2020convergence}, but we provide a proof here for the sake of completeness. 

\begin{lemma}\label{lemma:mcdiarmid-subexp} Let $n\geq 1$, $f$ and $D_{i}$, $i=1,\ldots,n$, be as defined above Lemma \ref{lemma:mcdiarmid}. 
	Suppose there exist $\tilde{\sigma}, \tilde{b}>0$ such that, for each $i=1,\dots,n$,  $D_i$  satisfies 
	\begin{align}
	 \E\left(\left.|D_{i}|^k\right|\F_{i-1}\right) \le \frac1{2}\tilde\sigma^2 (k!) \tilde{b}^{k-2} \textrm{ for\ all\ } k\ge 2,
	 \label{bernsteinforD}
	\end{align}
where the $\sigma$-fields $\F_{0},\ldots,\F_{n-1}$ are as defined in subsection \ref{sec:wasserstein-dist-bound-subgauss}.
	Then, for every $\tilde\epsilon>0$,
	we have
	\begin{align}
		\Prob{f(\X) - \E[ f(\X)] > \tilde{\epsilon}}   &\leq \exp\left(-\frac{\tilde{\epsilon}^2}{2 n\tilde \sigma^2 + 2 \tilde\epsilon \tilde b}\right). \label{eq:hpb-subexp}
	\end{align}
\end{lemma}
\begin{proof}
To begin, choose $\tilde{\epsilon}>0$ and $i\in\{1,\ldots,n\}$. Since $X_{i}$ and $X_{i}^{\prime}$ have the same conditional distribution given $\F_{i-1}$, it  is easy to see that $\E(D_{i}|\F_{i-1})=0.$
Let  $c=\frac{\tilde{\epsilon}}{n\tilde{\sigma}^{2}+\tilde{b}\tilde{\epsilon}}$, and note that $\tilde{b}c<1$.  By expanding the exponential and using  $\E(D_{i}|\F_{i-1})=0$, we may write 
\begin{align}
    \E [\exp(c D_i)|\F_{i-1}] &= \E\left(1+ c D_i + \left.\sum_{k=2}^{\infty}\frac{c^{k}D_{i}^{k}}{k!}\right|\F_{i-1} \right)\nonumber \\
    & \le 1 + \sum_{k \ge 2} \frac{c^{k} \E (|D_i|^{k}|\F_{i-1})}{k!} 
    \le 1 +\frac{1}{2} \sum_{k \ge 2} c^{k}\tilde{\sigma}^{2}\tilde{b}^{k-2}  \quad (\textrm{ Using \eqref{bernsteinforD} }) \\
    &= 1 +\frac{1}{2} c^{2}\tilde{\sigma}^{2}\sum_{k \ge 0} (c\tilde{b})^{k} \\
    &\le \exp\left(\frac{1}{2} c^{2}\tilde{\sigma}^{2}\sum_{k \ge 0} (c\tilde{b})^{k} \right) \quad (\textrm{ Using the inequality $1+ax\leq \myexp^{ax}$ for $a,x>0$}) \\
    & =  \exp\left(\frac{1}{2}\frac{c^{2}\tilde{\sigma}^{2}}{(1-c\tilde{b})}\right) \quad (\textrm{ since } c\tilde{b}<1).  \label{lastineqb2}
\end{align}

Next, define 
$\Delta_i = \E [ f(\X) \left| \F_i \right.] - \E [ f(\X) \left| \F_{i-1} \right.]$ for each $i=1,\ldots,n$. Fix $i\in\{1,\ldots,n\}$. Since $X_{i}$ and $X_{i}^{\prime}$ are identically distributed and all the samples are independent, we have
\[\E(f(\X)|\F_{i-1})=\E(f(\X^{\prime}_{(i)})|\F_{i-1})=\E(f(\X^{\prime}_{(i)})|\F_{i}).\] As a result, we can write  $\Delta_{i}=\E(D_{i}|\F_{i}).$ Applying Jensen's inequality for conditional expectations now yields 
\[ \E (\exp(c\Delta_i) | \F_{i-1}) =\E[\exp(c\E(D_{i}|\F_{i}))|\F_{i-1}]\le \E[\E(\exp(cD_{i})|\F_{i})|\F_{i-1}]=\E (\exp(c D_i) | \F_{i-1}).\] Combining the above inequality with \eqref{lastineqb2} gives
\begin{align}
\E (\exp(c\Delta_i) | \F_{i-1}) &\leq \exp\left(\frac{1}{2}\frac{c^{2}\tilde{\sigma}^{2}}{(1-c\tilde{b})}\right)
\label{lastineqb4}
\end{align}

On noting  that $f(\X) - \E (f(\X)) 	= \sum\limits_{i=1}^{n} \Delta_i$
and using \eqref{lastineqb4},
we have
\begin{align}
	\Prob{f(\X) - \E (f(\X)) > \tilde{\epsilon}}  &= \Prob{\sum\limits_{i=1}^n \Delta_i > \tilde{\epsilon}}\\
	&\leq \exp(-c \tilde{\epsilon})\E\left[\exp\left(c\sum\limits_{i=1}^n \Delta_i \right)\right]\\
	&\leq \exp(-c \tilde{\epsilon})\E\left[\exp\left(c\sum\limits_{i=1}^{n-1} \Delta_i \right)\E\left[\exp(c \Delta_n )\middle|\mathcal{F}_{n-1}\right]\right]\\
	& \le \exp(-c \tilde{\epsilon})\E\left[\exp\left(c\sum\limits_{i=1}^{n-1} \Delta_i \right)\exp\left(\frac{1}{2}\frac{c^{2}\tilde{\sigma}^{2}}{(1-c\tilde{b})}\right)\right]\\
	& \vdots \\
	& \le  \exp(-c \tilde{\epsilon})\exp\left(\frac{1}{2}\frac{nc^{2}\tilde{\sigma}^{2}}{(1-c\tilde{b})}\right) = \exp\left(-\frac{\tilde{\epsilon}^2}{2n\tilde{\sigma}^{2}+2\tilde{b}\tilde{\epsilon}}\right),
\end{align}
where the suppressed steps involve successively conditioning over $\mathcal{F}_{n-2}, \mathcal{F}_{n-3}, \ldots,\mathcal{F}_{0}$ and using \eqref{lastineqb4} at each step. The final equality comes from substituting the value of $c$. 
\end{proof}

\begin{proof}[Lemma \ref{lemma:wasserstein-dist-bound-subexp}] Define $n$ and $\epsilon$ as in the lemma, and let $\tilde{\epsilon}=\epsilon- \frac{384}{c\sqrt{n}}$.  Recall the notation $\mathcal{X}$ and $\mathcal{X}_{(i)}^{\prime}$ introduced in subsection \ref{sec:wasserstein-dist-bound-subgauss}.
Define $f$ such that $f(\X)=W_{1}(F_{n},F)$ is the Wasserstein distance between the EDF $F_{n}$ formed from the samples $\mathcal{X}$ and the CDF $F$ of $X$. For each $i=1,\ldots,n$, let $D_{i}=f(\mathcal{X})-f(\mathcal{X}_{(i)}^{\prime})$.

By Lemma \ref{lemma:subexpequiv}, $X$ satisfies Bernstein's condition with $\sigma=2/c$ and $b=1/c$. Consider $i\in\{1,\ldots,n\}$.
Using arguments similar to those employed in deriving \eqref{intereqn} in  the proof of Lemma \ref{lemma:wasserstein-dist-bound-subgauss} along with the Bernstein's condition for  $X_{i}$ and $X_{i}^{\prime}$, we can show that 
\[
\E\left(\left.\left|D_i\right|^k\right|\F_{i-1}\right) \le \frac1{2}\left(\frac{2\sigma}{n}\right)^2 (k!) \left(\frac{2b}{n}\right)^{k-2}
\]
for every $i=1,\ldots,n$ and every $k\geq 2$. 
Invoking Lemma \ref{lemma:mcdiarmid-subexp} with $f(\X)=W_1(F_n,F)$, $\tilde \sigma=\frac{2\sigma}{n}$ and $\tilde b=\frac{2b}{n}$ now yields
	\begin{align} 
	\Prob{ W_{1}(F_n,F) - \E( W_1(F_n,F))> \tilde \epsilon} \le \exp\left(-  \frac{n \tilde \epsilon^2}{8\sigma^2 + 4  b \tilde \epsilon}\right).
	\label{eq:ab23}
	\end{align}
Substituting for $\sigma$ and $b$ gives  
	\begin{align} 
	\Prob{ W_{1}(F_n,F) - \E( W_1(F_n,F))> \tilde\epsilon} \le \exp\left(-  \frac{n \tilde\epsilon^2}{\frac{32}{c^2} + \frac{4}{c}\tilde\epsilon}\right).
	\label{eq:ab2}
	\end{align}
	
	Next, applying the bound in \eqref{eq:leibd} with $p=1, q=4$ leads to $\zeta=0$. From Lemma \ref{lemma:subexpequiv}, we have
	$\E\left(\left|X\right|^4\right) \le \frac{(2/c)^2}{2} 4! \left(\frac{1}{c}\right)^{4-2}\le  \left(\frac{3}{c}\right)^4$, which implies
    $\top=\frac{3}{c}$. Using these values in \eqref{eq:leibd},  we obtain 
	\begin{align} 
	\E( W_1(F_n,F)) \le \frac{384}{c\sqrt{n}}. 
	\label{eq:ab1}
	\end{align}
	The lemma now follows by using the last inequality in \eqref{eq:ab2} and then substituting for $\tilde{\epsilon}$.
\end{proof}

\subsubsection{Proof of Lemma \ref{lemma:wasserstein-dist-bound-heavy}}
\label{sec:wasserstein-dist-bound-heavy-proof}
\begin{proof}
	The lemma follows directly by applying case (3) of Theorem 2 in \cite{fournier2015rate} to the r.v. $X$.
\end{proof}

%%%%%%%%%%%%%%%%%%%%%%%%%%%%%%%%%%%%%%%%%%%%%%%%%%%%%%%%%%%%%%%%%%%%%%%%%%%%%%%%%%%%%%%%%%%%%%%%%%%%%%%%%%%%%%%%%%%%%%%%%%%%%%%%%%
%%%%%%%%%%%%%%%%%%%%%%%%%%%%%%%%%%%%%%%%%%%%%%%%%%%%%%%%%%%%%%%%%%%%%%%%%%%%%%%%%%%%%%%%%%%%%%%%%%%%%%%%%%%%%%%%%%%%%%%%%%%%%%%%%%

\subsection{Proofs of the claims in Section \ref{sec:risk-abstract}}
\label{sec:t1-proofs}

\subsubsection{Proof of Lemma \ref{lemma:oce-t1}}
\label{sec:oce-t1-proof}
\begin{proof}
Choose $\xi\in \R$ arbitrarily and let $g_\xi(x) = L^{-1}[\xi + \phi(x-\xi)]$.
Then,
\begin{align*}
\int_\R g_\xi(x) dF_{X}(x) &= L^{-1}[\xi + \E\{\phi(X-\xi)\}] \triangleq D_{X}(\xi), \textrm{ and }\\
\int_\R g_\xi(x) dF_{Y}(x) &= L^{-1}[\xi + \E\{\phi(Y-\xi)\}] \triangleq D_{Y}(\xi).
\end{align*}
Observing that $g_\xi$ is $1$-Lipschitz in $x$ for every $\xi \in \R$ and using \eqref{lipwass2}, we obtain
\begin{align*}
\left| D_{X}(\xi) - D_Y(\xi) \right| \le W_{1}(F_{X},F_{Y}) \textrm{ for every } \xi \in \R. 
\end{align*}
Choose $m>0$ arbitrarily, and let $\xi_1, \xi_2 \in \R$ be such that
\begin{align*}
D_{X}(\xi_1) \le \inf_\xi D_{X}(\xi) + \frac{1}{m}, \textrm{ and } D_Y(\xi_2) \le \inf_\xi D_Y(\xi) + \frac{1}{m}.
\end{align*} 
Then, we obtain
\begin{align*}
&-W_{1}(F_{X},F_{Y}) - \frac{1}{m} \le D_X(\xi_1) - D_Y(\xi_1) - \frac{1}{m} \le \inf_\xi D_X(\xi) - \inf_\xi D_Y(\xi)\\
&\le D_X(\xi_2) - D_Y(\xi_2) + \frac{1}{m} \le W_{1}(F_{X},F_{Y}) + \frac{1}{m}.
\end{align*}
Since the chain of inequalities above hold for every $m>0$, we conclude that
\begin{align} 
\left|\inf_\xi D_X(\xi) - \inf_\xi D_Y(\xi)\right| \le W_{1}(F_{X},F_{Y}).
\label{eq:t291}
\end{align}
By definition, $\inf_\xi D_X(\xi) = L^{-1}\oce(X)$ and $\inf_\xi D_Y(\xi) = L^{-1}\oce(Y)$. Hence \eqref{ocelip} follows immediately from \eqref{eq:t291}.
\end{proof}

%%%%%%%%%%%%%%%%%%%%%%%%%%%%%%%%%%%%%%%%%%%%%%%%%%%%%%%%%%%%%%%%%%%%%%%%%%%%%%%%%%%%%%%%%%%%%%%%%%%%%%%%%%%%%%%%%%%%%%%%%%%%%%%%%%
%%%%%%%%%%%%%%%%%%%%%%%%%%%%%%%%%%%%%%%%%%%%%%%%%%%%%%%%%%%%%%%%%%%%%%%%%%%%%%%%%%%%%%%%%%%%%%%%%%%%%%%%%%%%%%%%%%%%%%%%%%%%%%%%%%

\subsubsection{Proof of Lemma \ref{lemma:ubsr-t1}}
\label{sec:ubsr-t1-proof}
\begin{proof}
    For convenience, define $f_{X}:\R\rightarrow \R$ by $f_{X}(\xi)=\mathbb{E}[l(X-\xi))]$, and note that $S_{\alpha}(X)=\inf\{\xi\in\R: f_{X}(\xi)\leq \alpha\}.$ Also, since $l$ is nondecreasing, it follows that $f_{X}$ is nonincreasing. Define $f_{Y}$ in an identical fashion. 
    
    Given  $\xi\in\R$, note that 
$|f_{X}(\xi)-f_{Y}(\xi)|=K|\mathbb{E}[K^{-1}l(X-\xi)]-\mathbb{E}[K^{-1}l(Y-\xi)]|$. Since the function $x\mapsto K^{-1}l(x-\xi)$ is $1$-Lipschitz, it follows from Lemma \ref{lemma:lipschitz-wasserstein} and the last equality that 
\begin{equation}
    |f_{X}(\xi)-f_{Y}(\xi)|\leq KW_{1}(F_{X},F_{Y})
    \label{ubsrpf1}
\end{equation}
for every $\xi\in\R$. 

Next, let $\epsilon>0$, and choose $\xi_{1},\xi_{2}\in\R$ such that 
\begin{equation}
S_{\alpha}(X)-\epsilon\leq \xi_{1}<  S_{\alpha}(X) < \xi_{2}<S_{\alpha}(X)+\epsilon.
\label{ubsrpf6}
\end{equation}
It follows from the definition of $S_{\alpha}(X)$ and the nonincreasing nature of $f_{X}$  that $f_{X}(\xi_{1})>\alpha$ while $f_{X}(\xi_{2})\leq \alpha$. Next, define $\xi_{1}^{\prime}=\xi_{1}-\frac{K}{k}W_{1}(F_{X},F_{Y})$ and $\xi_{2}^{\prime}=\xi_{2}+\frac{K}{k}W_{1}(F_{X},F_{Y})$.  Since $\xi_{1}^{\prime}\leq \xi_{1}$, the third assumption on $l$ implies that $l(X-\xi_{1}^{\prime})-l(X-\xi_{1})\geq k(\xi_{1}-\xi_{1}^{\prime})=KW_{1}(F_{X},F_{Y})$ almost surely. Taking expectations gives 
\begin{equation}
f_{X}(\xi_{1}^{\prime})-f_{X}(\xi_{1})\geq KW_{1}(F_{X},F_{Y}).
    \label{ubsrpf2}
\end{equation}
Similarly, it can be shown that 
\begin{equation}
f_{X}(\xi_{2})-f_{X}(\xi_{2}^{\prime})\geq KW_{1}(F_{X},F_{Y}).
    \label{ubsrpf3}
\end{equation}

Applying \eqref{ubsrpf1} and \eqref{ubsrpf2} along with our choice of $\xi_{1}$, we get
\begin{equation}
f_{Y}(\xi_{1}^{\prime})\geq f_{X}(\xi_{1}^{\prime})-KW_{1}(F_{X},F_{Y}) \geq f_{X}(\xi_{1})>\alpha.
\label{ubsrpf4}
\end{equation}
Similarly, applying \eqref{ubsrpf1} and \eqref{ubsrpf3} along with our choice of $\xi_{2}$, we get
\begin{equation}
f_{Y}(\xi_{2}^{\prime})\leq f_{X}(\xi_{2}^{\prime})+KW_{1}(F_{X},F_{Y}) \leq f_{X}(\xi_{2})\leq \alpha.
\label{ubsrpf5}
\end{equation}
The inequalities \eqref{ubsrpf4} and \eqref{ubsrpf5} together imply that $\xi_{1}^{\prime}\leq S_{\alpha}(Y)\leq \xi_{2}^{\prime}$. Substituting for $\xi_{1}^{\prime}$ and $\xi_{2}^{\prime}$ and using the inequalities \eqref{ubsrpf6} gives
\[S_{\alpha}(X)-\frac{K}{k}W_{1}(F_{X},F_{Y})-\epsilon\leq S_{\alpha}(Y)\leq S_{\alpha}(X)+\frac{K}{k}W_{1}(F_{X},F_{Y})+\epsilon.\]
% On noting that $\epsilon>0$ was chosen arbitrarily, we conclude that (\ref{ubrNotice 
that $g_{\mu,\xi}(\cdot)$ is a $1$-Lipschitz function for each $\xi$ and $\mu$. 
Hence, by Lemma \ref{lipwass2},
\eqref{ubrlip} holds. This completes the proof.  
\end{proof}

%%%%%%%%%%%%%%%%%%%%%%%%%%%%%%%%%%%%%%%%%%%%%%%%%%%%%%%%%%%%%%%%%%%%%%%%%%%%%%%%%%%%%%%%%%%%%%%%%%%%%%%%%%%%%%%%%%%%%%%%%%%%%%%%%%
%%%%%%%%%%%%%%%%%%%%%%%%%%%%%%%%%%%%%%%%%%%%%%%%%%%%%%%%%%%%%%%%%%%%%%%%%%%%%%%%%%%%%%%%%%%%%%%%%%%%%%%%%%%%%%%%%%%%%%%%%%%%%%%%%%

\subsubsection{Proof of Lemma \ref{lemma:cpt-t2}}
\label{sec:cpt-t2-proof}

\begin{proof}
Choose  $\tau>0$. Let $X$ and $Y$  be r.v.s having CDFs $F$ and $G$,  respectively. Recall that the CDF of $\hat{Y}=Y\indic{-\tau\leq Y <\tau}$ is $G|_{\tau}$.	Then 
\begin{equation}
C(F)-C(G|_{\tau})=\Delta^{+}-\Delta^{-},\label{cptt2pf1}
\end{equation}
where 
\begin{align}
\Delta^{+} &= \int_{0}^{\infty} w^+\left(\Prob{u^+({X})>z}\right) \mathrm{d}z- \int_{0}^{\infty} w^+\left(\Prob{u^+(\hat{Y})>z}\right) \mathrm{d}z,\ \mbox{and}\label{cptt2pf2}\\
\Delta^{-} &= \int_{0}^{\infty} w^-\left(\Prob{u^-({X})>z}\right) \mathrm{d}z- \int_{0}^{\infty} w^-\left(\Prob{u^-(\hat{Y})>z}\right) \mathrm{d}z.\label{cptt2pf3}
\end{align}
On noting that $\Prob{u^+(\hat{Y})>z}=0$ for $z\geq u^{+}(\tau)$ and $\Prob{u^+(\hat{Y})>z}=\Prob{u^+(Y)>z}$ for $0\leq z<u^{+}(\tau)$, we get 
\begin{align}
\Delta^+ &= \int_0^{u^{+}(\tau)} w^+\left(\Prob{u^+(X)>z}\right) \mathrm{d}z- \int_0^{u^{+}(\tau)} w^+\left(\Prob{u^+(Y)>z}\right) \mathrm{d}z\nonumber \\
& {}+\int_{u^{+}(\tau)}^{\infty} w^+\left(\Prob{u^+(X)>z}\right) \mathrm{d}z.
\label{eq:Deltanplus}
\end{align}
Using the assumption of H\"{o}lder continuity on the weight function $w^{+}$ along with $w^{+}(0)=0$  gives
 \begin{align}
 &\left|\Delta^+\right|  \le L \int_0^{u^{+}(\tau)} |F^+(z)-G^+(z)|^\alpha \mathrm{d}z+ L \int_{u^{+}(\tau)}^{\infty} [1-F^+(z)]^\alpha \mathrm{d}z,\
 \label{eq:cpt-conc-11}
 \end{align}
where $F^+(\cdot)$ and $G^{+}(\cdot)$ are the CDFs of the r.v.s $u^+(X)$ and $u^{+}(Y)$, respectively.

Applying Jensen's inequality to the concave function $x\mapsto x^{\alpha}$ after normalizing the Lebesgue measure on the interval $[0,u^{+}(\tau)]$, we obtain
\begin{align*}
\frac{1}{u^{+}(\tau)}\int_{0}^{u^{+}(\tau)}|F^+(z)-G^+(z)|^{\alpha}\mathrm{d}z \leq & \left[\frac{1}{u^{+}(\tau)}\int_{0}^{u^{+}(\tau)}|F^+(z)-G^+(z)|\mathrm{d}z\right]^{\alpha} \\
\leq & \left[\frac{1}{u^{+}(\tau)}\int_{0}^{\tau}|F(v)-G(v)|(u^{+})^{\prime}(v)\mathrm{d}v\right]^{\alpha} \\
\leq & \left[\frac{K^{+}}{u^{+}(\tau)}\int_{-\infty}^{\infty}|F(v)-G(v)|\mathrm{d}v\right]^{\alpha}. 
\end{align*}
In the second inequality above, which follows by using the substitution $z=u^{+}(v)$, $(u^{+})^{\prime}(v)$ denotes the derivative of $u^{+}(\cdot)$ at $v$. 
Applying the second equality in Lemma \ref{lemma:lipschitz-wasserstein} to the CDFs $F$ and $G$ gives 
\[\int_{0}^{u^{+}(\tau)}|F^+(z)- G^+(z)|^{\alpha}\mathrm{d}z \leq [K^{+}W_{1}(F,G)]^{\alpha} [u^{+}(\tau)]^{1-\alpha}.\]
%Substituting in \eqref{eq:cpt-conc-1} gives $\left|\Delta^+\right|\leq L [K^{+}W_{1}(F,G)]^{\alpha}[u^{+}(\tau)]^{1-\alpha}$. 
Our assumption that the derivative of $u^{+}$ on $[0,\infty)$ takes values in $[k^{+},K^{+}]$ implies that  $k^{+}\tau\leq u^{+}(\tau)\leq K^{+}\tau$. Hence, we get 
\begin{equation}
\int_{0}^{u^{+}(\tau)}|F^+(z)-G^+(z)|^{\alpha}\mathrm{d}z \leq K^{+} [W_{1}(F,G)]^{\alpha}\tau^{1-\alpha}.
\label{cptt2pf4}
\end{equation}
We also see that $F^{+}(z)=\Prob{u^{+}(X)\leq z}\geq \Prob{K^{+}X\leq z}=F(z/K^{+})$. Using this along with $u^{+}(\tau)\geq k^{+}\tau$ in the second integral in 
\eqref{eq:cpt-conc-11} gives 
\begin{equation}
\int_{u^{+}(\tau)}^{\infty}[1-F^+(z)]^\alpha \mathrm{d}z\leq \int_{k^{+}\tau}^{\infty}[1-F(z/K^{+})]^{\alpha}\mathrm{d}z.
\label{cptt2pf6}
\end{equation}
Performing  a simple change of variables in \eqref{cptt2pf6}, and using the outcome along with \eqref{cptt2pf4} in \eqref{eq:cpt-conc-11} gives
\begin{equation}
\left|\Delta^+\right|  \leq LK^{+} [W_{1}(F,G)]^{\alpha}\tau^{1-\alpha}+LK^{+}\int_{\frac{k^{+}}{K^{+}}\tau}^{\infty}[1-F(z)]^{\alpha}\mathrm{d}z.
\label{cptt2pf7}
\end{equation}

It follows from almost identical arguments that 
\begin{equation}
\left|\Delta^{-}\right|\leq LK^{-} [W_{1}(F,G)]^{\alpha}\tau^{1-\alpha}+LK^{-}\int^{-\frac{k^{-}}{K^{-}}\tau}_{-\infty}[F(z)]^{\alpha}\mathrm{d}z.
\label{cptt2pf5}
\end{equation}
Using \eqref{cptt2pf7} and \eqref{cptt2pf5} in \eqref{cptt2pf1} completes the proof. 
\end{proof}
%%%%%%%%%%%%%%%%%%%%%%%%%%%%%%%%%%%%%%%%%%%%%%%%%%%%%%%%%%%%%%%%%%%%%%%%%%%%%%%%%%%%%%%%%%%%%%%%%%%%%%%%%%%%%%%%%%%%%%%%%%%%%%%%%%
%%%%%%%%%%%%%%%%%%%%%%%%%%%%%%%%%%%%%%%%%%%%%%%%%%%%%%%%%%%%%%%%%%%%%%%%%%%%%%%%%%%%%%%%%%%%%%%%%%%%%%%%%%%%%%%%%%%%%%%%%%%%%%%%%%

\subsubsection{Proof of Lemma \ref{lemma:rdeu-t2}}
\label{sec:rdeu-t2-proof}

\begin{proof}
\begin{align*}
V(F) &= \int_{-\infty}^{\infty} u(x) \mathrm{d}(w \circ F)(x)\\
           & = \int_{0}^{\infty} \left(\int_{0}^{u(x)} \mathrm{d}z\right) \mathrm{d}(w \circ F)(x) + \int_{-\infty}^{0} \left(\int_{0}^{u(x)} \mathrm{d}z\right) \mathrm{d}(w \circ F)(x)\\
          &= \int_{0}^{\infty} \left(\int_{0}^{u(x)} \mathrm{d}z\right) \mathrm{d}(w \circ F)(x) - \int_{-\infty}^{0} \left(\int_{0}^{u^{-}(x)} \mathrm{d}z\right) \mathrm{d}(w \circ F)(x)\\
           & = \int_{0}^{\infty} \left(\int_{u^{-1}(z)}^{\infty} \mathrm{d}(w \circ F)(x)\right) \mathrm{d}z - \int_{0}^{\infty} \left(\int_{-\infty}^{(u^{-})^{-1}(z)} \mathrm{d}(w \circ F)(x)\right) \mathrm{d}z\\
           &=\underbrace{\int_{0}^{\infty} \left(1- w\left(F(u^{-1}(z))\right)\right) \mathrm{d}z}_{(A)} 
                 -  \underbrace{\int_{0}^{\infty} w\left(F((u^{-})^{-1}(z))\right) \mathrm{d}z}_{(B)},\stepcounter{equation}\tag{\theequation}\label{eq:t122}
\end{align*}
The first term on the RHS above may be simplified as
\begin{align*}
(A) &= \int_{0}^{\infty} \left(1- w\left(\Prob{X\le u^{-1}(z)}\right)\right) \mathrm{d}z = \int_{0}^{\infty} \left(1- w\left(\Prob{u(X)\le z}\right)\right) \mathrm{d}z\\
        & = \int_{0}^{\infty} w^+\left(\Prob{u(X)> z}\right) \mathrm{d}z = \int_{0}^{\infty} w^+\left(\Prob{u^+(X)> z}\right) \mathrm{d}z,
\end{align*}
where the third equality follows by the definition of $w^+$, and the last equality by observing that $\{x:u(x)>z\}=\{x:u^+(x)>z\}$ for every $z>0$.
	
Along similar lines, we obtain
\begin{align*}
(B) &= \int_{0}^{\infty} w\left(F((u^{-})^{-1}(z))\right) \mathrm{d}z 
= \int_{0}^{\infty} w\left(\Prob{X\le (u^{-})^{-1}(z)}\right) \mathrm{d}z \\& = \int_{0}^{\infty} w^-\left(\Prob{u^-(X)\ge z}\right) \mathrm{d}z
%\\ &  = -\int_{\infty}^{0} w^-\left(\Prob{u^-(X)> y}\right) \mathrm{d}y 
=  \int_{0}^{\infty} w^-\left(\Prob{u^-(X)> z}\right) \mathrm{d}z,
\end{align*}
where we used the fact that $u^{-}$ is decreasing.
%$\{u(X)\le z \le 0\} = \{-u^-(X)\le z\} = \{u^-(X)>-z\}$.
The claim follows.	
	\end{proof}

%%%%%%%%%%%%%%%%%%%%%%%%%%%%%%%%%%%%%%%%%%%%%%%%%%%%%%%%%%%%%%%%%%%%%%%%%%%%%%%%%%%%%%%%%%%%%%%%%%%%%%%%%%%%%%%%%%%%%%%%%%%%%%%%%%
%%%%%%%%%%%%%%%%%%%%%%%%%%%%%%%%%%%%%%%%%%%%%%%%%%%%%%%%%%%%%%%%%%%%%%%%%%%%%%%%%%%%%%%%%%%%%%%%%%%%%%%%%%%%%%%%%%%%%%%%%%%%%%%%%%
\subsection{Proofs of the claims in Section \ref{sec:expec-bounds}}
\label{sec:proofs-expec}
\subsubsection{Proof of Theorem \ref{thm:t1-expec-bd}}
\label{sec:proof-expec-t1}
\begin{proof}
	Let $F$ denote the distribution of $X$. Then, we have
	\begin{align}
		\left| \rho_n - \rho(X)\right| &=  \left| \rho(F_n) - \rho(F)\right| \le L (W_1(F_n,F))^\kappa. \label{eq:e1}
	\end{align}
	Using Theorem 3.1 of \cite{lei2020convergence}, we have
	\begin{align*}
		\E[ W_1(F_n,F)] \le  \frac{2^{\beta+3} \top}{n^{\min(\frac{1}{2},1-\frac{1}{\beta})}}, \textrm{ implying } \\
		\E \left[W_1(F_n,F)^\kappa\right] \le  \left[\E (W_1(F_n,F))\right]^\kappa \le \left(\frac{2^{\beta+3} \top}{n^{\min(\frac{1}{2},1-\frac{1}{\beta})}}\right)^\kappa,
	\end{align*}
	where we used Jensen's inequality to infer $\E( W_1(F_n,F)^\kappa) \le \left[\E (W_1(F_n,F))\right]^\kappa$, since $\kappa \in (0,1]$.
	The main claim follows by substituting the bound obtained above in \eqref{eq:e1}.
\end{proof}

\subsubsection{Proof of Theorem \ref{thm:t2-expec-bd}}
\label{sec:proof-expec-t2}
\begin{proof}
	Since $\rho$ is \ref{type:two}\ measure, we have
	\begin{align}
		\left| \rho(X) - \rho_{n,\tau}\right| = \left| \rho(X) - \rho(F_n|_{\tau})\right| &\leq \underbrace{L_{1} \left(W_1(F,F_n)\right)^{\alpha_{1}} \tau^{\gamma}}_{I_1}+\underbrace{L_{2}\int_{K_{1}\tau}^{\infty}[1-F(z)]^{\alpha_{2}}\mathrm{d}z}_{I_2} \nonumber \\&  {}+ \underbrace{L_{3}\int^{-K_{2}\tau}_{-\infty}[F(z)]^{\alpha_{3}}\mathrm{d}z}_{I_3}.
		\label{eq:t2conc1-expec}
	\end{align} 
	The second term on the RHS above can be bounded as follows:
	\begin{align}
		I_{2}=	L_2\int_{K_{1}\tau}^{\infty}[1-F(z)]^{\alpha_{2}}\mathrm{d}z &\le L_2 \top^{\alpha_2}\int_{K_{1}\tau}^{\infty} \frac{1}{z^{\beta\alpha_2}}\mathrm{d}z 
		= \frac{L_2 \top^{\alpha_2}}{(\beta\alpha_2-1) (K_1\tau)^{\beta\alpha_2-1}},\label{eq:t2-i2-expec}
	\end{align}	
	where we used the fact that $1-F(z) = \Prob{X>z} \le \Prob{ |X|^\beta>z^\beta} \le \frac{\top}{ z^\beta}$, since the r.v. $X$ satisfies \ref{ass:heavyvariant}.

	Along similar lines, the third term on the RHS of \eqref{eq:t2conc1-expec} can be bounded as follows:
	\begin{align}
		I_{3}=	L_3\int_{-\infty}^{-K_2\tau}[F(-z)]^{\alpha_{3}}\mathrm{d}z &\le 
		\frac{L_3 \top^{\alpha_3}}{(\beta\alpha_3-1) (K_2\tau)^{\beta\alpha_3-1}}.\label{eq:t2-i3-expec}
	\end{align}	
	
	As in the proof of Theorem \ref{thm:t1-expec-bd}, we have
	\begin{align}
		\E \left(W_1(F_n,F)^{\alpha_1}\right) \le  \left[\E (W_1(F_n,F))\right]^{\alpha_1} \le 
		\left(\frac{2^{\beta+3} \top}{n^{\min(\frac{1}{2},1-\frac{1}{\beta})}}\right)^{\alpha_1}.\label{eq:t2-i1-expec}
	\end{align}
	The claim follows by taking expectations in \eqref{eq:t2conc1-expec} followed by substitution of the bounds given by \eqref{eq:t2-i2-expec}, \eqref{eq:t2-i3-expec} and \eqref{eq:t2-i1-expec}.
\end{proof}

\subsubsection{Proof of Corollary \ref{cor:cpt-expec-bd}}
\label{sec:proof-expec-cpt}
\begin{proof}
	From Lemma \ref{cpttt2lem}, we know that CPT is \ref{type:two}, with the following parameters:
	\[ L_1 = (K^+ + K^-)L, L_2 = LK^+,L_3 = LK^-, \gamma= 1-\alpha, \alpha_1=\alpha_2=\alpha_3=\alpha, K_1 = \frac{k^+}{K^+}, \textrm{ and }\]	
	$ K_2 = \frac{k^-}{K^-}$.
	Here, $\alpha, L$ are the exponent and constant of the  \holdernosp-continuous weight function in the definition of CPT.

The claim follows by substituting in \eqref{eq:t2-expec-bd} the values above, and the choice of $\tau$ specified in the corollary statement.
\end{proof}

%%%%%%%%%%%%%%%%%%%%%%%%%%%%%%%%%%%%%%%%%%%%%%%%%%%%%%%%%%%%%%%%%%%%%%%%%%%%%%%%%%%%%%%%%%%%%%%%%%%%%%%%%%%%%%%%%%%%%%%%%%%%%%%%%%
%%%%%%%%%%%%%%%%%%%%%%%%%%%%%%%%%%%%%%%%%%%%%%%%%%%%%%%%%%%%%%%%%%%%%%%%%%%%%%%%%%%%%%%%%%%%%%%%%%%%%%%%%%%%%%%%%%%%%%%%%%%%%%%%%%
\subsection{Proofs of the claims in Section \ref{sec:subgauss}}
\label{sec:subgauss-proofs}
\subsubsection{Proof of Theorem \ref{thm:t1-conc-bd}}
\label{sec:t1-conc-bd-proof}

\begin{proof}
Let $F$ denote the CDF of $X$, and $F_{n}$ denote the EDF formed from $n$ independent samples of $X$.  
Consider the event $A=\{W_{1}(F,F_{n})>\left(\dfrac{\epsilon}{L}\right)^{\frac{1}{\kappa}}\}$. Note that, by \eqref{tonedef}, the event $\{|\rho_{n}-\rho(X)|>\epsilon\}$ is contained in $A$. Equation \eqref{c1t1bd} now follows by applying Lemma \ref{lemma:wasserstein-dist-bound} to the event $A$. 
\end{proof}

\subsubsection{Proof of Theorem \ref{thm:t1-conc-bd-subgauss-const}}
\label{sec:t1-conc-bd-subgauss-const-proof}
\begin{proof}
    The proof is similar to that of Theorem \ref{thm:t1-conc-bd}, except that one invokes Lemma \ref{lemma:wasserstein-dist-bound-subgauss} instead of Lemma \ref{lemma:wasserstein-dist-bound}. 
\end{proof}

\subsubsection{Proof of Corollary \ref{cvarconcprop}}
\label{sec:cvarconcprop-proof}

\begin{proof}
	From Lemma \ref{lemma:oce-t1}, we have that OCE is a \ref{type:one}\ risk measure with parameters $L=\frac{1}{1-\alpha}$, and $\kappa=1$. 
The proof now follows by an application of Theorem \ref{thm:t1-conc-bd}.
\end{proof}

\subsubsection{Proof of Corollary \ref{cor:oce-subgauss}}
\label{sec:oce-subgauss-proof}

\begin{proof}
	The proof is similar to that of Corollary \ref{cvarconcprop} except that one invokes Theorem \ref{thm:t1-conc-bd-subgauss-const} instead of Theorem \ref{thm:t1-conc-bd}.
\end{proof}

\subsubsection{Proof of Corollary \ref{specconcprop}}
\label{sec:specconcprop-proof}
\begin{proof}
The result follows by applying Theorem \ref{thm:t1-conc-bd}, after observing from Lemma \ref{lemma:srm-t1} that a spectral risk measure is of type \ref{type:one}\ with parameters $L=K$ and $\kappa=1$.
\end{proof}
\subsubsection{Proof of Corollary \ref{cor:spec-subgauss}}
\label{sec:spec-subgauss-proof}

\begin{proof}
	The proof is similar to that of Corollary \ref{specconcprop} except that one invokes Theorem \ref{thm:t1-conc-bd-subgauss-const} instead of Theorem \ref{thm:t1-conc-bd}.
\end{proof}

\subsubsection{Proof of Corollary \ref{ubsrconcprop}}
\label{sec:ubsrconcprop-proof}

\begin{proof}
		The result follows by applying Theorem \ref{thm:t1-conc-bd}, after observing that UBSR is a \ref{type:one}\ risk measure with parameters $L=K/k$ and $\kappa=1$ (see Lemma \ref{ubsrt1lem}).
\end{proof}

\subsubsection{Proof of Corollary \ref{cor:ubsr-subgauss}}
\label{sec:ubsr-subgauss-proof}

\begin{proof}
	The proof is similar to that of Corollary \ref{ubsrconcprop} except that one invokes Theorem \ref{thm:t1-conc-bd-subgauss-const} instead of Theorem \ref{thm:t1-conc-bd}.
\end{proof}

\subsubsection{Proof of Theorem \ref{thm:t2conc-bdd}}
\label{sec:t2conc-bdd-proof}
\begin{proof}
Recall that the since the r.v. $X$ is bounded in $[-B_2, B_1]$, we have $F(z)=0$, for $z<-B_2$ and $F(z)=1$ for $z>B_1$. Thus, on choosing $\tau = \max\left(\frac{B_1}{K_1},\frac{B_2}{K_2}\right)$, the second and third integrals on the RHS of \eqref{ttwodef} vanish, and the risk measure $\rho$ satisfies
\begin{equation}
\label{eq:t2-bdd1}
\left| \rho_n - \rho(X)\right| \leq L_1 \left(W_1(F,F_n)\right)^{\alpha_1} \tau^{\gamma},
\end{equation} 
where $\alpha_1$, $\gamma$ and $L_1$ are parameters of the \ref{type:two}\ risk measure $\rho$ as in \eqref{ttwodef}.  	
	
Fix $\epsilon>0$ and consider the event $A=\{W_{1}(F,F_{n})> [\epsilon/\{L_1 \tau^\gamma\}]^{1/\alpha_1}\}$, where $\tau$ is specified above. Recall that  a bounded r.v. is sub-Gaussian with parameter $\sigma$ determined by its bounds, and hence satisfies \ref{ass:c1} with $\beta=2$. Hence applying Lemma \ref{lemma:wasserstein-dist-bound} with $\beta=2$, we obtain  
\[\Prob{A} \leq c_1\exp\left(-c_2 n\left(\frac{\epsilon}{L_1\tau^{\gamma}}\right)^{\frac{2}{\alpha_1}}\right),\]
where $c_1,$ and $c_2$ are constants that depend on $B_{1}$ and $B_{2}$. 
By \eqref{eq:t2-bdd1}, the event $A$ contains the event $\{\left|  \rho_n - \rho(X)\right| > \epsilon\}$, 	and the claim follows.
\end{proof}

\subsubsection{Proof of Theorem \ref{thm:t2conc-bdd-const}}
\label{sec:t2conc-bdd-const-proof}
\begin{proof}
	Follows in a similar manner as Theorem \ref{thm:t2conc-bdd}, except that we invoke Lemma \ref{lemma:wasserstein-dist-bound-subgauss} in place of Lemma \ref{lemma:wasserstein-dist-bound}. For this invocation, we have used the fact that the sub-Gaussianity paramter $\sigma=\frac{(B_1+B_2)}{2}$ for a r.v. bounded within $[-B_2, B_1]$.
	\end{proof}
\subsubsection{Proof of Corollary \ref{cor:cpt-bounded}}
\label{sec:cpt-bounded-proof}

\begin{proof}
The result follows in a straightforward fashion by applying Theorem \ref{thm:t2conc-bdd} to CPT-value, which is a \ref{type:two}\ risk measure (see Lemma \ref{cpttt2lem}).
\end{proof}

\subsubsection{Proof of Corollary \ref{cor:cpt-bounded-const}}
\label{sec:cpt-bounded-const-proof}

\begin{proof}
	The result follows in a straightforward fashion by applying Theorem \ref{thm:t2conc-bdd-const}.
\end{proof}

\subsubsection{Proof of Theorem \ref{thm:t2conc-subGauss}}
\label{sec:t2conc-subGauss-proof}
\begin{proof}
	Since $\rho$ is \ref{type:two}\ measure, we have
	\begin{align}
	\left| \rho(X) - \rho_{n,\tau}\right| = \left| \rho(X) - \rho(F_n|_{\tau})\right| &\leq \underbrace{L_{1} \left(W_1(F,F_n)\right)^{\alpha_{1}} \tau^{\gamma}}_{I_1}+\underbrace{L_{2}\int_{K_{1}\tau}^{\infty}[1-F(z)]^{\alpha_{2}}\mathrm{d}z}_{I_2} \nonumber \\&  {}+ \underbrace{L_{3}\int^{-K_{2}\tau}_{-\infty}[F(z)]^{\alpha_{3}}\mathrm{d}z}_{I_3}.
	\label{eq:t2conc1}
	\end{align} 
The second term on the RHS above can be bounded as follows:
\begin{align}
L_2\int_{K_{1}\tau}^{\infty}[1-F(z)]^{\alpha_{2}}\mathrm{d}z &\le L_2 \top^{\alpha_2}\int_{K_{1}\tau}^{\infty} \exp\left(- \alpha_2\gamma z^\beta\right)\mathrm{d}z \nonumber\\
&\le 
L_2\top^{\alpha_2}\int_{K_{1}\tau}^{\infty} \left(\frac{z}{K_1\tau}\right)^{\beta-1}   \exp\left(- \alpha_2\gamma z^\beta\right)\mathrm{d}z   \nonumber\\
&= \frac{L_2}{\left(K_1\tau\right)^{\beta-1} \alpha_2\gamma (\beta-1)} \exp\left(-\alpha_2 \gamma \left(K_1\tau\right)^{\beta}\right),\label{eq:t2-i2}
\end{align}	
where we used the fact that, for $z>0$, we have 
\[1-F(z) = \Prob{X>z} =\Prob{|X|>z}= \Prob{\exp(\gamma |X|^\beta)>\exp(\gamma z^\beta)} \le \top \exp\left(- \gamma z^\beta\right),\] since the r.v. $X$ satisfies \ref{ass:c1}.

Along similar lines, the third term on the RHS of \eqref{eq:t2conc1} can be bounded as follows:
\begin{align}
L_3\int_{-\infty}^{-K_2\tau}[F(z)]^{\alpha_{3}}\mathrm{d}z &\le L_3\top^{\alpha_3}\int_{-\infty}^{-K_2\tau} \exp\left(-\alpha_3\gamma (-z)^\beta\right)\mathrm{d}z \nonumber\\
&
=L_3\top^{\alpha_3}\int_{K_2\tau}^{\infty} \exp\left(-\alpha_3\gamma z^\beta\right)\mathrm{d}z\nonumber \\
&\le 
L_3 \top^{\alpha_3}\int_{K_2\tau}^{\infty} \left(\frac{z}{K_2\tau}\right)^{\beta-1}   \exp\left(-\alpha_3\gamma z^\beta\right)\mathrm{d}z  \nonumber\\
&= 
\frac{L_3}{\left(K_2\tau\right)^{\beta-1} \alpha_3\gamma (\beta-1)} \exp\left(-\alpha_3 \gamma \left(K_2\tau\right)^{\beta}\right),\label{eq:t2-i3}
\end{align}	
where the first inequality holds because, for $z<0$, we have 
\[F(z) = \Prob{X\le z} =\Prob{|X|\ge -z}= \Prob{\exp(\gamma |X|^\beta)\ge\exp(\gamma (-z)^\beta)} \le \top \exp\left(- \gamma (-z)^\beta\right).\] The last inequality above uses Markov's inequality and \ref{ass:c1}.

Using \eqref{eq:t2-i2} and \eqref{eq:t2-i3}, we have
\begin{align}\Prob{\left| \rho(X) - \rho_{n,\tau}\right|>\epsilon}=\Prob{I_{1}+I_{2} + I_3>\epsilon}= \Prob{I_{1}>\epsilon-I_{1} - I_2}\leq \Prob{I_{1}>\epsilon'},
\label{thm38ineq1}
\end{align}
where $\epsilon'$ is as defined in the theorem statement.

Applying Lemma \ref{lemma:wasserstein-dist-bound}, we obtain
\begin{align}
&\Prob{I_1 > \epsilon'} = \Prob{W_1(F,F_n) > \left(\frac{\epsilon'}{L_1 \tau^{\gamma}}\right)^{\frac{1}{\alpha_1}}} \le c_1\exp\left(-\frac{c_2 n(\epsilon')^{2/\alpha_1}}{(L_1\tau^\gamma)^{2/\alpha_1}}\right), % \\
\label{thm38ineq2}
\end{align} 
where $c_1$ and $c_2$ are $\sigma$-dependent constants. 	This completes the proof.
\end{proof}	

%%%%%%%%%%%%%%%%%%%%%%%%%%%%%%%%%%%%%%%%%%%%%%%%%%%%%%%%%%%%%%%%%%%%%%%%%%%%%%%%%%%%%%%%%%%%%%%%%%%%%%%%%%%%%%%%%%%%%%%%%%%%%%%%%%%%%%%%%%%%%%%%%%%%%%%%%%%%%%%%%%%%%%%%%%%%%%%%%%%%%%%%%%%%%%%%%%%%%%%%%%%%%%%%%%%%%%%%%%%%%%%%%%%%%%%%%%%%%%%%%%%%%%%%%%%%%%%%%%%%%%%%%%%%%%%%%%%%%%%%%%%%%%%%%%%%%%%%%%%%%%%%%%%%%%%%%%%%%%%%%%%%%%%%%%%%%%%%%%%%%%%%%%%%%%%%%%%%%%%%%%%%%%%%%%%%%%%%%%%%%%%%%%%%%%%%%%%%%%%%%%%%%%%%%%%%%%%%%%%%%%%%%%%%%%%%%%%%%%%%%%%%%%%%%%%%%%%%%%%%%%%%%%%%%%%%%%%%%%%%%%%%%%%%%%%%%%%%%%%%%%%%%%%%%%%%%%%%%%%%%%%%%%%%%%%%%%%%%%%%%%%%%%%%%%%%%%%%%%%%%%%%%%%%%%%%%%%%%%%%%%%%%%%%%%%%%%%%%%%%%%%%%%%%%%%%%%%%%%%%%%%%%%%%%%%%%%%%%%%%%%%%%%%%%%%%%%%%%%%%%%%%%%%%%%%%%%%%%%%%%%%%%%%%%%%%%%%%%%%%%%%%%%%%%%%%%%%%%%%%%%%%%%%%%%%%%%%%%%%%%%%%%%%%%%%%%%%%%%%%%%%%%%%%%%%%%%%%%%%%%%%%%%%%%%%%%%%%%%%%

\subsubsection{Proof of Corollary \ref{cor:cpt-subgauss}}
\label{sec:cpt-subgauss-proof}
\begin{proof}
Let $n\geq 1$, $\epsilon>0$ and $\tau_{n}$ be chosen as in the corollary. From Lemma \ref{cpttt2lem}, we know that CPT is \ref{type:two}, with the parameters
\[ L_1 = (K^+ + K^-)L, L_2 = LK^+,L_3 = LK^-, \gamma= 1-\alpha, \alpha_1=\alpha_2=\alpha_3=\alpha, K_1 = \frac{k^+}{K^+}, \textrm{ and }\]	$ K_2 = \frac{k^-}{K^-}$,
where $\alpha$ and $ L$ are the exponent and H\"{o}lder    constant, respectively,  of the  \holdernosp-continuous weight function in the definition of CPT. 

Let $\epsilon^{\prime}$ be as defined in \eqref{eprimedef} with $\tau=\tau_{n}$ and the other parameters as given above. 
Note that we may rewrite $\tau_{n}$ as $\tau_{n}=[1+(\log n)^{\frac{1}{\beta}}]\max\{K_{1}^{-1},K_{2}^{-1}\}$. As a result, we have   $K_{i}\tau_{n}\geq (\log n)^{\frac{1}{\beta}}$ for $i=1,2$. Using these inequalities and substituting for various parameters in  \eqref{eprimedef}as above,  we get
\begin{align*}
\epsilon'& \geq\epsilon - \frac{LK^+ \exp\left(-\alpha(1-\alpha)\log n\right)}{\left(K_{1}\tau_n\right)^{\beta-1} \alpha(1-\alpha)(\beta-1)}
-\frac{LK^- \exp\left(-\alpha(1-\alpha)\log n\right)}{\left(K_{2}\tau_n\right)^{\beta-1} \alpha(1-\alpha)(\beta-1)}\\
&\ge \epsilon - \frac{L(K^+ + K^-) }{\left(\log n\right)^{\frac{\beta-1}{\beta}} \alpha(1-\alpha)(\beta-1)n^{\alpha(1-\alpha)}}=\epsilon-c_{3}(n).
\end{align*}
Since  $\epsilon>c_{3}(n)$, we have  $\epsilon^{\prime}>0$, and Theorem \ref{thm:t2conc-subGauss} applies. Using the inequality $\epsilon^{\prime}\geq \epsilon-c_{3}(n)$ in \eqref{cptbd1} and substituting for $L_{1}$, $\gamma$, $\alpha_{1}$ and $\tau$ now  yields  the required bound  in a straightforward fashion.
\end{proof}

\subsubsection{Proof of Proposition \ref{cor:cpt-subgauss-const}}
\label{sec:cpt-subgauss-const-proof}
\begin{proof}
Let $n\geq 1$, $\epsilon>0$ and $\tau_{n}$ be chosen as in the proposition. From Lemma \ref{cpttt2lem}, we know that CPT is \ref{type:two}, with the parameters
\[ L_1 = (K^+ + K^-)L, L_2 = LK^+,L_3 = LK^-, \gamma= 1-\alpha, \alpha_1=\alpha_2=\alpha_3=\alpha, K_1 = \frac{k^+}{K^+}, \textrm{ and }\]	$ K_2 = \frac{k^-}{K^-}$,
where $\alpha$ and $ L$ are the exponent and H\"{o}lder    constant, respectively,  of the  \holdernosp-continuous weight function in the definition of CPT. 

Note that $X$ satisfies \ref{ass:c1} with $\beta=2$. Following the steps leading to \eqref{thm38ineq1} and the equality in \eqref{thm38ineq2} in the proof of Theorem \ref{thm:t2conc-subGauss}, we conclude that 
\begin{align}
\Prob{|C_{n}-C(X)|>\epsilon}\leq \Prob{W_1(F,F_n) > \left(\frac{\epsilon'}{L_1 \tau_{n}^{\gamma}}\right)^{\frac{1}{\alpha_1}}},
\label{prop40ineq1}
\end{align}
where $\epsilon^{\prime }$ is as defined in \eqref{intereqn} with $\tau=\tau_{n}$, $\beta=2$ and the rest of the parameters as defined above.

As in the proof of Corollary \ref{cor:cpt-subgauss}, we can show that $\epsilon^{\prime}\geq \epsilon-c_{3}(n)$. As a result,  \eqref{prop40ineq1} yields 
\begin{align}
\Prob{|C_{n}-C(X)|>\epsilon}\leq \Prob{W_1(F,F_n) > \left(\frac{\epsilon-c_{3}(n)}{L_1 \tau_{n}^{\gamma}}\right)^{\frac{1}{\alpha_1}}}.
\label{prop40ineq2}
\end{align}
On substituting for $L_{1}, \tau_{n}$ and $\gamma$, we also recognise that $L_{1}\tau_{n}^{\gamma}=c_{4}(n)$. The condition on $\epsilon$ given in the proposition implies that the assumptions of Lemma    \ref{lemma:wasserstein-dist-bound-subgauss} hold with $\epsilon$ in that lemma replaced by $[(\epsilon-c_{3}(n))/c_{4}(n)]^{\frac{1}{\alpha}}$. Applying Lemma \ref{lemma:wasserstein-dist-bound-subgauss}  to bound the right hand side in \eqref{prop40ineq2} yields the final result. 
\end{proof}
%%%%%%%%%%%%%%%%%%%%%%%%%%%%%%%%%%%%%%%%%%%%%%%%%%%%%%%%%%%%%%%%%%%%%%%%%%%%%%%%%%%%%%%%%%%%%%%%%%%%%%%%%%%%%%%%%%%%%%%%%%%%%%%%%%
%%%%%%%%%%%%%%%%%%%%%%%%%%%%%%%%%%%%%%%%%%%%%%%%%%%%%%%%%%%%%%%%%%%%%%%%%%%%%%%%%%%%%%%%%%%%%%%%%%%%%%%%%%%%%%%%%%%%%%%%%%%%%%%%%%

\subsection{Proofs of the claims in Section \ref{sec:subexp}}
\label{sec:subexp-proofs}

\subsubsection{Proof of Theorem \ref{thm:t1-conc-bd-subexp}}
\label{sec:t1-conc-bd-subexp-proof}

\begin{proof}
	The proof is identical to  the proof of Theorem \ref{thm:t1-conc-bd} except that  Lemma \ref{lemma:wasserstein-dist-bound-subexp} is used in place of  Lemma \ref{lemma:wasserstein-dist-bound} there. 
\end{proof}

\subsubsection{Proof of Theorem \ref{thm:t2conc-subExp}}
\label{sec:t2conc-subExp-proof}
\begin{proof}
	The proof follows the same development as in the proof of Theorem \ref{thm:t2conc-subGauss}, except that the sub-exponential tail bound \eqref{subexptail} is invoked instead of the sub-Gaussian tail bound \eqref{eq:subgauss-1} for bounding the integrals $I_{2}$ and $I_{3}$ in \eqref{eq:t2conc1}. More precisely, for $z>0$, the sub-exponential tail bound \eqref{subexptail} gives 
	\begin{align}
	    1-F(z)=\Prob{X>z}\leq \exp(-cz).\label{eq:subexptailbd}
	\end{align}
	Using the bound allows us to verify through direct integration that \[I_{2}\leq \frac{L_{2}}{c\alpha_{2}}\exp(-\alpha_{2}cK_{1}\tau).\] 
	Similarly, for $z<0$, the tail bound \eqref{subexptail} gives \[F(z)=\Prob{X<z}\leq \exp(cz),\] 
	which yields 
	\[I_{3}\leq \frac{L_{3}}{c\alpha_{3}}\exp(-\alpha_{3}cK_{2}\tau).\] 
	These bounds on $I_{2}$ and $I_{3}$ lead to \eqref{thm38ineq1} with 
 $\epsilon^{\prime}$ defined as in Theorem \ref{thm:t2conc-subExp}. The equality in \eqref{thm38ineq2} immediately follows. Applying Lemma \ref{lemma:wasserstein-dist-bound-subexp} to bound the probability in \eqref{thm38ineq2} completes the proof. 
\end{proof}

\subsubsection{Proof of Corollary \ref{cor:cpt-subexp}}
\label{sec:cpt-subExp-proof}
\begin{proof}
Let $n\geq 1$, $\epsilon>0$ and $\tau_{n}$ be chosen as in the corollary. From Lemma \ref{cpttt2lem}, we know that CPT is \ref{type:two}, with the parameters
$L_1 = (K^+ + K^-)L, L_2 = LK^+,L_3 = LK^-, \gamma= 1-\alpha, \alpha_1=\alpha_2=\alpha_3=\alpha, K_1 = \frac{k^+}{K^+}$ and 	$ K_2 = \frac{k^-}{K^-}$,
where $\alpha$ and $ L$ are the exponent and H\"{o}lder    constant, respectively,  of the  \holdernosp-continuous weight function in the definition of CPT. 

Let $\epsilon^{\prime}$ be as defined in Theorem \ref{thm:t2conc-subExp} with $\tau=\tau_{n}$ and the other parameters as given above. 
Note that we may rewrite $\tau_{n}$ as $\tau_{n}=[1+c^{-1}\log n]\max\{K_{1}^{-1},K_{2}^{-1}\}$. As a result, we have   $cK_{i}\tau_{n}\geq \log n$ for $i=1,2$. Using these inequalities and substituting for various parameters in the expression for $\epsilon^{\prime}$ in Theorem \ref{thm:t2conc-subExp},  we get
\begin{align*}
\epsilon'& \geq\epsilon - \frac{LK^+}{c\alpha} \exp\left(-\alpha\log n\right)-
\frac{LK^-}{c\alpha} \exp\left(-\alpha\log n\right)
\ge \epsilon - \frac{L(K^+ + K^-) }{c\alpha n^{\alpha}}.
\end{align*}
It follows from the above inequality and our assumptions on $\epsilon$ that $\epsilon^{\prime}$ satisfies the conditions in Theorem \ref{thm:t2conc-subExp}. Applying Theorem \ref{thm:t2conc-subExp} gives $\Prob{|C_{n}-C(X)|>\epsilon}\leq g(\epsilon^{\prime})$, where $g$ represents the right hand side in \eqref{thm45ineq}. It is a simple matter to verify that $g(\cdot)$ is decreasing in its argument. Consequently, $g(\epsilon^{\prime})\leq g\left(\epsilon-\frac{L(K^+ + K^-) }{c\alpha n^{\alpha}}\right)$.  Substituting $\tau=\tau_{n}$ and  the parameters $L_{1},\gamma$ and $\alpha_{1}$ in the expression for $g$ immediately yields  the required bound.
\end{proof}

%%%%%%%%%%%%%%%%%%%%%%%%%%%%%%%%%%%%%%%%%%%%%%%%%%%%%%%%%%%%%%%%%%%%%%%%%%%%%%%%%%%%%%%%%%%%%%%%%%%%%%%%%%%%%%%%%%%%%%%%%%%%%%%%%%
%%%%%%%%%%%%%%%%%%%%%%%%%%%%%%%%%%%%%%%%%%%%%%%%%%%%%%%%%%%%%%%%%%%%%%%%%%%%%%%%%%%%%%%%%%%%%%%%%%%%%%%%%%%%%%%%%%%%%%%%%%%%%%%%%%

\subsection{Proofs of the claims in Section \ref{sec:heavytailed}}
\label{sec:sec:heavytailed-proofs}

\subsubsection{Proof of Theorem \ref{thm:t1-conc-bd-heavy}}
\label{sec:t1-conc-bd-heavy-proof}
\begin{proof}
	The proof is identical to  the proof of Theorem \ref{thm:t1-conc-bd} except that  Lemma \ref{lemma:wasserstein-dist-bound-heavy} is used in place of  Lemma \ref{lemma:wasserstein-dist-bound} there. 
\end{proof}

%%%%%%%%%%%%%%%%%%%%%%%%%%%%%%%%%%%%%%%%%%%%%%%%%%%%%%%%%%%%%%%%%%%%%%%%%%%%%%%%%
%%%%%%%%%%%%%%%%%%%%%%%%%%%%%%%%%%%%%%%%%%%%%%%%%%%%%%%%%%%%%%%%%%%%%%%%%%%%%%%%%
%%%%%%%%%%%%%%%%%%%%%%%%%%%%%%%%%%%%%%%%%%%%%%%%%%%%%%%%%%%%%%%%%%%%%%%%%%%%%%%%%
%%%%%%%%%%%%%%%%%%%%%%%%%%%%%%%%%%%%%%%%%%%%%%%%%%%%%%%%%%%%%%%%%%%%%%%%%%%%%%%%%
%%%%%%%%%%%%%%%%%%%%%%%%%%%%%%%%%%%%%%%%%%%%%%%%%%%%%%%%%%%%%%%%%%%%%%%%%%%%%%%%%

%%%%%%%%%%%%%%%%%%%%%%%%%%%%%%%%%%%%%%%%%%%%%%%%%%%%%%%%%%%%%%%%%%%%%%%%%%%%%%%%
%%%%%%%%%%%%%%%%%%%%%%%%%%%%%%%%%%%%%%%%%%%%%%%%%%%%%%%%%%%%%%%%%%%%%%%%%%%%%%%%
%%%%%%%%%%%%%%%%%%%%%%%%%%%%%%%%%%%%%%%%%%%%%%%%%%%%%%%%%%%%%%%%%%%%%%%%%%%%%%%%
%%%%%%%%%%%%%%%%%%%%%%%%%%%%%%%%%%%%%%%%%%%%%%%%%%%%%%%%%%%%%%%%%%%%%%%%%%%%%%%%
%%%%%%%%%%%%%%%%%%%%%%%%%%%%%%%%%%%%%%%%%%%%%%%%%%%%%%%%%%%%%%%%%%%%%%%%%%%%%%%%

\section{Proof of Section \ref{sec:bandits}}
\label{sec:appendix-bandits}

\subsection{Proof of Theorem \ref{thm:regret-subgauss}}
\label{sec:appendix-lcb-proof}
\begin{proof}
	The proof follows by using arguments analogous to that in the proof of Theorem 1 in \citep{auer2002finite}. 
	
	Let $1$ denote the optimal arm, without loss of generality. Also, we abuse notation slightly by denoting $\rho(P_{i})$ by $\rho(i)$. 
	First, we bound the number of pulls $T_i(n)$ of any suboptimal arm $i\ne 1$.
	Fix a round $t\in \{1,\ldots,n\}$ and suppose that a sub-optimal arm $i$ is pulled in this round.
	Then, we have 
	\begin{align}
		\rho_{i,T_{i}(t-1)} - w_{i,T_{i}(t-1)} \le   \rho_{1,T_{1}(t-1)} - w_{1,T_{1}(t-1)}.
		\label{eq:badarmpull}
	\end{align}
	The LCB-value of arm $i$ can be larger than that of $1$ {\em only if} one of the following three conditions holds:\\
	(1) $\rho_{1,T_{1}(t-1)}$ is outside the confidence interval, that is,
	\begin{align}
		\rho_{1,T_{1}(t-1)}- w_{1,T_{1}(t-1)} & \ge \rho(1), \label{eq:optarmmeanoutside}
	\end{align}
	(2) $\rho_{i,T_{i}(t-1)}$ is outside the confidence interval, that is, 
	\begin{align}
		\rho_{i,T_{i}(t-1)}+ w_{i,T_{i}(t-1)} & \le \rho(i) , \label{eq:karmmeanoutside}
	\end{align}
	(3) Gap $\Delta_i$ is small:
	If we negate both the two conditions above and use \eqref{eq:badarmpull}, then we obtain
	\begin{align}
		&\rho(i) -  2w_{i,T_{i}(t-1)} \le  \rho_{i,T_{i}(t-1)} -  w_{i,T_{i}(t-1)} \le\rho_{1,T_{1}(t-1)} - w_{1,T_{1}(t-1)} \le \rho(1)\nonumber\\
		&\Rightarrow \quad
		\Delta_i < \, \,2w_{i,T_{i}(t-1)}.\label{eq:cond3}  
	\end{align}
	The last condition is equivalent to the following:
	\begin{align} 
	T_{i}(t-1) <  \dfrac{(32\sqrt{\sigma^2\myexp\log (t)}+512\sigma)^2 (2L)^{\frac{2}{\kappa}}}{\Delta_i^{\frac{2}{\kappa}}}.
	\end{align}
	Let $u = \dfrac{(32\sqrt{\sigma^2\myexp\log n}+512\sigma)^2 (2L)^{\frac{2}{\kappa}}}{\Delta_i^{\frac{2}{\kappa}}} + 1$. When $T_{i}(t-1) \ge  u$, i.e., when the condition in \eqref{eq:cond3} does not hold, then either (i) arm $i$ is not pulled at time $t$, or 
	(ii) \eqref{eq:optarmmeanoutside} or \eqref{eq:karmmeanoutside} occurs.
	Thus, we have  
	\begin{align*}
		T_i(n) &= 1 + \sum_{t=K+1}^n \indic{I_t=i} 
		\le u + \sum_{t=u+1}^n \indic{I_t=i; T_i(t-1) \ge u} \\
		&\le u + \sum_{t=u+1}^n \mathbb{I}\left\{\rho_{i,T_{i}(t-1)}-  w_{i,T_{i}(t-1)}  \le\rho_{1,T_{1}(t-1)}- w_{1,T_{1}(t-1)};\ T_i(t-1) \ge u\right\} \\
		&\le u + \sum_{t=1}^\infty \sum_{s=1}^{t-1}\sum_{s_i=u}^{t-1}\indic{ \rho_{i,s_i} -  w_{i,s_i} \le  \rho_{1,s} - w_{1,s}} \\
		&\le u + \sum_{t=1}^\infty \sum_{s=1}^{t-1}\sum_{s_i=u}^{t-1} 
		\mathbb{I}\left\{
		\left( \rho(1)  < \rho_{1,s}- w_{1,s}\right)  \text{ or }  
		\left( \rho(i)  > \rho_{i,s_i} + w_{i,s_i}\right) 
		\text{ occurs}\right\}.
	\end{align*}
	Using Theorem \ref{thm:t1-conc-bd-subgauss-const}, we can bound the probability of occurrence of each of the two events inside the indicator on the RHS of the final display above as follows:
	\begin{align*} 
		&\Prob{ \rho(1)  < \rho_{1,s}- w_{1,s}} \le \frac{8}{t^{4}}, \text{ and } \\
		&\Prob{ \rho(i)  > \rho_{i,s_i} + w_{i,s_i}} \le \frac8{t^4}.
	\end{align*}
	Plugging the bounds on the events above and taking expectations on the inequality for $T_i(n)$ derived above, we obtain
	\begin{align}
		\E[T_i(n)] &\le u + \sum_{t=1}^\infty \sum_{s=1}^{t-1}\sum_{s_i=u}^{t-1} \frac{16}{t^4} 
		\leq u + 16 \sum_{t=1}^\infty \frac{1}{t^{2}} \le u + \dfrac{8\pi^2}{3}. \label{eq:Tibd}
	\end{align}
	The preceding analysis together with the fact that $\E( R_n) = \sum_{i=1}^K \Delta_i \E[T_i(n)]$ leads to the first regret bound presented in the theorem.
	
	For inferring the second bound on the regret, i.e., the bound that does not scale inversely with the gaps, observe that
		\begin{align}
		\E (R_n) 
		&= \sum_{i} \Delta_i \; \E [T_i(n)] \\
		&= \sum_{i} \left( \Delta_i \; \E [T_i(n)]^{\frac{\kappa}{2}}  \right) \left( \E [T_i(n)]^{1-\frac{\kappa}{2}} \right) \\
		&\leq   \left( \sum_i \Delta_i^{2/\kappa} \; \E [T_i(n)] \right)^{\frac{\kappa}{2}} \left( \sum_i \E [T_i(n)] \right)^{1 - \frac{\kappa}{2}} \label{eq:holderapp} \\
		&\le  \left(
		\sum_{i} \Delta_i^{2/\kappa}  \left(\dfrac{(32\sqrt{\sigma^2\myexp\log n}+512\sigma)^2 (2L)^{\frac{2}{\kappa}}}{\Delta_i^{\frac{2}{\kappa}}} 
		 + 1 + \dfrac{8\pi^2}{3} \right) \right)^{\frac{\kappa}{2}} n^{\frac{2-\kappa}{2}} \label{eq:dxs}\\
		&\le \left(K (32\sqrt{\sigma^2\myexp\log n}+512\sigma)^2 (2L)^{\frac{2}{\kappa}}
		+ K \Delta_i^{2/\kappa} \left(1 + \dfrac{8\pi^2}{3} \right) \right)^{\frac{\kappa}{2}} n^{\frac{2-\kappa}{2}},
	\end{align}
	where the inequality in \eqref{eq:holderapp} follows by applying  H\"{o}lder's inequality with the conjugate exponents $2/\kappa$ and $2/(2-\kappa)$, and the inequality in \eqref{eq:dxs} follows from   \eqref{eq:Tibd} and the fact that  $\sum_{i} \E [T_i(n)] = n$.
\end{proof}

\subsection{Proof of Theorem \ref{thm:cptregret-subgauss}}
\label{sec:appendix-cpt-regret-proof}
\begin{proof}
    The initial passage of the proof of Theorem \ref{thm:regret-subgauss} upto \eqref{eq:cond3} holds even in the case of Risk-LCB with CPT as the risk measure. However, there are deviations in simplifying the condition $\Delta_i < \, \,2w_{i,T_{i}(t-1)}$ in \eqref{eq:cond3}, and we specify this condition for the case of CPT risk measure below. 
    \begin{align}  
 &\Delta_i < L(K^++K^-)\left[\max\left\{\frac{K^{+}}{k^{+}},\frac{K^{-}}{k^{-}}\right\}\left(\sqrt{\log T_i(t-1)}+1\right)\right]^{1-\alpha}\\
	&\qquad\times\left[ \frac{\sigma(32\sqrt{\myexp\log t} + 512)}{\sqrt{T_i(t-1)}}\right]^{{\alpha}}+ \frac{2(K^{+}+K^{-})L}{\alpha(1-\alpha) T_i(t-1)^{\alpha(1-\alpha)}}\\
	&\le \frac{L(K^++K^-)}{T_i(t-1)^{\alpha\min\left\{\frac{1}{2},1-\alpha\right\}}}\left[\max\left\{\frac{K^{+}}{k^{+}},\frac{K^{-}}{k^{-}}\right\}\left(\sqrt{\log T_i(t-1)}+1\right)\right]^{1-\alpha}\\
	&\qquad\times\left[ \sigma(32\sqrt{\myexp\log t} + 512)\right]^{{\alpha}}+ \frac{2(K^{+}+K^{-})L}{\alpha(1-\alpha) }.\label{eq:deltai-cpt-bound}
\end{align}	 
    With $\tilde c_3$ and $\tilde c_4$ as defined in the theorem statement, the inequality in \eqref{eq:deltai-cpt-bound} is equivalent to the following:
    \begin{align}
    %%%%%
    &\left(T_{i}(t-1)\right)^{\min\left\{\frac{1}{2},1-\alpha\right\}} \le\\  &\dfrac{\left( L(K^++K^-) \left[\max\left\{\frac{K^{+}}{k^{+}},\frac{K^{-}}{k^{-}}\right\}\left(\sqrt{\log T_i(t-1)}+1\right)\right]^{1-\alpha} [(32\sqrt{\myexp\log t}+512) \sigma]^{\alpha} + \tilde c_3\right)^{\frac{1}{\alpha}}}{\Delta_i^{\frac{1}{\alpha}}}.\label{eq:cond3again} 
	\end{align}
	Then, if a sub-optimal arm $i$ is pulled at time $t$ and \eqref{eq:cond3} is negated, then  either \eqref{eq:optarmmeanoutside} or \eqref{eq:karmmeanoutside} must hold.
	
	Setting $u=\dfrac{\bigg[ \tilde c_4 [ (32\sqrt{\myexp\log n}+512) \sigma]^{\alpha} + \tilde c_3\bigg]^{\frac{1}{\alpha \min\left\{\frac{1}{2},1-\alpha\right\}}}}{\Delta_i^{\frac{1}{\alpha\min\left\{\frac{1}{2},1-\alpha\right\}}}}+1$, and following the  steps leading to \eqref{eq:Tibd} in the proof of  Theorem \ref{thm:regret-subgauss}, we obtain
	\begin{align}
		\E[T_i(n)] &\le u + \dfrac{8\pi^2}{3}. \label{eq:Tibd-T2}
	\end{align}
	The regret bound in \eqref{eq:cptregretbound} follows by recalling that $\E( R_n) = \sum_{i=1}^K \Delta_i \E[T_i(n)]$. 
	
	Using a technique similar to that employed in the proof of the second bound in Theorem \ref{thm:regret-subgauss}, we now prove the regret bound in \eqref{eq:cptregretbound2} that does not scale inversely with the gaps.
		\begin{align}
		&\E (R_n) = \sum_{i} \Delta_i \; \E [T_i(n)] \\
		&= \sum_{i} \left( \Delta_i \; \E [T_i(n)]^{\alpha \min\left\{\frac{1}{2},1-\alpha\right\}}  \right) \left( \E [T_i(n)]^{1-\alpha \min\left\{\frac{1}{2},1-\alpha\right\}} \right) \\
		&\leq   \left( \sum_i \Delta_i^{\frac{1}{\alpha \min\left\{\frac{1}{2},1-\alpha\right\}}} \; \E [T_i(n)] \right)^{\alpha \min\left\{\frac{1}{2},1-\alpha\right\}} \left( \sum_i \E [T_i(n)] \right)^{1 - \alpha \min\left\{\frac{1}{2},1-\alpha\right\}} \label{eq:holderapp2} \\
		&\le \! \left[
		\sum_{i} \Delta_i^{\frac{1}{\alpha \min\left\{\frac{1}{2},1-\alpha\right\}}}  \!\left[\dfrac{\bigg[ \tilde c_4 [ (32\sqrt{\myexp\log n}+512) \sigma]^{\alpha} + \tilde c_3\bigg]^{\frac{1}{\alpha \min\left\{\frac{1}{2},1-\alpha\right\}}}}{\Delta_i^{\frac{1}{\alpha\min\left\{\frac{1}{2},1-\alpha\right\}}}} 
		 \!+ 1 \!+ \dfrac{8\pi^2}{3} \right] \right]^{\alpha \min\left\{\frac{1}{2},1-\alpha\right\}} \\
		 &\qquad\times n^{1 - \alpha \min\left\{\frac{1}{2},1-\alpha\right\}} \label{eq:dxs1}\\
		&\le \!\left[K \bigg[ \tilde c_4 [ (32\sqrt{\myexp\log n}+512) \sigma]^{\alpha} + \tilde c_3\bigg]^{\frac{1}{\alpha \min\left\{\frac{1}{2},1-\alpha\right\}}}
		+ K \Delta_i^{\frac{1}{\alpha \min\left\{\frac{1}{2},1-\alpha\right\}}} \!\left[1 \!+ \dfrac{8\pi^2}{3} \right] \right]^{\alpha \min\left\{\frac{1}{2},1-\alpha\right\}} \\
		&\qquad\times n^{1 - \alpha \min\left\{\frac{1}{2},1-\alpha\right\}},
	\end{align}
	where the inequality in \eqref{eq:holderapp2} follows by applying  H\"{o}lder's inequality with the conjugate exponents $\frac{1}{\alpha \min\left\{\frac{1}{2},1-\alpha\right\}}$ and $\frac{1}{1 - \alpha \min\left\{\frac{1}{2},1-\alpha\right\}}$, and the inequality in \eqref{eq:dxs1} follows from   \eqref{eq:Tibd-T2} and the fact that  $\sum_{i} \E [T_i(n)] = n$.
\end{proof}
	%%%%%%%%%%%%%%%%%%%%%%%%%%%%%%%%%%%%%%%%%%%%%%%%%%%%%%%%%%%%%%%%%%%%% 
	%%%%%%%%%%%%%%%%%%%%%%%%%%%%%%%%%%%%%%%%%%%%%%%%%%%%%%%%%%%%%%%%%%%%% 
	%%%%%%%%%%%%%%%%%%%%%%%%%%%%%%%%%%%%%%%%%%%%%%%%%%%%%%%%%%%%%%%%%%%%% 
	\section{Conclusions}
	\label{sec:conclusions}
	We presented a unified approach to derive concentration bounds for empirical estimates of risk measures. Our approach for deriving concentration bounds involves relating the estimation error to the Wasserstein distance between the true CDF and the  EDF formed from an i.i.d. sample, and then applying recent concentration bounds for the latter. This approach yields concentration bounds for two general categories of risk measures  and two estimation schemes, for a class of distributions which includes sub-Gaussian, sub-exponential, and heavy-tailed distributions. The two categories of risk measures covered by our results contain well known risk measures such as OCE (with CVaR as a special case), spectral risk measures, UBSR, CPT value and RDEU as special cases, while the estimators given in the literature for these risk measures form specific examples of the two estimation schemes covered by our results. Our bounds extend and, in some cases, improve existing bounds for specific risk measures. More importantly, our unified approach contrasts with the case-by-case approaches tried in the literature. We illustrate the usefulness of our bounds by providing an algorithm and the corresponding regret bounds for a stochastic bandit problem involving a risk measure of each category.

\bibliography{refs}
\bibliographystyle{plainnat}

\end{document}